\theoremstyle{plain}
   \newtheorem{main}{Main Theorem}
   \newtheorem{theorem}{Theorem}[section]
   \newtheorem{proposition}[theorem]{Proposition}
   \newtheorem{prop}[theorem]{Proposition}
    \newtheorem{cor}[theorem]{Corollary}
   \newtheorem{conjecture}[theorem]{Conjecture}
   \newtheorem*{theorem*}{Theorem}
   \newtheorem{propdef}[theorem]{Proposition/Definition}
\theoremstyle{definition}
   \newtheorem{remark}[theorem]{Remark}
\numberwithin{equation}{section}
\newcommand\qbin[3]{\left[\begin{matrix} #1 \\ #2 \end{matrix} \right]_{#3}}
\newcommand\hook[2]{{\langle #2-#1, 1^{#1} \rangle}}
\newcommand\Symm{\mathfrak{S}}
\newcommand{\vep}{\varepsilon}
\newcommand\one{\mathbf{1}}
\newcommand\Par{\operatorname{Par}}
\newcommand\Irr{\operatorname{Irr}}
\newcommand\wt{\widetilde}
\newcommand{\wh}{\widehat}
\newcommand\fix{\operatorname{fix}}
\newcommand\rk{\operatorname{rk}}
\newcommand{\Fac}{\mathcal{F}}
\newcommand{\Grfn}{\mathcal{Q}}
\newcommand{\defn}[1]{\textbf{#1}}
\newcommand\TTT{{\mathcal{T}}}
\newcommand\llambda{{\vec{\lambda}}}
\newcommand\CC{{\mathbb{C}}}
\newcommand\FF{{\mathbb{F}}}
\newcommand\GG{{\mathbf{G}}}
\newcommand{\KK}{\mathbb{K}}
\newcommand\GL{{\mathrm{GL}}}
\newcommand{\GLnFq}{\GL_n(q)}
\newcommand\SL{{\mathrm{SL}}}
\newcommand{\SLnFq}{\SL_n(q)}
\newcommand\GU{{\mathrm{GU}}}
\newcommand{\GUnFq}{\GU_n(q)}
\newcommand\SU{{\mathrm{SU}}}
\newcommand{\SUnFq}{\SU_n(q)}
\newcommand{\GLU}{\mathrm{GL}^\vep}
\newcommand{\SLU}{\mathrm{SL}^\vep}
\newcommand{\Frob}{F_\vep}
\newcommand{\rootorbs}{\Phi_\vep}
\newcommand{\charorbs}{\wh{\Phi}_\vep}
\newcommand{\Fq}{\FF_q}
\newcommand{\op}{\operatorname}
\newcommand{\ol}{\overline}
\newcommand{\bmat}[1]{\begin{bmatrix} #1 \end{bmatrix}}
\newcommand{\im}{\op{im}}
\newcommand{\mov}{\op{mov}}
\newcommand{\res}{\big|}
\title[Counting factorizations of Singer cycles]{Counting factorizations of Singer cycles in linear and unitary groups}
\author{Joel Brewster Lewis}
\author{C. Ryan Vinroot}
\begin{document}

\begin{abstract}
We count factorizations of Singer cycles as products of reflections in the families of special and general unitary and linear groups over a finite field.  In the case of minimum-length factorizations, the resulting answer is a striking product formula resembling the count for minimum-length factorizations of Coxeter elements into reflections in complex reflections groups.  Moreover, for minimum length, the answers for the unitary and linear groups exhibit the phenomenon of Ennola duality, where the number of factorizations in a unitary group over the field $\Fq$ is given by replacing `$q$' with `$-q$' in the corresponding answer for a linear group.  We use the character theory of these groups to make this count, and in particular we employ the Deligne--Lusztig theory of characters for finite reductive groups.
\end{abstract}

\maketitle

\tableofcontents

This paper is motivated by the following results in the field of Coxeter--Catalan combinatorics.
\begin{theorem*}[\cite{Hurwitz, Denes, Chapoton, BessisKpi1}]
    The number of factorizations of an $n$-cycle in the symmetric group $\Symm_n$ as a product of the minimum number $n - 1$ of transpositions is $n^{n - 2}$.
    More generally, the number of factorizations of a Coxeter element $c$ in an irreducible finite real or complex reflection group $W$ of rank $N$ as a product of the minimum number $N$ of reflections is $\dfrac{N! \cdot h^N}{|W|}$, where $h = |c|$ is the order of $c$.
\end{theorem*}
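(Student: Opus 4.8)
The plan is to reduce the enumeration to a character sum via the Frobenius class‑algebra formula and then evaluate it --- this is precisely the strategy that the body of the present paper adapts to Singer cycles. I would treat the symmetric‑group statement by hand, and reduce the general reflection‑group statement to the Shephard--Todd classification together with Bessis's geometry.

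\emph{Step 1 (the Frobenius formula for $\Symm_n$).} In $\Symm_n$ the reflections are exactly the $\binom{n}{2}$ transpositions, which form a single conjugacy class $T$; fix a transposition $\tau$ and an $n$-cycle $c$. The classical Frobenius formula for counting solutions of $t_1\cdots t_{n-1}=c$ with each $t_i\in T$ then gives
\[
  \#\{(t_1,\dots,t_{n-1})\in T^{n-1}:\ t_1\cdots t_{n-1}=c\}
  =\frac{|T|^{\,n-1}}{n!}\sum_{\chi\in\Irr(\Symm_n)}\frac{\chi(\tau)^{\,n-1}\,\overline{\chi(c)}}{\chi(1)^{\,n-2}},
\]
so everything reduces to evaluating this sum over the irreducible characters $\chi^{\lambda}$, $\lambda\vdash n$.

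\emph{Step 2 (evaluation for $\Symm_n$).} By the Murnaghan--Nakayama rule, $\chi^{\lambda}(c)=0$ unless $\lambda$ is a hook $\hook{k}{n}$, in which case $\chi^{\lambda}(c)=(-1)^k$ and $\chi^{\lambda}(1)=\binom{n-1}{k}$. Since the class sum $\sum_{t\in T}t$ acts on $\chi^{\lambda}$ by the scalar $\sum_{(i,j)\in\lambda}(j-i)$ (the sum of contents), I get $\chi^{\lambda}(\tau)/\chi^{\lambda}(1)=\binom{n}{2}^{-1}\sum_{(i,j)\in\lambda}(j-i)$, which for the hook $\hook{k}{n}$ equals $(n-1-2k)/(n-1)$. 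Substituting, the sum collapses to a scalar multiple of $\sum_{k=0}^{n-1}(-1)^k\binom{n-1}{k}(n-1-2k)^{n-1}$, which equals $2^{n-1}(n-1)!$ (read off the coefficient of $t^{n-1}$ in $(e^t-e^{-t})^{n-1}=(2\sinh t)^{n-1}$), and the prefactors then simplify to exactly $n^{n-2}$. (One could instead cite the bijective proof of Dénes via labelled trees or Hurwitz's monodromy argument, but the character‑theoretic route is the one that generalizes.)

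\emph{Step 3 (the general reflection group).} For a general irreducible well‑generated complex reflection group $W$ of rank $N$ the reflections need not form a single conjugacy class, so Step 1 does not apply verbatim. Instead I would invoke the Shephard--Todd classification together with Bessis's theory of the Lyashko--Looijenga covering: writing $d_1\le\dots\le d_N=h$ for the degrees of $W$, the associated Lyashko--Looijenga map is a quasi‑homogeneous finite morphism, so a weight computation shows its degree is $N!\,h^N/(d_1\cdots d_N)$, while Bessis identifies this same degree with the number of length‑$N$ reflection factorizations of a fixed Coxeter element $c$ (equivalently, with the maximal chains in the interval below $c$ in the reflection‑length order). Since $|W|=d_1\cdots d_N$, this yields the asserted value $N!\,h^N/|W|$. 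Historically the formula was first obtained case‑by‑case --- uniformly for the Weyl groups, by explicit wreath‑product character computations for the infinite families $G(d,1,n)$ and $G(d,d,n)$, and by direct verification for the finitely many exceptional types --- and I would fall back on this if the geometric route stalled.

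\emph{The main obstacle.} The genuinely hard part is Step 3: there is no known classification‑free proof, and the geometric argument rests on the nontrivial input that the Lyashko--Looijenga map is finite of the predicted degree for \emph{every} well‑generated $W$, a fact whose proof itself passes through the classification for the exceptional groups. Even in the elementary $\Symm_n$ case, the crux is recognizing the content‑sum eigenvalue and the exponential generating function that make the alternating binomial sum telescope to $2^{n-1}(n-1)!$.
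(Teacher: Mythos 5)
Your proposal is essentially correct, but note that the paper itself offers no proof of this statement: it is quoted as background, with the result attributed to Hurwitz, D\'enes, Chapoton and Bessis, so there is no internal argument to compare against. Your Steps 1--2 do give a complete and correct proof of the $\Symm_n$ case: the Frobenius formula, the Murnaghan--Nakayama vanishing off hooks, the content-sum evaluation $\chi^{\hook{k}{n}}(\tau)/\chi^{\hook{k}{n}}(1)=(n-1-2k)/(n-1)$, and the identity $\sum_{k}(-1)^k\binom{n-1}{k}(n-1-2k)^{n-1}=2^{n-1}(n-1)!$ all check out and assemble to $n^{n-2}$; this is exactly the strategy the paper adapts to Singer cycles (Proposition~\ref{general factorization prop} plus the hook-sparsity of Proposition~\ref{prop:mostly zero}), so your elementary case is fully in the spirit of the paper. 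For Step 3 you are right that the character route does not apply verbatim and that the uniform statement rests on Bessis's Lyashko--Looijenga degree computation (with the classification entering for the exceptional types) or on the older case-by-case verification of Chapoton's formula; that is precisely the state of the literature the paper is citing, so deferring to it is legitimate rather than a gap in your argument. One small point of hygiene: the blanket phrase ``irreducible finite real or complex reflection group'' should be read with the well-generated convention you adopt, since a Coxeter element (a regular element of order the largest degree $h=d_N$, with reflection length equal to the rank $N$) exists exactly in that setting, and $|W|=d_1\cdots d_N$ then converts the LL degree $N!\,h^N/(d_1\cdots d_N)$ into the stated $N!\,h^N/|W|$.
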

(The first half of the theorem may be recovered from the second half by the substitutions $|\Symm_n| = n!$, $\rk(\Symm_n) = n - 1$, and $h = n$.)
In \cite{LRS}, an analogous theorem was proved for the general linear group over the finite field $\Fq$, which may be viewed as a reflection group in positive characteristic.  In this theorem, the role of the \emph{Coxeter element} is played by a \emph{Singer cycle}---see Section~\ref{sec:background} for definitions of this and other technical terms.
\begin{theorem*}[{\cite[Thm.~1.1]{LRS}}]
  The number of factorizations of a Singer cycle in the general linear group $\GLnFq$ over the finite field $\Fq$ as a product of the minimum number $n$ of reflections is $(q^n - 1)^{n - 1}$.
\end{theorem*}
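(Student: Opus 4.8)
The plan is to use the Frobenius character-sum formula for factorizations together with Deligne--Lusztig theory. Write $G=\GLnFq$, let $R\subseteq G$ be the set of reflections, and fix a Singer cycle $c$. I would first note that $c$ is regular semisimple and has no eigenvalue equal to $1$ (its eigenvalues are the Galois conjugates of a generator of $\FF_{q^n}^\times$), so its fixed space is zero and its reflection length is $n$; hence any expression of $c$ as a product of $n$ reflections is automatically of minimum length, and the quantity to compute is $N:=\#\{(t_1,\dots,t_n)\in R^n : t_1\cdots t_n=c\}$. The Frobenius formula gives
\begin{equation*}
  |G|\cdot N\;=\;\sum_{\chi\in\Irr(G)}\frac{\widehat\chi(R)^n\,\ol{\chi(c)}}{\chi(1)^{n-1}},
\end{equation*}
where $\widehat\chi(R):=\sum_{r\in R}\chi(r)$, so the whole problem reduces to understanding $\chi(1)$, $\chi(c)$, and $\widehat\chi(R)$ for each $\chi$ and then evaluating this sum.

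The key reduction is that the centralizer of $c$ is the Coxeter (Singer) torus $T\cong\FF_{q^n}^\times$, of which $c$ is a generator, and $c$ is regular semisimple in $T$; so by the Deligne--Lusztig character formula, $\chi(c)=0$ unless $\chi$ is a constituent of $R_T^\theta$ for some $\theta\in\wh T$. For $G=\GLnFq$ these characters are explicit once one organizes the $\theta$ by their $\FF_q$-degree $d\mid n$: up to a global sign, $R_T^\theta=\sum_{j=0}^{n/d-1}(-1)^j\chi_{j,\theta}$, where $\chi_{j,\theta}$ glues (via Jordan decomposition) the semisimple class of $\theta$ to the hook unipotent character $\hook{j}{n/d}$ of $\GL_{n/d}(q^d)$; one has $\chi_{j,\theta}(c)=\pm\sum_{i=0}^{d-1}\theta(c^{q^i})$ with sign depending only on $j$; and for $\theta=1$ these are the hook unipotent characters $\chi_{\hook{k}{n}}$ of $\GLnFq$, with $\chi_{\hook{k}{n}}(c)=(-1)^k$ and degrees $e_k:=\chi_{\hook{k}{n}}(1)$ satisfying $\sum_k e_k z^k=\prod_{i=1}^{n-1}(1+q^iz)$. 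For the reflection sums I would use that each reflection has a unique fixed hyperplane, which (grouping reflections by fixed hyperplane) gives $\widehat\chi(R)=\tfrac{q^n-1}{q-1}\bigl(q^{n-1}(q-1)\langle\chi|_P,\mathbf 1_P\rangle-\chi(1)\bigr)$, where $P$ is the pointwise stabilizer of a fixed hyperplane; and $\langle\chi|_P,\mathbf 1_P\rangle=\langle\chi,\Ind_P^G\mathbf 1\rangle$, with $\Ind_P^G\mathbf 1$ the Harish-Chandra induction of $\mathbb C[\GL_{n-1}(q)]\boxtimes\mathbf 1$ from the Levi $\GL_{n-1}(q)\times\GL_1(q)$. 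From Green's parametrization one reads off that, for $\theta\neq1$, no constituent of $R_T^\theta$ occurs in $\Ind_P^G\mathbf 1$ --- their Green labels avoid the factor $x-1$, which every constituent of $\Ind_P^G\mathbf 1$ carries --- so $\widehat\chi(R)=-\tfrac{q^n-1}{q-1}\chi(1)$ there, while the Pieri rule gives $\langle\chi_{\hook{k}{n}}|_P,\mathbf 1_P\rangle=e_k/q^k$ and hence $\widehat{\chi_{\hook{k}{n}}}(R)=\tfrac{q^n-1}{q-1}\,e_k\,(q^{n-k}-q^{n-1-k}-1)$.

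The last step is to assemble the sum. By column orthogonality $\sum_{\chi\in\Irr(G)}\chi(1)\ol{\chi(c)}=0$ (as $c\neq1$), and the $\theta=1$ part of that sum is $\sum_k(-1)^ke_k=\prod_{i=1}^{n-1}(1-q^i)=(-1)^{n-1}D$ with $D:=\prod_{i=1}^{n-1}(q^i-1)$; since $\widehat\chi(R)=-\tfrac{q^n-1}{q-1}\chi(1)$ whenever $\theta\neq1$, the total contribution of the characters with $\theta\neq1$ is $\bigl(-\tfrac{q^n-1}{q-1}\bigr)^n\sum_{\chi:\theta\neq1}\chi(1)\ol{\chi(c)}=D\bigl(\tfrac{q^n-1}{q-1}\bigr)^n$. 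The contribution of the characters with $\theta=1$ is $\bigl(\tfrac{q^n-1}{q-1}\bigr)^n\sum_k(-1)^ke_k\bigl(q^{n-1-k}(q-1)-1\bigr)^n$. Thus the Frobenius sum equals $\bigl(\tfrac{q^n-1}{q-1}\bigr)^n\bigl[D+\sum_k(-1)^ke_k\bigl(q^{n-1-k}(q-1)-1\bigr)^n\bigr]$; expanding the summand by the binomial theorem and using $\sum_k e_kz^k=\prod_{i=1}^{n-1}(1+q^iz)$, the $j$-th binomial term contributes a factor $\prod_{i=1}^{n-1}(1-q^{i-j})$, which vanishes for $1\le j\le n-1$, so only $j=0$ and $j=n$ survive and give $-D$ and $q^{\binom n2}(q-1)^nD$. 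Hence the bracket equals $q^{\binom n2}(q-1)^nD$, the Frobenius sum is $q^{\binom n2}(q^n-1)^nD$, and dividing by $|G|=q^{\binom n2}D(q^n-1)$ yields $N=(q^n-1)^{n-1}$.

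The main obstacle will be the middle step: pinning down the characters that do not vanish at $c$ and, above all, computing their reflection sums $\widehat\chi(R)$ --- equivalently the multiplicities $\langle\chi,\Ind_P^G\mathbf 1\rangle$ --- which requires genuine work with Harish-Chandra induction, the Jordan decomposition of characters of $\GLnFq$, and branching rules, rather than just citing standard facts. By contrast, the final collapse to $(q^n-1)^{n-1}$ is a short $q$-binomial computation: the only ``miracle'' is that all the intermediate binomial terms vanish because $\prod_{i=1}^{n-1}(1-q^{i-j})$ has a zero factor for $1\le j\le n-1$.
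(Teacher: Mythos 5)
Your proposal is correct, and I checked its load-bearing claims: the hyperplane-grouping identity $\widehat\chi(R)=\frac{q^n-1}{q-1}\bigl(q^{n-1}(q-1)\langle\chi|_P,\one_P\rangle-\chi(1)\bigr)$, the identification of $\Ind_P^G\one$ with Harish-Chandra induction of the regular character of $\GL_{n-1}(q)$ times $\one_{\GL_1}$, the Pieri computation $\langle\chi^{\hook{k}{n}}|_P,\one_P\rangle=q^{\binom{k}{2}}\qbin{n-1}{k}{q}=e_k/q^k$ (this is exactly the $q$-Pascal identity), the vanishing $\langle\chi,\Ind_P^G\one\rangle=0$ for primary characters supported away from the trivial orbit, the values $\chi^{\hook{k}{n}}(c)=(-1)^k$, and the final collapse via $\sum_k e_kz^k=\prod_{i=1}^{n-1}(1+q^iz)$; the arithmetic at the end does give $(q^n-1)^{n-1}$. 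However, your route differs from the paper's in the middle steps. The paper does not reprove this statement (it quotes it from LRS), but its own machinery, which recovers it as the $\vep=+$, $X=\Fq^\times$ case of Main Theorem~\ref{main:short}, computes reflection character sums by splitting into transvections and semisimple reflections and evaluating each through the Lusztig induction formula \eqref{eq:induction} and Green-function/Green-polynomial evaluations (Corollary~\ref{cor:transvections}, Proposition~\ref{prop:semisimple}), and it handles the Singer-cycle values for every orbit size $d\mid n$ explicitly, via root-of-unity sums and M\"obius inversion (Proposition~\ref{prop:Singer sum}). You instead compute the total reflection sum in one stroke through $\langle\chi,\Ind_P^G\one\rangle$ and the branching (Pieri) rule, and you dispatch all characters with $\theta\neq\one$ at once by column orthogonality, needing only that their Green labels omit the trivial orbit -- so you never need the individual values $\chi^{\phi,\lambda}(c)$ for $d>1$, nor the transvection/semisimple dichotomy. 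What your approach buys is a shorter, more classical argument using only Green's parameterization and Harish-Chandra combinatorics of $\GLnFq$; what the paper's approach buys is uniformity in $\vep$, since the Green-function route is exactly what feeds the Ennola duality needed for the unitary groups, where your Harish-Chandra/hyperplane-stabilizer argument does not transfer as stated (the parabolic induction structure is different there). One point to make explicit if you write this up: the sparsity statement you invoke (that $\chi(c)=0$ unless $\chi$ is a primary hook character, i.e.\ a constituent of some $R_T^\theta$ for the Coxeter torus) is precisely Proposition~\ref{prop:mostly zero}, and it is what legitimizes extending the column-orthogonality sum over all of $\Irr(G)$.
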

The first main result of this paper is to extend this theorem to a larger family of reflection groups over finite fields, namely, to the families of linear and unitary groups over $\Fq$ that contain a Singer cycle.  
\begin{main}\label{main:short}
    Let $G$ be one of the groups for which $\SLnFq \leq G \leq \GLnFq$ or $\SU_n(q) \leq G \leq \GU_n(q)$ such that $G$ contains a Singer cycle $c$.  Then the number of factorizations of $c$ as a product of the minimum number $n$ of reflections in $G$ is $h^{n - 1}$, where $h = |c|$ is the order of $c$.
\end{main}

In the representation theory and combinatorics of the finite unitary group, the phenomenon of \emph{Ennola duality} occurs when the answer to a question about $\GU_n(q)$ can be obtained by substitution of `$-q$' for `$q$' into the answer to the corresponding question for $\GLnFq$.  Main Theorem~\ref{main:short} is an instance of this phenomenon.
\begin{main}\label{main:ennola}
Fix a positive integer $n$ for which both $\GLnFq$ and $\GU_n(q)$ contain a Singer cycle.  There exists a polynomial $P = P_{n}$ with the following property: for each prime power $q$, the number of factorizations of the Singer cycle in $\GLnFq$ as a product of $n$ reflections is $P(q)$, and the number of factorizations of the Singer cycle in $\GU_n(q)$ as a product of $n$ reflections is $P(-q)$.
The same statement is true if we replace $\GLnFq$ and $\GU_n(q)$ respectively with $\SLnFq$ and $\SU_n(q)$.
\end{main}

As in \cite{LRS}, we proceed by algebraic means, using a standard technique to reduce the enumerative problem to a calculation of sufficiently many character values of the groups under consideration.  This technique has the virtue that it allows one to enumerate factorizations of a Singer cycle in these groups into any number of reflections.  The statement of these formulae uses the following standard (``$q$-analogue'') notations: for nonnegative integers $k \leq n$, let
\begin{align*}
    [n]_q & := 1 + q + q^2 + \ldots + q^{n - 1}, \\
    [n]!_q & := [n]_q \cdot [n - 1]_q \cdots [2]_q \cdot [1]_q, && \text{and}\\
    \qbin{n}{k}{q} & := \frac{[n]!_q}{[k]!_q \cdot [n - k]!_q}.
\end{align*}
\begin{main}\label{main:longer factorizations}
Let $G$ be one of the groups for which $\SLnFq \leq G \leq \GLnFq$ or $\SU_n(q) \leq G \leq \GU_n(q)$ such that $G$ contains a Singer cycle $c$, and let $X = \{\det(g) \colon g \in G\}$.  Then for $\ell \geq n$, the number of factorizations of $c$ as a product of $\ell$ reflections is
\[
[n]_{\vep q}^{\ell - 1} \sum_{i = 0}^{\ell - n} (-1)^i \binom{\ell}{i} \qbin{\ell - i - 1}{n - 1}{\vep q} |X|^{\ell - i - 1},
\]
where $\vep = 1$ if $G$ is linear and $\vep = -1$ if $G$ is unitary.
\end{main}

Our proofs of these results proceed in two stages: first, we use the Deligne--Lusztig theory of representations of algebraic groups to establish a series of Ennola dualities for relevant character sums, thereby proving Theorem~\ref{main:ennola} (as Theorem~\ref{thm:SLU Ennola} in the case of the special groups, and as Theorem~\ref{thm:|X| > 1} for the others).  Then afterwards, we make use of the earlier, more elementary theory of characters of $\GLnFq$ in order to carry out the explicit computations necessary to prove Theorems~\ref{main:short} and~\ref{main:longer factorizations} (as Theorem~\ref{thm:SL-enumeration-arbitrary-length} in the case of the special groups, and as Corollary~\ref{cor:counting |X| > 1} for the others).  Background for all the needed machinery is provided in Section~\ref{sec:background}, while the proofs of the main results are the subject of Section~\ref{sec:proofs}.  We end in Section~\ref{sec:final remarks} with a number of open questions and other remarks.

\section{Background}\label{sec:background}

\subsection{Finite fields}

Let $q$ be a prime power and fix (once and for all) a finite field $\Fq$ and its algebraic closure $\KK := \ol{\Fq}$.  For each positive integer $d$, there is a unique extension of $\Fq$ of degree $d$ inside $\KK$, which we denote by $\FF_{q^d}$.  We recover the field $\Fq$ as the fixed points $\Fq = \KK^{F_+}$ in $\KK$ of the \defn{standard Frobenius map} $F_+: \KK \to \KK$ defined by $F_+(a) = a^q$, and more generally $\FF_{q^d} = \KK^{F_+^d}$ consists of the fixed points of the $d$th iterate $F_+^d$.  The multiplicative subgroup $\Fq^\times$ of $\Fq$ (respectively, $\FF_{q^d}^\times$ of $\FF_{q^d}$) is cyclic, of order $q - 1$ (resp., $q^d - 1$).

In what follows, we will also consider the \defn{twisted Frobenius map} $F_- : \KK \to \KK$ defined by $F_-(a) = a^{-q}$.  When $d$ is even, we have the equality $\KK^{F_-^d} = \KK^{F_+^d} = \FF_{q^d}$, but when $d$ is odd we have that the fixed points $\KK^{F_-^d}$ of $F_-^d$ in $\KK$ consist of $0$ together with a cyclic group of order $q^d + 1$, consisting of the $(q^d - 1)$th powers in $\FF_{q^{2d}}^\times$.

\subsection{Reflection groups}
\label{sec:reflection groups}

Given a finite-dimensional vector space $V$, a \defn{reflection} in $\GL(V)$ is any non-identity linear transformation that fixes a hyperplane pointwise.\footnote{In the literature---e.g., in \cite{KemperMalle}---these elements are sometimes known as \emph{pseudoreflections}, with the unmodified noun restricted to diagonalizable reflections with determinant $-1$.  This subclass of reflections does not play any special role in the present work, so we opt for the simpler language.  It is also common to require reflections to have finite order; since we will only consider finite groups, this is unnecessary.}  If $t$ is a reflection such that $\det(t) = \alpha \neq 1$, then $t$ is diagonalizable, with matrix \[ \begin{bmatrix} \alpha & & & \\ & 1 & & \\ & & \ddots & \\ & & & 1\end{bmatrix}\] in the basis of eigenvectors.  If $\det(t) = 1$, then we say that $t$ is a \defn{transvection}; its matrix in an appropriate basis is the Jordan block \[ \begin{bmatrix} 1 & 1 & & & \\ & 1 & & & \\ & & 1 & & \\ & & & \ddots & \\ & & & & 1\end{bmatrix}.\]
Say that a subgroup $G$ of $\GL(V)$ is a \defn{reflection group} if $G$ is generated by reflections.  

Over fields of characteristic $0$, the finite reflection groups are fully classified (see, e.g., \cite[Ch.~15]{Kane}).  This is not the case over fields of positive characteristic.\footnote{The classification project for reflection groups over positive characteristic breaks down at the first step: in characteristic $0$, Maschke's theorem implies that each reducible reflection group decomposes as a direct product; however, the proof of Maschke's theorem involves division by the order of the group, so fails in positive characteristic.  Indeed it is easy to find reducible reflection groups in positive characteristic that do not decompose: for example, the cyclic group of matrices over $\Fq$ generated by a single transvection is reducible (it stabilizes the fixed space of the reflection) but it is not a direct product.  The \emph{irreducible} reflection groups over finite fields \emph{have} been fully classified: the (complicated!) classification is the result of work of numerous authors \cite{Kantor-1979, Wagner1, Wagner2, ZS}, and a clear description of it is given in \cite{KemperMalle}.}
Nevertheless, many important families of groups belong to this class, including the following \cite[\S1]{KemperMalle}:
\begin{itemize}
\item the \defn{general linear group} $\GLnFq$ of all $n \times n$ invertible matrices over $\Fq$;
\item its subgroup $\SLnFq$ (the \defn{special linear group}) of invertible matrices of determinant $1$;
\item the \defn{general unitary group} $\GU_n(q) = \{g = (g_{i, j}) \in \GL_n(q^2) : g^{-1} = (g_{j, i}^q)\}$;
\item its subgroup $\SU_n(q) = \{ g \in \GU_n(q) \colon \det(g) = 1\}$ (the \defn{special unitary group}), except in the case $n = 3$ and $q = 2$; and
\item the different flavors of orthogonal and symplectic groups, which preserve a symmetric or alternating form on $V$.
\end{itemize}
Various of these groups belong to the families of \emph{classical groups} or \emph{finite groups of Lie type}.  In what follows, we will be particularly interested in the linear and unitary groups, for which we now introduce some specialized notation, following \cite{SF-V}.

\subsection{Linear and unitary groups}

Fix a positive integer $n$ and a prime power $q$.  For an $n$-dimensional vector space $V$ over $\Fq$, the general linear group $\GL(V)$ of invertible linear transformations on $V$ is isomorphic (by choosing a basis) to the group $\GL^+_n := \GLnFq$.  We make the identification $\GL^+_1 = \Fq^\times$ for the group of determinants of elements of $\GL^+_n$.  The subgroup consisting of matrices with determinant $1$ is the special linear group $\SL^+_n := \SLnFq$.  

Similarly, if $V$ is a vector space over $\FF_{q^2}$ and $\langle \cdot , \cdot \rangle$ is a nondegenerate Hermitian form on $V$, the group of linear transformations on $V$ that preserve $\langle \cdot , \cdot  \rangle$ is isomorphic to the general unitary group $\GL^-_n := \GU_n(q)$.  We make the identification $\GL^-_1 = \{a^{q - 1} : a \in \FF_{q^2}^\times\} = \{ a \in \FF_{q^2} : a^{q + 1} = 1\}$ for the group of determinants of elements of $\GL^-_n$.  The subgroup of $\GL^-_n$ of matrices with determinant $1$ is the special unitary group $\SL^-_n := \SU_n(q)$.

We will denote by $\vep$ an arbitrary sign, i.e., $\vep \in \{+, -\}$, so that $\GLU_n$ denotes either the general linear or general unitary group (depending on $\vep$).  In formulae, we will identify the sign with the associated integer $\pm 1$.  For example, we have that the cardinality of $\GLU_n$ is given by the simple attractive formula
\begin{equation}\label{eq:GLU cardinality}
    |\GLU_n| = q^{\binom{n}{2}} \prod_{i = 1}^n (q^i - \vep^i) = \vep^n \cdot (\vep q)^{\binom{n}{2}} \cdot (\vep q - 1)^n \cdot [n]!_{\vep q},
\end{equation}
where the factors in the first product should be understood as $q^i - 1$ when $\vep = +$ and $q^i - (-1)^i$ when $\vep = -$.

The following result is neither original nor stated in maximal generality; we include it because it is convenient to have an explicit description of the groups $G$ such that $\SLU_n \leq G \leq \GLU_n$.

\begin{propdef}
\label{prop:between SL and GL}
Fix a positive integer $n$, prime power $q$, and sign $\vep$.  For any subgroup $X$ of $\GLU_1$, let $G_X := \{ g \in \GLU_n \colon \det(g) \in X\}$.  The following hold.
\begin{enumerate}[(a)]
\item Except in the case $\vep = -$, $n = 3$, $q = 2$, $X = \{1\}$, the group $G_X$ is a reflection group.
\item Suppose that $G$ is a group such that $\SLU_n \leq G \leq \GLU_n$, and let $X := \{ \det(g) \colon g \in G\}$.  Then $G = G_X$.  In particular, $G$ is a reflection group except in the case $G = \SU_3(2)$.
\end{enumerate}
\end{propdef}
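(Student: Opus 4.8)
The plan is to deduce both statements from the two generation facts recalled in the list above: that $\GLU_n$ is always a reflection group, and that $\SLU_n$ is a reflection group except when $\vep = -$, $n = 3$, $q = 2$ (see \cite{KemperMalle}). The one structural observation that makes the bookkeeping go through is that a reflection of determinant $1$ is precisely a transvection, so the reflections of $\GLU_n$ that happen to lie in $\SLU_n$ are exactly its transvections; restated, the cited facts say that $\SLU_n$ is generated by transvections unless $(\vep, n, q) = (-, 3, 2)$, and that $\GLU_n$ is generated by transvections together with diagonalizable reflections.

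For part (a), I would fix $X \le \GLU_1$, write $R$ for the subgroup of $G_X$ generated by the reflections lying in $G_X$, and aim to show $R = G_X$ outside the stated exception. Since $\det \colon \GLU_n \to \GLU_1$ is onto with kernel $\SLU_n$ and $G_X = \det^{-1}(X)$, it is enough to prove $\SLU_n \le R$ and $\det(R) = X$, as these force $|R| = |\SLU_n| \cdot |X| = |G_X|$ and hence $R = G_X$. Every transvection of $\GLU_n$ is a reflection lying in $G_X$ (its determinant is $1 \in X$), hence lies in $R$. Now split into cases. If $X = \{1\}$ then $G_X = \SLU_n$, which equals $R$ by the cited fact unless $(\vep, n, q) = (-, 3, 2)$ --- the stated exception. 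If $X \neq \{1\}$ and $(\vep, n, q) \neq (-, 3, 2)$, then $\SLU_n$ is generated by its transvections, so $\SLU_n \le R$; choosing a generator $\alpha$ of the cyclic group $X$ and writing matrices in a basis of $V$ (an orthogonal basis for the Hermitian form when $\vep = -$), the diagonalizable reflection $\diag(\alpha, 1, \dots, 1)$ lies in $G_X$ with determinant $\alpha$, so $\det(R) \supseteq \langle \alpha \rangle = X$ and therefore $R = G_X$. Finally, if $X \neq \{1\}$ and $(\vep, n, q) = (-, 3, 2)$, then $\GLU_1$ is cyclic of order $3$ and so has no proper nontrivial subgroup; hence $X = \GLU_1$ and $G_X = \GLU_n = \GU_3(2)$, which is a reflection group by the cited classification.

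For part (b): given $\SLU_n \le G \le \GLU_n$ and $X := \det(G)$, the containment $G \subseteq G_X$ is immediate; conversely, if $g \in G_X$ pick $g' \in G$ with $\det(g') = \det(g)$, so that $g(g')^{-1} \in \ker(\det) = \SLU_n \le G$ and hence $g \in G$. This gives $G = G_X$, and the final assertion is then just part (a): $G = G_X$ is a reflection group unless $(\vep, n, q, X) = (-, 3, 2, \{1\})$, i.e., unless $G = \SU_3(2)$.

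I do not expect any deep obstacle here; the real content lies entirely in the cited classification, and the work is in organizing the case analysis around the $\SU_3(2)$ exception. The point that requires the most care is recognizing that when the bad numerics $(\vep, n, q) = (-, 3, 2)$ occur together with $X \neq \{1\}$ we are looking at $\GU_3(2)$ --- which \emph{is} a reflection group --- rather than at $\SU_3(2)$, and, relatedly, that exhibiting a diagonalizable reflection of prescribed determinant inside the unitary group requires passing to an orthogonal basis for the Hermitian form (which exists since the form is nondegenerate).
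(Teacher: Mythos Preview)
Your proposal is correct and follows essentially the same approach as the paper: both arguments rely on the cited facts that $\SLU_n$ is generated by transvections (outside the $\SU_3(2)$ exception) and that $\GLU_n$ is always a reflection group, then use a diagonal reflection $\diag(\alpha,1,\dots,1)$ to realize any prescribed determinant in $X$, and finally dispose of the $(\vep,n,q)=(-,3,2)$ case by observing that $|\GLU_1|=3$ forces $X=\GLU_1$ when $X\neq\{1\}$. The only cosmetic differences are that the paper treats $n=1$ by a separate sentence and writes each $g\in G_X$ directly as (diagonal reflection)$\cdot$(product of transvections), whereas you argue via $|R|=|\SLU_n|\cdot|X|$; these are equivalent packagings of the same content.
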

\begin{proof}
(a) If $n = 1$, the group $\GLU_n = \GLU_1$ is cyclic, and all non-identity elements are reflections, so the result is trivial.  (This includes the case $\SLU_1 = \{1\}$, generated by the empty set of reflections.)

If $\vep = -$, $n = 3$, $q = 2$, then $\SLU_n = \SU_3(2)$ is \emph{not} generated by reflections (the transvections generate a subgroup of index $4$), but $\GLU_n = \GU_3(2)$ \emph{is} generated by reflections (see \cite[Thms.~1.4 and~1.5]{KemperMalle}).  In this case $\GLU_1$ is cyclic of order $q + 1 = 3$.  Thus its unique proper subgroup is the trivial group, and there are no intermediate subgroups $X$ to consider.

Now suppose that $n \geq 2$ and we are not in the previous case.  Consider an arbitrary element $g \in G_X$.  If $\det(g) = 1$ then $g \in \SLU_n$ and so $g$ is a product of transvections \cite[Thms.~1.5~\&~8.1]{KemperMalle}.  Otherwise, $\det(g) \neq 1$, and so the matrix
\[
t = \bmat{\det(g) &&& \\ &1&& \\ &&\ddots& \\ &&&1}
\]
is a semisimple reflection in $\GLU_n$ such that $\det(t) = \det(g)$.  Thus $t \in G_X$ and $g = t \cdot u$ where $u \in \SLU_n$. Hence in this case $g$ is a product of the reflection $t$ together with some transvections.  In either case, $g$ is a product of reflections in $G_X$.

(b) Choose such a subgroup $G$, and let $X := \{ \det(g) \colon g \in G\}$.  Since the determinant is multiplicative and all elements of $G$ are invertible, $X$ is a subgroup of $\GLU_1$.  By definition of $X$, it follows immediately that $G \subseteq G_X$, where $G_X$ is as in (a).   On the other hand, let $g$ be any element of $G_X$ and let $h$ be an element of $G$ such that $\det(h) = \det(g)$.  Then $u := g h^{-1} \in \SLU_n \subseteq G$, and consequently $g = uh \in G$.  Thus $G_X \subseteq G$, and so in fact $G = G_X$, as claimed.
\end{proof}

\subsection{Irreducible transformations}

Let $V$ be a finite-dimensional vector space over $\Fq$.  Say that a linear transformation $g$ on $V$ is \defn{irreducible} (or \defn{regular elliptic}) if $g$ does not stabilize any nontrivial subspace of $V$.  This property can be characterized in many equivalent ways.  One of these involves the following natural embedding $\FF_{q^n}^\times \hookrightarrow \GLnFq$.

The field $\FF_{q^n}$ is an $n$-dimensional vector space over $\Fq$.  For each element $a$ of $\FF_{q^n}$, the multiplication map $x \mapsto ax$ is $\Fq$-linear.  Consequently, the choice of an $\Fq$-basis for $\FF_{q^n}$ induces an embedding
\begin{equation}
\label{eq:embedding}
\FF_{q^n}^\times \hookrightarrow \GL_{\Fq}(\FF_{q^n}) \cong \GLnFq
\end{equation}
that sends the element $a$ to the matrix of its multiplication map.

\begin{proposition}[{\cite[Prop.~4.4]{LRS}}]
\label{regular-elliptic-definition-proposition}
The following are equivalent for $g$ in $\GLnFq$.
\begin{enumerate}[(i)]
\item There are no nonzero proper $g$-stable $\Fq$-subspaces inside $\Fq^n$.
\item The characteristic 
polynomial $\det(zI_n-g)$ is irreducible in $\Fq[z]$.
\item The element $g$ is the image of some $\beta$ in $\FF_{q^n}^\times$ satisfying
$\Fq(\beta)=\FF_{q^n}$ (that is, a field generator for $\FF_{q^n}$ over $\Fq$) under one of the embeddings 
of \eqref{eq:embedding}.
\end{enumerate}
\end{proposition}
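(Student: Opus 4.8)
The plan is to translate everything into the language of modules over the polynomial ring $\Fq[z]$. Letting $g$ act on $V := \Fq^n$ makes $V$ into a finitely generated torsion $\Fq[z]$-module with $z$ acting as $g$, and --- crucially --- the $g$-stable $\Fq$-subspaces of $V$ are exactly the $\Fq[z]$-submodules of $V$. I would then read all three conditions off the structure theorem for modules over the PID $\Fq[z]$ (equivalently, the elementary divisor / rational canonical form decomposition), together with the standard fact that the minimal polynomial $m_g$ and the characteristic polynomial $\chi_g(z) := \det(zI_n - g)$ share the same irreducible factors and satisfy $m_g \mid \chi_g$.

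For (ii) $\Rightarrow$ (i): if $\chi_g$ is irreducible of degree $n$, then $m_g$ is a nonconstant divisor of the irreducible polynomial $\chi_g$, hence $m_g = \chi_g$; since $\deg m_g = n = \dim_{\Fq} V$, the module $V$ is cyclic, so $V \cong \Fq[z]/(\chi_g)$, which is a field as $\chi_g$ is irreducible. A field has no ideals besides $0$ and itself, so $V$ has no $g$-stable subspace besides $0$ and $V$. For (i) $\Rightarrow$ (ii) I would prove the contrapositive: if $\chi_g$ is reducible, choose an irreducible factor $p$ of degree $< n$. Then $p(g)$ is singular --- over $\KK$ one of its eigenvalues is $p(\lambda) = 0$, where $\lambda$ is a root of $p$ that occurs among the eigenvalues of $g$ --- so $\ker p(g)$ is a nonzero $g$-stable subspace, and it is proper unless $p(g) = 0$; but $p(g) = 0$ forces $m_g = p$, whence $V$ is a vector space over the field $\Fq[z]/(p)$ of dimension $n/\deg p \ge 2$ and any line over that field is a proper nonzero $g$-stable subspace. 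In every case (i) fails.

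For (iii) $\Rightarrow$ (ii): if $g$ is the image of $\beta \in \FF_{q^n}^\times$ with $\Fq(\beta) = \FF_{q^n}$ under an embedding of \eqref{eq:embedding}, then in some $\Fq$-basis $g$ is the matrix of multiplication by $\beta$ on $\Fq(\beta) \cong \Fq[z]/(m_\beta)$, where $m_\beta$ is the minimal polynomial of $\beta$ over $\Fq$; in the power basis this matrix is the companion matrix of $m_\beta$, so $\chi_g = m_\beta$, which is irreducible of degree $[\Fq(\beta) : \Fq] = n$. For (ii) $\Rightarrow$ (iii): if $\chi_g$ is irreducible of degree $n$, pick a root $\beta \in \KK$; then $\Fq(\beta) \cong \Fq[z]/(\chi_g) \cong \FF_{q^n}$, so $\beta$ is a field generator, and $\beta \ne 0$ because $\chi_g(0) = (-1)^n \det(g) \ne 0$. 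By the (ii) $\Rightarrow$ (i) analysis there is an $\Fq$-linear isomorphism $\phi \colon V \to \FF_{q^n}$ with $\phi g = (\text{mult.\ by }\beta)\,\phi$; pushing the standard basis of $V$ forward through $\phi$ yields an $\Fq$-basis of $\FF_{q^n}$ in which multiplication by $\beta$ has matrix exactly $g$, i.e.\ $g$ is the image of $\beta$ under the corresponding embedding \eqref{eq:embedding}.

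The one step I expect to require care is this final matching: showing that the $\Fq[z]$-module isomorphism can be upgraded to a statement about the concrete embeddings in \eqref{eq:embedding} --- rather than merely that $g$ is conjugate to such an image. This is resolved by observing that \eqref{eq:embedding} is itself defined relative to an arbitrary choice of $\Fq$-basis of $\FF_{q^n}$, so there is exactly enough freedom to transport bases along $\phi$. Everything else is routine linear algebra and the structure theory of $\Fq[z]$-modules.
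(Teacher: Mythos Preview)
The paper does not actually prove this proposition: it is quoted verbatim from \cite[Prop.~4.4]{LRS} and used as a black box, so there is no ``paper's own proof'' to compare against.

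Your argument is correct. The $\Fq[z]$-module viewpoint is exactly the right framework, and each implication is handled cleanly. The only point where one might nitpick is the clause ``$p(g)$ is singular --- over $\KK$ one of its eigenvalues is $p(\lambda)=0$'': this is true, but you could also argue purely over $\Fq$ by noting that $p\mid \chi_g$ (since $m_g$ and $\chi_g$ share irreducible factors), whence $\det p(g) = \pm\operatorname{res}(p,\chi_g)=0$, avoiding any appeal to $\KK$. Your handling of the final (ii)$\Rightarrow$(iii) step is also right: the key observation, which you identify correctly, is that the embedding \eqref{eq:embedding} is basis-dependent, so the $\Fq[z]$-module isomorphism $V\cong \FF_{q^n}$ lets you \emph{choose} the basis on the $\FF_{q^n}$ side so that $g$ is literally (not just up to conjugacy) the image of $\beta$.
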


\begin{remark}\label{rem:rcf}
Concretely, given a field generator $\beta$ for $\FF_{q^n}$ over $\Fq$, one can write down one of its images under \eqref{eq:embedding} as follows: let $f(z) \in \Fq[z]$ be its minimal polynomial, necessarily of degree $n$ with roots $\beta, \beta^q, \beta^{q^2}, \ldots, \beta^{q^{n - 1}}$ (a complete $F_{+}$-orbit).  Expand
\[
f(z) = z^n - a_1 z^{n - 1} - \ldots - a_n.
\]
Then $(\beta, \beta^q, \ldots, \beta^{q^{n - 1}})$ is an ordered basis for $\FF_{q^n}$ over $\Fq$, and the image of $\beta$ under \eqref{eq:embedding} for this choice of basis is the \emph{companion matrix} \[
\bmat{
&&&& a_n \\
1 &&&& a_{n - 1} \\
& 1 &&& a_{n - 2} \\
&& \ddots && \vdots \\
&&& 1 & a_1}
\]
of $f$.
\end{remark}

\subsection{Singer cycles in the general linear group}\label{sec:GL Singer cycles}

Say that a linear transformation $c$ is a \defn{Singer cycle} for $\GLnFq$ if it is the image under one of the inclusions \eqref{eq:embedding} of a \emph{cyclic} generator for $\FF_{q^n}^\times$ (not merely a \emph{field} generator of $\FF_{q^n}$ over $\Fq$).  These elements may be characterized in many equivalent ways.
\begin{prop}[{see \cite[\S2.1]{Brookfield}, \cite[Lem.~3]{Gill}
}]
The following are equivalent for an element $c$ of $\GLnFq$.
\begin{enumerate}[(i)]
\item $c$ is a Singer cycle;
\item $c$ is irreducible of maximum possible multiplicative order;
\item $c$ has multiplicative order $q^n - 1$;
\item $c$ acts transitively on $\Fq^n \smallsetminus \{0\}$;
\item $c$ has as an eigenvalue one of the cyclic generators for $\FF_{q^n}^\times$.
\end{enumerate}
\end{prop}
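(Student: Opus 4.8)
The plan is to treat condition~(iii) as a hub and prove (i)$\Rightarrow$(iii), (iii)$\Rightarrow$(i), and the equivalence of (iii) with each of (ii), (iv), (v) separately. The easy directions come first. If $c$ is a Singer cycle, it is the image under \eqref{eq:embedding} of a generator $\beta$ of the cyclic group $\FF_{q^n}^\times$; such a $\beta$ lies in no proper subfield $\FF_{q^d}$ with $d\mid n$, $d<n$ (its order $q^n-1$ does not divide $q^d-1$), so $\beta$ is a field generator, and by Proposition~\ref{regular-elliptic-definition-proposition} $c$ is irreducible with characteristic polynomial equal to the minimal polynomial of $\beta$, whose roots form the $F_+$-orbit $\beta,\beta^q,\ldots,\beta^{q^{n-1}}$. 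Over $\KK$ then $c$ is conjugate to $\diag(\beta,\beta^q,\ldots,\beta^{q^{n-1}})$; each $\beta^{q^i}$ is again a generator of $\FF_{q^n}^\times$, so $|c|=\operatorname{ord}(\beta)=q^n-1$, giving (i)$\Rightarrow$(iii). The same computation shows that \emph{any} irreducible $c$ has order dividing $q^n-1$, with equality exactly when its eigenvalues are cyclic generators. This yields (v)$\Rightarrow$(iii): if $\beta$ is an eigenvalue of $c$ generating $\FF_{q^n}^\times$, then $\beta$ is a root of the degree-$n$ characteristic polynomial of $c$, but $[\Fq(\beta):\Fq]=n$ forces that polynomial to be the irreducible minimal polynomial of $\beta$, so $c$ is irreducible of order $q^n-1$. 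And it gives (ii)$\Leftrightarrow$(iii): among irreducible elements the order always divides $q^n-1$ and the value $q^n-1$ is attained (by any Singer cycle), so ``irreducible of maximal order'' coincides with ``irreducible of order $q^n-1$,'' which by the argument just given makes $c$ a Singer cycle.

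The key step is (iii)$\Rightarrow$(i). First, $|c|=q^n-1$ is coprime to $p=\operatorname{char}\Fq$, so $c$ is semisimple and over $\KK$ equals $\diag(\lambda_1,\ldots,\lambda_n)$ with the multiset $\{\lambda_i\}$ stable under $F_+$. Let $d_1,\ldots,d_k$ be the sizes of the distinct $F_+$-orbits of eigenvalues, so $\sum_j m_j d_j=n$ for multiplicities $m_j\geq 1$, hence $\sum_j d_j\leq n$. Since an eigenvalue in an orbit of size $d$ has order dividing $q^d-1$, we get that $q^n-1=|c|=\operatorname{lcm}_j\operatorname{ord}(\lambda_j)$ divides $\prod_j(q^{d_j}-1)$. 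A crude size estimate shows $\prod_j(q^{d_j}-1)<q^n-1$ whenever $k\geq 2$, which is impossible for a positive multiple of $q^n-1$; and when $k=1$, divisibility of $q^{d_1}-1$ by $q^n-1$ together with $d_1\leq n$ forces $d_1=n$. Thus $c$ has a single $F_+$-orbit of $n$ distinct eigenvalues, its characteristic polynomial is irreducible of degree $n$, $c$ is irreducible (Proposition~\ref{regular-elliptic-definition-proposition}), and since $|c|=q^n-1$ equals the order of its eigenvalue $\beta$, that $\beta$ is a cyclic generator and $c$ is a Singer cycle. This also completes (iii)$\Rightarrow$(v) and (iii)$\Rightarrow$(ii).

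It remains to handle (iv). For (iii)$\Rightarrow$(iv): by the previous step $c$ is a Singer cycle, so (Remark~\ref{rem:rcf}) up to conjugacy $c$ is multiplication by a generator $\beta$ on $\FF_{q^n}\cong\Fq^n$, and multiplication by $\beta$ is transitive on $\FF_{q^n}^\times=\Fq^n\smallsetminus\{0\}$. For (iv)$\Rightarrow$(iii): if $\langle c\rangle$ acts transitively on the $q^n-1$ nonzero vectors, then since $\langle c\rangle$ is abelian all point-stabilizers coincide, so the common stabilizer fixes every nonzero vector and is therefore trivial; orbit--stabilizer then gives $|c|=q^n-1$. Assembling the implications closes the loop of equivalences.

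The only genuinely non-formal ingredient is the arithmetic inside (iii)$\Rightarrow$(i): that an $F_+$-stable eigenvalue multiset whose orders have least common multiple $q^n-1$ must consist of a single $F_+$-orbit of size $n$. The painless route is the estimate $(q^a-1)(q^b-1)=q^{a+b}-q^a-q^b+1<q^{a+b}-1$ for $a,b\geq 1$, which collapses the multi-orbit case with no cyclotomic-polynomial bookkeeping; one could instead argue via the identity $\gcd(q^a-1,q^b-1)=q^{\gcd(a,b)}-1$, but that is fiddlier. Everything else reduces to Proposition~\ref{regular-elliptic-definition-proposition}, Remark~\ref{rem:rcf}, and routine orbit--stabilizer and minimal-polynomial arguments.
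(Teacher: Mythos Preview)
The paper does not give a proof of this proposition, deferring instead to the references in the statement. Your argument is correct and self-contained, using only Proposition~\ref{regular-elliptic-definition-proposition}, Remark~\ref{rem:rcf}, and elementary facts. The one step with real content is (iii)$\Rightarrow$(i), and your product estimate $(q^a-1)(q^b-1)<q^{a+b}-1$ disposes of the multi-orbit eigenvalue case cleanly; this is indeed simpler than routing through $\gcd(q^a-1,q^b-1)=q^{\gcd(a,b)}-1$.

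One minor organizational comment: in the first paragraph you announce (ii)$\Leftrightarrow$(iii), but the direction (iii)$\Rightarrow$(ii) needs the fact that order $q^n-1$ forces irreducibility, which you only establish in the second paragraph via (iii)$\Rightarrow$(i). The logic does close correctly once all pieces are assembled, but it would read more smoothly to defer (iii)$\Rightarrow$(ii) until after the key (iii)$\Rightarrow$(i) step is in hand.
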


It follows from their definitions that every irreducible element in $\GLnFq$ is a power of a Singer cycle.  Moreover, since $\FF_{q^n}$ is generated (as a ring) by any element that does not belong to one of the subfields $\FF_{q^d}$ for $d \mid n$, the power $c^k$ of a Singer cycle $c$ is irreducible as long as $\frac{q^n - 1}{k}$ does not divide $q^d - 1$ for any proper divisor $d$ of $n$.

In the literature, the term \emph{Singer cycle} is sometimes used to refer to a maximal cyclic group that acts irreducibly; we will call these groups \defn{Singer subgroups}.  Each Singer cycle (in our sense) generates such a subgroup.  While Singer \emph{cycles} are not all conjugate, the Singer \emph{subgroups} of $\GLnFq$ are all conjugate; this amounts to changing the choice of basis in \eqref{eq:embedding}.

If $c$ is a Singer cycle for $\GLnFq$ that is the image of $\alpha \in \FF_{q^n}^{\times}$ under the one of the inclusions \eqref{eq:embedding}, then the eigenvalues of $c$ over $\KK$ are precisely $\alpha, \alpha^q, \ldots, \alpha^{q^{n - 1}}$, and the determinant $\det(c) = \alpha^{(q^n - 1)/(q - 1)}$ is a cyclic generator for $\Fq^\times$.

\subsection{Singer cycles in other matrix groups}
\label{sec:Singer cycles in other groups}

Given a subgroup $G \leq \GLnFq$, say that an element $c$ in $G$ is a \defn{Singer cycle} for $G$ if $c$ is irreducible and has maximum multiplicative order among the irreducible elements in $G$.  Since every irreducible element in $\GLnFq$ is a power of a $\GLnFq$-Singer cycle, it follows that if $G$ is a subgroup of $\GLnFq$ that contains a ($G$-)Singer cycle $\wt{c}$, then there is a $\GLnFq$-Singer cycle $c$ such that $\wt{c} = c^k$, where $k$ is the smallest positive integer such that $c^k \in G$.

Not every reflection group over $\Fq$ contains irreducible elements, hence Singer cycles.  The following proposition identifies the cases of interest to us.
\begin{proposition}\label{prop:Singer cycles exist}
For all positive integers $n$ and all prime powers $q$, if $\SLnFq \leq G \leq \GLnFq$ then $G$ contains irreducible elements.

For all odd positive integers $n$ and all prime powers $q$, if $\SUnFq \leq G \leq \GUnFq$ then $G$ contains irreducible elements, with the unique exception of the group $\SU_3(2)$.  For $n$ even, $\GUnFq$ and its subgroups do not contain any irreducible elements.

Moreover, if $\SLU_n \leq G = G_X \leq \GLU_n$ and $G$ contains irreducible elements, then the order of any Singer cycle in $G$ is $|X| \cdot \frac{(\vep q)^n - 1}{\vep q - 1}$ and its determinant is a cyclic generator for $X$.
\end{proposition}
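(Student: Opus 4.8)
The plan is to exhibit in each admissible group $G$ an explicit irreducible element realizing the predicted order, and then to argue separately that no irreducible element of $G$ can have larger order; the linear ($\vep=+$) and unitary ($\vep=-$) cases run in parallel. For $\vep=+$, Section~\ref{sec:GL Singer cycles} already supplies the Singer subgroup $\cong\FF_{q^n}^\times$ of $\GLnFq$ and records that a Singer cycle has determinant a generator of $\Fq^\times=\GLU_1$. For $\vep=-$ with $n$ odd I would build the analogue by hand: view $\FF_{q^{2n}}$ as an $n$-dimensional $\FF_{q^2}$-space and equip it with the form $h(x,y):=\Tr_{\FF_{q^{2n}}/\FF_{q^2}}(x\,y^{q^n})$. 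Because $n$ is odd, $y\mapsto y^{q^n}$ is the unique automorphism of $\FF_{q^{2n}}$ that squares to the identity and restricts to $a\mapsto a^q$ on $\FF_{q^2}$, so $h$ is Hermitian, and it is nondegenerate because the trace form is. The cyclic group $T:=\{a\in\FF_{q^{2n}}^\times : a^{q^n+1}=1\}$, of order $q^n+1$, acts on $\FF_{q^{2n}}$ by multiplication preserving $h$, giving an embedding $T\hookrightarrow\GL_{\FF_{q^2}}(\FF_{q^{2n}})\cong\GUnFq$. In both cases write $c$ for the image of a generator $\zeta$ of the torus in question. By Proposition~\ref{regular-elliptic-definition-proposition}, applied over $\Fq$ (resp. over $\FF_{q^2}$), $c$ is irreducible, since $\zeta$ is a field generator: its order $q^n-1$ (resp. $q^n+1$) exceeds, hence cannot divide, $q^d-1$ (resp. $q^{2d}-1$) for any proper divisor $d\mid n$ (in the unitary case $q^{2d}-1<q^n+1$ because $2d<n$ for proper divisors of the odd integer $n$). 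Finally $\det(c)$ generates $\GLU_1$: for $\vep=+$ this is recorded in Section~\ref{sec:GL Singer cycles}, while for $\vep=-$ one computes $\det(c)=N_{\FF_{q^{2n}}/\FF_{q^2}}(\zeta)=\zeta^{[n]_{q^2}}$, of order $\frac{q^n+1}{\gcd(q^n+1,\,[n]_{q^2})}=q+1$, using $[n]_{q^2}=[n]_q[n]_{-q}$, $q^n+1=(q+1)[n]_{-q}$, and $\gcd(q+1,[n]_q)=1$ (the last because $[n]_q\equiv1\pmod{q+1}$ for odd $n$).

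Now let $G=G_X$ with $\SLU_n\le G\le\GLU_n$ as in Proposition/Definition~\ref{prop:between SL and GL}, and set $k_0:=|\GLU_1|/|X|$, the least positive integer $k$ with $\det(c)^k\in X$. Then $\det(c^{k_0})=\det(c)^{k_0}$ has order exactly $|X|$, hence generates $X$, so $c^{k_0}\in G$; and since $k_0$ divides $|\GLU_1|$, which divides $|c|$, the order of $c^{k_0}$ equals $|c|/k_0=|X|\cdot\frac{(\vep q)^n-1}{\vep q-1}$. By Proposition~\ref{regular-elliptic-definition-proposition} again, $c^{k_0}$ is irreducible exactly when this order fails to divide $q^d-1$ (resp. $q^{2d}-1$) for every proper divisor $d\mid n$; a short comparison of magnitudes shows this holds in all cases except $\vep=-$, $n=3$, $q=2$, $|X|=1$, i.e., except $G=\SU_3(2)$ (where $|X|\cdot\frac{q^3+1}{q+1}=3$ divides $q^2-1=3$). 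This establishes the existence of irreducible elements in all the asserted cases.

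For the matching upper bound, let $g\in G_X$ be an arbitrary irreducible element. Then $g$ has $n$ distinct eigenvalues over $\KK$, so $C_{\GL_n(\KK)}(g)$ is a maximal torus and $T':=C_{\GL_n(\KK)}(g)\cap\GLU_n$ is a maximal torus of $\GLU_n$ with $g\in T'$. Unwinding the definition of $\GLU_n$ — equivalently, using that the characteristic polynomial of $g$, irreducible of the relevant degree, agrees, after applying Frobenius to its coefficients, with that of $g^{-1}$, so that its set of roots is stable under $\mu\mapsto\mu^{-q}$ in the unitary case — one identifies $T'$ as a cyclic group realized by $\FF_{q^n}^\times$ when $\vep=+$ and by $\{a : a^{q^n+1}=1\}$ when $\vep=-$, the latter forcing $n$ odd (which settles the even case), and finds $\det\colon T'\to\GLU_1$ surjective with kernel of order $\frac{(\vep q)^n-1}{\vep q-1}$. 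Hence $g$ lies in the cyclic group $\{h\in T' : \det(h)\in X\}$, of order $|X|\cdot\frac{(\vep q)^n-1}{\vep q-1}$; so $\op{ord}(g)$ divides this quantity, and when equality holds $g$ generates that group, whence $\det(g)$ generates $X$. It follows that $c^{k_0}$ is a Singer cycle of $G$, that every Singer cycle of $G$ has the stated order, and that its determinant generates $X$; and $\SU_3(2)$ genuinely contains no irreducible element, since such an element would lie in the subgroup $\{h\in T':\det(h)=1\}$, which has order $\frac{q^3+1}{q+1}=3$, whereas every element of order dividing $3$ in $\GL_3(\FF_4)$ has all its eigenvalues in $\FF_4$ and hence is not irreducible. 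I expect the main obstacle to be the unitary bookkeeping: constructing the Hermitian form on $\FF_{q^{2n}}$, identifying the torus $T'$ precisely, controlling exactly when $c^{k_0}$ drops into a proper subfield, and isolating $\SU_3(2)$ as the unique exception. The linear case and the order estimates then reduce to elementary divisibility facts about the integers $q^k\pm1$.
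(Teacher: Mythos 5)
Your proposal is correct in substance but takes a noticeably more self-contained route than the paper. The paper's proof leans on two imported facts: Huppert's theorem that $\GUnFq$ has irreducible elements exactly when $n$ is odd, with Singer cycles of order $q^n+1$, and the background observation (Section~\ref{sec:Singer cycles in other groups}) that a $G$-Singer cycle is $c^k$ for a $\GLU_n$-Singer cycle $c$ with $k$ minimal such that $c^k\in G$; its own careful work is then concentrated in the divisibility analysis showing that the $(q-\vep)$th power of a $\GLU_n$-Singer cycle stays irreducible except for $(n,q,\vep)=(3,2,-)$, after which the intermediate groups $G_X$ are handled for free by containment ($\SLU_n\subseteq G_X$) plus the quoted background fact. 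You instead (i) reconstruct the unitary Singer torus by hand, via the Hermitian form $\Tr_{\FF_{q^{2n}}/\FF_{q^2}}(x\,y^{q^n})$ and the order-$(q^n+1)$ subgroup of $\FF_{q^{2n}}^\times$, and compute its determinant as a field norm with a clean gcd argument (where the paper invokes ``elementary number theory''); (ii) prove the order formula for $G_X$-Singer cycles by a direct upper bound: the centralizer of an irreducible $g$ meets $\GLU_n$ in a cyclic torus of order $q^n-\vep^n$ on which $\det$ surjects onto $\GLU_1$ with kernel of order $\frac{(\vep q)^n-1}{\vep q-1}$ — this in effect re-derives Huppert's Satz~4, including the nonexistence for $n$ even, rather than citing it; and (iii) verify directly that $c^{k_0}$ is irreducible for every $X$, whereas the paper only needs the hardest case $X=\{1\}$. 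Your framing of the irreducibility test over $\FF_{q^2}$ (only proper divisors $d\mid n$ with $2d<n$ matter, since $n$ is odd) is actually a bit cleaner than the paper's analysis over $\FF_q$ with divisors of $2n$, which has to rule out $d=n$ and $d=n-1$ by separate estimates. What the paper's route buys is brevity and minimal linear algebra; what yours buys is independence from Huppert and an explicit structural explanation (the torus $T'$ and the determinant map) of why the Singer order in $G_X$ is exactly $|X|\cdot\frac{(\vep q)^n-1}{\vep q-1}$ and why its determinant generates $X$.

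Two steps that you assert but do not carry out deserve emphasis, since they are exactly where the content lives: the ``short comparison of magnitudes'' isolating $\SU_3(2)$ must actually be written (for $n=3$, $d=1$ one needs $|X|(q^2-q+1)\mid q^2-1$ to force $|X|=1$ and $q=2$, and for odd $n\geq 5$ one needs $\frac{q^n+1}{q+1}>q^{2d}-1$ for $2d\leq 2n/3$), and the identification of $T'=C_{\GL_n(\KK)}(g)\cap\GUnFq$ as cyclic of order exactly $q^n+1$ with $\det$ surjective onto $\GLU_1$ needs the eigenvalue-orbit argument (stability of a single $\Frob$-orbit of size $n$ under $\mu\mapsto\mu^{-q}$ forces $n$ odd and the order to divide $q^n+1$) spelled out. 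Both are fillable along the lines you indicate, so I see no genuine gap, only compression at the crux.
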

\begin{proof}
It was shown by Huppert \cite[Satz~4]{Huppert1970} that the unitary group $\GU_n(q)$ contains irreducible elements if and only if $n$ is odd.  In particular, no subgroup of $\GUnFq$ contains irreducible elements when $n$ is even.  Furthermore, Huppert showed that when $n$ is odd, the Singer cycles in $\GU_n(q)$ have order $q^n + 1$, i.e., they are the $(q^n - 1)$th powers of Singer cycles in $\GL_n(q^2)$.  This gives all the desired statements for $\GLU_n$. 

Consider next the special linear/unitary case.\footnote{This is treated by assertion in  \cite{Bereczky}; however, the discussion there contains an oversight in the case $\SU_3(2)$, where the intersection of any Singer subgroup for $\GL_3(2^2)$ with $\SU_3(2)$ consists precisely of the three scalar matrices.} When $n = 1$, we have that $\SLU_1$ is a trivial group, whose unique element (the identity) vacuously meets the definition of irreducible element (because there are no nontrivial subspaces to stabilize). We now consider $n > 1$.

Since the determinant of a $\GL_n(q^2)$-Singer cycle is a cyclic generator for $\GL_1(q^2)$, it follows by elementary number theory that the determinant of a $\GU_n(q)$-Singer cycle is a cyclic generator for $\GU_1(q)$.  Therefore, if $\SLU_n$ contains irreducible elements, then its Singer cycles are the $|\GLU_1|$th powers of the Singer cycles in $\GLU_n$.  

In the linear ($\vep = +$) case for $n \geq 2$, we have $\frac{q^n - 1}{q - 1} > q^{n - 1} - 1$, and so $\frac{q^n - 1}{q - 1}$ is not a divisor of $q^d - 1$ for any $d < n$.  Consequently, the $(q - 1)$th power of a $\GLnFq$-Singer cycle is irreducible, and so is an $\SLnFq$-Singer cycle.

In the unitary ($\vep = -$) case for $n \geq 3$ odd, we consider when the $(q + 1)$th power of a $\GUnFq$-Singer cycle could fail to be irreducible.  So suppose that $\frac{q^n + 1}{q + 1} \mid q^d - 1$ for $d$ a proper divisor of $2n$.  By multiplying out and using the fact that $q \geq 2$, we have that $\frac{q^n + 1}{q + 1} > q^{n - 2} - 1$.  Thus, under our hypothesis, either $d = n$ (which always divides $2n$) or $d = n - 1$ and $n - 1 \mid 2n$.  We have by elementary algebra that
\[
q \cdot \frac{q^n + 1}{q + 1} < q^n - 1 < (q + 1)\cdot \frac{q^n + 1}{q + 1}
\]
for all $n \geq 3$ and $q \geq 2$, so $q^n - 1$ lies between two consecutive multiples of $\tfrac{q^n + 1}{q + 1}$, and the first case does not occur.  In the second case, the condition $n - 1 \mid 2n$ is equivalent to $n - 1 \mid 2$, so (since $n$ is odd) that $n = 3$.  When $n = 3$, we have
\[
1 \cdot \frac{q^3 + 1}{q + 1} \leq 
q^2 - 1 = \frac{q^3 + 1}{q + 1} + (q - 2) < 2 \cdot \frac{q^3 + 1}{q + 1}.
\]
Thus, this case occurs if and only if $n = 3$ and $q = 2$.  Combining the considerations above, we have that for $n \geq 3$ odd and any $q$, the $(q + 1)$th power of a $\GUnFq$-Singer cycle is irreducible, and so is a $\SUnFq$-Singer cycle, except when $(n, q) = (3, 2)$, and that $\SU_3(2)$ does not contain a Singer cycle.

We now consider the intermediate groups $G$ such that $\SLU_n < G < \GLU_n$.  As noted in the proof of Proposition/Definition~\ref{prop:between SL and GL}, in the exceptional case $\vep = -$, $n = 3$, $q = 2$, there are no intermediate groups to consider.  In all other cases, the preceding paragraphs establish that if $n, q, \vep$ are such that $\GLU_n$ contains irreducible elements, then $\SLU_n$ does as well.  Therefore, $G$ contains irreducible elements.  Let $X = \{\det(g) \colon g \in G\}$, so that $G = G_X$ by Proposition/Definition~\ref{prop:between SL and GL}.  Since $\GLU_1$ is cyclic, setting $x = |X|$ and $d = \frac{q - \vep}{x}$, we have $X = \{a \in \GLU_1 \colon a^x = 1\} = \{a^d \colon a \in \GLU_1\}$.  Therefore, the Singer cycles for $G_X$ are all equal to $c^d$ for some $\GLU_n$-Singer cycle $c$, and consequently have multiplicative order $\frac{|\GLU_1|}{d} = x \cdot \frac{(\vep q)^n - 1}{\vep q - 1}$ and determinant $\det(c)^{d}$, a cyclic generator for $X$.  
\end{proof}

\begin{remark}\label{rmk:Singer eigenvalues}
    For $n$ odd, let $c'$ be a Singer cycle in $\GU_n(q)$, and let $c$ be the Singer cycle in $\GL_n(q^2)$ such that $c' = c^{q^n - 1}$.  From the description of the eigenvalues of $c$ in Section~\ref{sec:GL Singer cycles} above, we have that the eigenvalues of $c'$ are $\alpha^{q^n - 1}, \alpha^{q^2(q^n - 1)}, \ldots, \alpha^{q^{2(n - 1)}(q^n - 1)}$ where $\alpha$ is a generator for $\GL_1(q^{2n}) = \FF_{q^{2n}}^\times$.  In this case, $\gamma := \alpha^{q^n - 1}$ is a cyclic generator for $\GU_1(q^n)$, and so (using that $\gamma^{q^{2k}} = \gamma^{(-q)^{2k}}$ for $k < n/2$ and $\gamma^{q^{2k}} = \gamma^{- q^{2k - n}} = \gamma^{(-q)^{2k - n}}$ for $k > n/2$) the eigenvalues of $c'$ are $\gamma, \gamma^{-q}, \gamma^{(-q)^2}, \ldots, \gamma^{(-q)^{n - 1}}$, a complete $F_{-}$-orbit.
\end{remark}

\subsection{Enumeration: the character approach}

In any reflection group, the set of reflections is closed under conjugacy.  There is a standard technique, dating to the 19th century, for enumerating factorizations in a group when the allowable factors have this property.  For a finite group $G$, let $\Irr(G)$ denote the set of its irreducible finite-dimensional complex representations, and for a representation $V$ of $G$ let $\chi^V$ denote the associated character.
\begin{proposition}[Frobenius \cite{frobenius}]
\label{general factorization prop}
Let $G$ be a finite group, and $A_1,\ldots,A_\ell \subseteq G$
unions of conjugacy classes in $G$.  Then for $g$ in $G$,
the number of ordered factorizations $(t_1,\ldots,t_\ell)$ with
$g=t_1 \cdots t_\ell$ and $t_i$ in $A_i$ for $i=1,2,\ldots,\ell$ is
\begin{equation}
\label{frobenius factorization equation}
\frac{1}{|G|} \sum_{V \in \Irr(G)} 
 \deg(V)^{1 - \ell} \chi^V(g^{-1}) \cdot \prod_{i = 1}^\ell \sum_{t \in A_i} \chi^V(t).
\end{equation}
\end{proposition}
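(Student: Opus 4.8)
The plan is to run the classical group-algebra argument. Work inside the group algebra $\CC[G]$, and for each index $i$ form the class sum $\widehat{A_i} := \sum_{t \in A_i} t \in \CC[G]$. The quantity to be computed is exactly the coefficient of $g$ in the product $\widehat{A_1}\widehat{A_2}\cdots\widehat{A_\ell}$: expanding that product and collecting terms, the coefficient of $g$ counts precisely the tuples $(t_1,\dots,t_\ell)$ with $t_i \in A_i$ and $t_1 t_2 \cdots t_\ell = g$.

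First I would record the standard ``Fourier inversion'' identity on $\CC[G]$: for any $x = \sum_{h \in G} x_h\, h \in \CC[G]$ and any $g \in G$,
\[
x_g = \frac{1}{|G|}\sum_{V \in \Irr(G)} \deg(V)\, \chi^V(x g^{-1}),
\]
where $\chi^V$ is extended $\CC$-linearly to $\CC[G]$. This follows from two observations: the left regular representation of $G$ on $\CC[G]$ decomposes as $\bigoplus_{V \in \Irr(G)} V^{\oplus \deg V}$, so the trace of left multiplication by any $y \in \CC[G]$ equals $\sum_{V \in \Irr(G)} \deg(V)\, \chi^V(y)$; on the other hand, a direct computation on the standard basis shows that the trace of left multiplication by $x g^{-1}$ is $|G|$ times the coefficient of the identity in $x g^{-1}$, which is $|G|\, x_g$.

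Next I would invoke the hypothesis that each $A_i$ is a union of conjugacy classes, so that $\widehat{A_i}$ lies in the center of $\CC[G]$. By Schur's lemma it therefore acts on each irreducible $V$ as a scalar $\omega_i^V$, and taking the trace of this action gives $\omega_i^V = \chi^V(\widehat{A_i})/\deg(V) = \frac{1}{\deg V}\sum_{t \in A_i} \chi^V(t)$. Applying the inversion identity with $x = \widehat{A_1}\cdots\widehat{A_\ell}$ and pulling these central scalars out through $\chi^V$ (so that $\chi^V(\widehat{A_1}\cdots\widehat{A_\ell}\, g^{-1}) = \bigl(\prod_i \omega_i^V\bigr)\chi^V(g^{-1})$) yields
\[
\begin{aligned}
\bigl(\widehat{A_1}\cdots\widehat{A_\ell}\bigr)_g
&= \frac{1}{|G|}\sum_{V \in \Irr(G)} \deg(V)\,\chi^V(g^{-1}) \prod_{i=1}^\ell \omega_i^V \\
&= \frac{1}{|G|}\sum_{V \in \Irr(G)} \deg(V)^{1-\ell}\,\chi^V(g^{-1}) \prod_{i=1}^\ell \sum_{t \in A_i} \chi^V(t),
\end{aligned}
\]
which is exactly \eqref{frobenius factorization equation}.

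This statement is entirely classical, so I do not expect a genuine obstacle; the only thing that needs care is the bookkeeping in the inversion identity --- keeping track of the fact that it is the coefficient of $g$ (equivalently $\chi^V(g^{-1})$, not $\chi^V(g)$) that appears, and making sure the appeal to Schur's lemma is applied only to the genuinely central elements $\widehat{A_i}$ and not to $g$ itself. If it turns out to streamline the exposition, one could instead phrase the argument in terms of the column orthogonality relations for the character table rather than the regular representation; I would choose whichever is shorter given the background already in place.
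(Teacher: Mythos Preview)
Your argument is correct and is the standard proof of this classical result. The paper does not actually supply its own proof of this proposition: it is stated with attribution to Frobenius and then immediately used. So there is nothing to compare against, but what you have written is exactly the expected group-algebra/Schur's-lemma argument and would be a perfectly acceptable proof to include.
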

Proposition~\ref{general factorization prop} reduces the enumeration problem we consider to the problem of understanding the character table of our groups sufficiently well.  Moreover, for $G$ such that $\SLU_n \leq G \leq \GLU_n$, it follows from Proposition/Definition~\ref{prop:between SL and GL} that the set of $G$-reflections is actually closed under conjugation \emph{in the larger group $\GLU_n$}; thus it suffices to understand just the character theory of $\GLU_n$.  The next sections are devoted to describing this theory, which is based on the connection between the finite groups $\GLU_n$ and the algebraic group $\GL_n(\KK)$ to which they belong.

\subsection{Levi subgroups, tori, and conjugacy classes}\label{sec:conjugacy classes}

The definitions of the general linear and unitary groups in Section~\ref{sec:reflection groups} may be rephrased in the language of \emph{algebraic groups}: $\GLnFq$ and $\GUnFq$ are respectively the fixed points (or \defn{rational points}) in $\GG := \GL_n(\KK)$ of the entrywise \defn{Frobenius map} $(g_{i, j}) \mapsto (g_{i, j}^q)$ and the \defn{twisted Frobenius map}\footnote{Such maps are called \emph{Steinberg maps} in some sources, e.g., \cite{MalleTesterman}.} $(g_{i, j}) \mapsto (g_{j, i}^q)^{-1}$ (see \cite[1.17]{Carter}).  In an abuse of notation, we denote these maps respectively by $F_+$, $F_-$.

Every algebraic group has a special family of subgroups called \emph{Levi subgroups}.  In the present work, we will not need a general definition of Levi subgroups in the context of algebraic groups, which may be found in \cite[Ch.~1]{DigneMichel}.  Instead, we give a concrete description of the (rational) Levi subgroups of $\GLU_n$.  Up to conjugacy, these are indexed by multisets $\{(d_1, m_1), \ldots, (d_{\ell}, m_\ell)\}$ of pairs of positive integers such that $d_1m_1 + \ldots + d_\ell m_\ell = n$, where one representative of this multiset is a block-diagonal subgroup of $\GLU_n$ isomorphic to
\[
\GL_{m_1}^{\vep^{d_1}}(q^{d_1}) \times \cdots \times \GL_{m_\ell}^{\vep^{d_\ell}}(q^{d_\ell}).
\]
Here the $i$th block (of size $d_im_i \times d_im_i$) comes from embedding $\GL_{m_i}^{\vep^{d_i}}(q^{d_i})$ into $\GLU_{m_id_i}$ via a map generalizing \eqref{eq:embedding} (which can be done while respecting the necessary symmetry in the unitary case).  In other words, in the case of $\GLnFq$, each factor is of the form $\GL_m(q^d)$, while in the case of $\GUnFq$, the factors are either of the form $\GU_m(q^d)$ (if $d$ is odd) or $\GL_m(q^d)$ (if $d$ is even).  

A \defn{maximal torus} in $\GG$ is any subgroup isomorphic to $(\KK^\times)^n$.  Given a $\Frob$-stable maximal torus in $\GG$, its $\Frob$-fixed points form a \defn{(rational) maximal torus} in $\GLU_n$.  The maximal tori of $\GG$ are all conjugate in $\GG$; however, the different maximal tori interact in different ways with the Frobenius maps, leading to multiple conjugacy classes of rational maximal tori in $\GLU_n$.  These are indexed as follows.  A \defn{partition} $\lambda$ is a finite nonincreasing sequence of positive integers. We write $\lambda \vdash n$ or $|\lambda| = n$ to denote that the sum of the parts of $\lambda$ is $n$ (that ``$\lambda$ is a partition of $n$'').  For each partition $\lambda = \langle \lambda_1, \ldots, \lambda_\ell\rangle$ of $n$, choose a Levi subgroup $\TTT_\lambda$ indexed by $\{(\lambda_1, 1), \ldots, (\lambda_\ell, 1)\}$.  Then this subgroup is a rational maximal torus of $\GLU_n$, and furthermore every rational maximal torus of $\GLU_n$ is conjugate in $\GLU_n$ to $\TTT_\lambda$ for exactly one partition $\lambda$ of $n$.  In $\GLnFq$ for all $n$ and $\GUnFq$ for $n$ odd, the Singer subgroups are precisely the conjugates of $\TTT_{\langle n\rangle}$.

We are now ready to describe the conjugacy classes of arbitrary elements of $\GLU_n$.  Let $\rootorbs$ denote the set of $\Frob$-orbits in $\KK^\times$.  As in Remark~\ref{rem:rcf}, we identify orbits in $\rootorbs$ with polynomials via
\[
\phi \in \rootorbs \quad \longleftrightarrow \quad  \prod_{\alpha \in \phi} (z - \alpha).
\]
Let $\Par$ denote the set of all integer partitions, and $\Par_n$ the partitions of $n$.  The conjugacy classes in $\GLU_n$ are indexed by \defn{multipartitions}, that is, by functions $\llambda: \rootorbs \to \Par$ such that
\begin{equation}\label{eq:cc indexing}
n = \sum_{\phi \in \rootorbs} |\phi| \cdot |\llambda(\phi)|,
\end{equation}
as we describe now.

Say that an element of $\GG$ (hence of $\GLU_n$) is \defn{semisimple} if it is diagonalizable over the (algebraically closed) field $\KK$ (equivalently, if it belongs to some maximal torus of $\GG$, or if it has multiplicative order relatively prime to $q$), and that an element is \defn{unipotent} if its eigenvalues over $\KK$ are all equal to $1$ (equivalently, if its multiplicative order is a power of $q$).  In this context, the \defn{Jordan canonical form} asserts that every element $g$ of $\GG$ can be written uniquely as $g = su$ where $s$ is semisimple, $u$ is unipotent, and $s$ and $u$ commute \cite[1.4]{Carter}.  This decomposition descends to the finite groups $\GLU_n$, i.e., if $g \in \GLU_n$ then $u, s \in \GLU_n$ as well, by uniqueness.

In each conjugacy class of $\GLU_n$, there is a natural choice of representative for which the Jordan decomposition can be made completely explicit, extending the construction of Remark~\ref{rem:rcf} to use block-diagonal matrices whose blocks are companion matrices of polynomials, the identity, or $0$---see \cite[\S3]{ThiemVinroot} for details.  For our purposes, it suffices to remark that each semisimple element $s$ in $\GLU_n$ encodes a collection of orbits in $\rootorbs$ (namely, those containing its eigenvalues), that the unitary part of the decomposition $g = su = us$ encodes simultaneously for each of these orbits $\phi$ a partition $\llambda(\phi)$, and that this association automatically satisfies \eqref{eq:cc indexing}.  In particular, representing a multipartition $\llambda$ by the set of pairs $(\phi, \llambda(\phi))$ for which $\llambda(\phi)$ is not the empty partition, we have the following special cases:
\begin{itemize}
    \item if $g$ is itself unipotent, then in its Jordan decomposition we have $s = 1$ and $u = g$, and the multipartition indexing its conjugacy class is $\llambda = \big\{ (z - 1, \lambda) \big\}$, where $\lambda$ is the ``usual'' partition that arises in elementary linear algebra from the theory of generalized eigenvectors;
    \item specifically, the conjugacy class of transvections in $\GLU_n$ is indexed by the multipartition $\big\{(z - 1, \langle 2, 1^{n - 2}\rangle)\big\}$;
    \item if $g$ is itself semisimple, then in its Jordan decomposition we have $s = g$ and $u = 1$, and the multipartition indexing its conjugacy class is given by $\llambda = \big\{ (f_1, \langle 1^{k_1}\rangle), (f_2, \langle 1^{k_2}\rangle), \ldots \big\}$ where each $f_i$ represents a single $\Frob$-orbit and the characteristic polynomial of $g$ factors as $f_1^{k_1} \cdot f_2^{k_2} \cdots$;
    \item specifically, the conjugacy class of semisimple reflections of determinant $\alpha$ is indexed by the multipartition $\big\{ (z - 1, \langle 1^{n - 1}\rangle ), (z - \alpha, \langle 1\rangle)\big\}$; and
    \item if $c$ is an irreducible element with characteristic polynomial $f$ (necessarily irreducible), its conjugacy class is indexed by the multipartition $\big\{(f, \langle 1\rangle)\big\}$.
\end{itemize}
We call multipartitions that have support of size $1$ (like those indexing the transvections and irreducible elements) \defn{primary}.


\subsection{Centralizers}\label{sec:centralizers}

In what follows, we will need at several points to write down explicitly the centralizers of certain elements in $\GLU_n$, or their cardinalities.  These have been worked out in \cite{Green, Ennola, Wall}.  We mention the cases that will be of interest to us in detail.

For a semisimple element $w$ in $\GLU_n$, the centralizer $C_{\GLU_n}(w)$ consists of those elements that stabilize the eigenspaces of $w$.  In the case that $w = t$ is a semisimple reflection, this means that the centralizer $C_{\GLU_n}(t)$ is conjugate to the block-diagonal direct product $\GLU_1 \times \GLU_{n - 1}$.  Therefore, by \eqref{eq:GLU cardinality}, the number of semisimple reflections in $\GLU_n$ with a given determinant $\alpha \neq 1$ is 
\[
\frac{|\GLU_n|}{|\GLU_1| \cdot |\GLU_{n - 1}|} = \frac{(\vep q)^{n - 1} \cdot ((\vep q)^n - 1)}{\vep q - 1}.
\]

The centralizer $C_{\GLnFq}(c)$ of a Singer cycle $c$ in $\GLnFq$ is the subgroup $\langle c \rangle$ generated by $c$ \cite[Satz~II.7.3]{Huppert1967}.  For any irreducible element $x$ in $\GLnFq$, choose a Singer cycle $c$ and an integer $k$ such that $x = c^k$.  Since $x$ is irreducible, it has distinct eigenvalues over $\KK = \ol{\Fq}$.  Therefore, any $g \in \GLnFq$ that commutes with $x$ must stabilize its $n$ eigenspaces.  But since $x = c^k$, $x$ and $c$ have the same eigenspaces, and so $g$ commutes with $c$, as well.  Therefore the centralizer of $x$ in $\GLnFq$ is also the Singer subgroup $\langle c \rangle$.  It follows immediately that the same holds for irreducible elements $x$ in any subgroup of $\GLnFq$, and consequently in particular for Singer cycles in $\GUnFq$ (where now the previous $q$ must be taken to be $q^2$).

\begin{remark}\label{rem:connected}
In the general linear and general unitary groups (but not in all finite groups of Lie type), the centralizers of semisimple elements are always Levi subgroups.  In the general character theory introduced in the next subsection, it will also be necessary to consider the $\Frob$-fixed points of the connected component $C^\circ_{\GG}(\cdot)$ (in the Zariski topology) that contains the identity of a centralizer.  However, because $[\GG, \GG] = \SL_n(\mathbb{K})$ is simply connected, it follows from \cite[Thm.~14.16]{MalleTesterman} that these centralizers are connected, so that $C^\circ_{\GG}(\cdot)^{\Frob} = C_{\GG}(\cdot)^{\Frob} = C_{\GLU_n}(\cdot)$ in all cases we will consider.  We note further that all Levi subgroups of $\GG$ are connected, being products of linear groups, as are the centralizers of semisimple elements in those Levi subgroups.
\end{remark}

The structure of the centralizer of a transvection is more complicated.  For our purposes, it is enough to note that the number of transvections in $\GLU_n$ is \cite[p.~37, 59]{Fulman-thesis}
\[\frac{((\vep q)^n - 1)((\vep q)^{n - 1} - 1)}{\vep q - 1}.\]

\subsection{Character indexing and the character formula}
\label{sec:characters}

For a positive integer $k$, let $T_{k} = \GLU_1(q^k)$.  For any positive integers $r, m$ with $r\mid m$, there is a norm map
\begin{align*}
N_{m,r}\colon T_m & \to T_r \\
x & \mapsto x^{\frac{(\vep q)^m - 1}{(\vep q)^r - 1}}
\end{align*}
and we may identify $\KK^\times$ with the inverse limit $\varprojlim T_m$ with respect to this system of maps.  As $T_m$ is abelian, its irreducible (complex-valued) characters form its \defn{character group $\wh{T}_m$} under pointwise multiplication.  For $r \mid m$, let $\wh{N}_{m,r} : \wh{T}_r \to \wh{T}_m$ denote the dual norm map, defined by $\wh{N}_{m,r}(\xi) = \xi\circ N_{m,r}$.  The direct limit $\varinjlim \wh{T}_r = \wh{\KK}^\times$ is the character group of $\KK^\times$, and the action of the Frobenius map $\Frob$ extends to this group via $\Frob(\zeta)(x) = \zeta(\Frob(x))$; let $\charorbs$ denote the set of $\Frob$-orbits in $\wh{\KK}^\times$.  The irreducible characters of $\GLU_n$ are indexed by functions $\llambda: \charorbs \to \Par$ such that
\[
n = \sum_{\phi \in \charorbs} |\phi| \cdot |\llambda(\phi)|,
\]
as we describe next (following \cite{SF-V, ThiemVinroot}).

The basic building-block for the character theory of the $\GLU_n$ is \defn{Lusztig induction}, of which we give a (very) partial account.  If $G$ is a finite group of Lie type to which Remark~\ref{rem:connected} applies (i.e., connectedness conditions hold), $L$ is a rational Levi subgroup of $G$, $\chi$ is an irreducible character of $L$, and $g$ is an element of $G$, the value $(R^G_L(\chi))(g)$ of the Lusztig-induced character $R^G_L(\chi)$ is given by
\begin{equation}
\label{eq:induction}
(R^G_L(\chi))(g) =  \frac{1}{|L| \cdot |C_{G}(s)|} \sum_{h \in G \colon s \in {^h L}} |C_{^hL}(s)| \sum_{v \in \left(C_{^hL}(s)\right)_\text{unip.}} \Grfn^{C_G(s)}_{C_{^hL}(s)}(u, v^{-1}) \cdot ({^h \chi})(sv)
\end{equation}
where $g = su$ is the Jordan decomposition of $g$, $^hL = hLh^{-1}$ denotes a conjugate of $L$ and $^h\chi$ the corresponding character of $^hL$ conjugate to $\chi$, $\left(C_{^hL}(s)\right)_\text{unip.}$ denotes the set of unipotent elements in $C_{^hL}(s)$, and $\Grfn$ is a \defn{(two-variable) Green function} \cite[Prop.~12.2(i)]{DigneMichel}.  These functions are defined \cite[Def.~12.1]{DigneMichel} in terms of traces on a certain $\ell$-adic cohomology; we omit the details because, as we see below, it will be possible to avoid using the definition directly.

Now restrict to the case that $G = \GLU_n$.  Choose an orbit $\phi \in \charorbs$ whose size $|\phi| = d$ divides $n$, and let $\gamma$ be a partition of $n/d$.  Each $\theta \in \phi$ is a character on $T_d$, hence also (via the norm map) on $T_{d \gamma_i}$ for each $i$.  By choosing a (fixed but arbitrary) identification  $\TTT_{d \gamma} := T_{d \gamma_1} \times  \cdots \times T_{d \gamma_\ell}$, $\theta$ further extends to a character of $\TTT_{d \gamma}$ by taking the product of the values on each factor.  The torus $\TTT_{d\gamma}$ is itself a Levi subgroup of $\GLU_n$, and the (Lusztig-)induced character $R_{\TTT_{d\gamma}}^{\GLU_n}(\theta)$ is called a \defn{Deligne--Lusztig character} of $\GLU_n$, first defined in \cite[1.20]{DeligneLusztig} (see also \cite[11.14]{DigneMichel}).  For any partition $\lambda \vdash n/d$, the formula
\begin{equation}
\label{eq:primary char}
\chi^{\phi, \lambda} := (-1)^{n + |\lambda|} \cdot\vep^{d \cdot a(\lambda') + \lfloor n/2 \rfloor} \cdot \sum_{\gamma \in \Par_{n/d}} \frac{\omega^\lambda(\gamma)}{z_\gamma} R^{\GLU_{n}}_{\TTT_{d\gamma}}(\theta)
\end{equation}
manifestly defines a class function on $\GLU_n$, where $\omega^\lambda$ is an irreducible character of the symmetric group $\Symm_{n/d}$ (indexed so that the one-row partition $(n)$ is the trivial character), $z_\gamma$ is the size of the centralizer in $\Symm_{n/d}$ of an element of cycle type $\gamma$, $a$ is the function\footnote{Often denoted ``$n$'' in the literature.} on partitions defined by $a(\mu) = \sum_i (i - 1)\mu_i$, $\lambda'$ denotes the transpose partition of $\lambda$, and $\theta \in \phi$ is any element of the orbit $\phi$.  In fact this class function is an irreducible character on $\GLU_n$: the sign in \eqref{eq:primary char} for the unitary ($\vep = -1$) case is given explicitly (and more generally) in the Remark after \cite[Thm.~4.3]{ThiemVinroot}, and for the linear ($\vep=1$) case this is exactly \cite[Lem.~7.5]{Green}. We call such characters \defn{primary irreducible characters} (as in \cite[\S8]{Green}). The degrees of the primary irreducible characters in $\GLU_n$ are given as follows.
For a nonnegative integer $d$ and an integer partition $\mu$, define the polynomial
\[
f^{d, \mu}(z) := \frac{z^{d \cdot a(\mu)}}{\prod_{y \in \mu} (z^{d \cdot h(y)} - 1)} \cdot \prod_{i = 1}^{d \cdot |\mu|} (z^{i} - 1) 
\]
where $h(y)$ denotes the hook-length of a cell $y$ in the Young diagram of $\mu$.  In the general linear group case ($\vep =1$), the following is from the work of J. A. Green \cite[Thm.~14]{Green}.  In the unitary case ($\vep = -1$), the result is stated explicitly in \cite[Thm.~5.1]{ThiemVinroot}, but also follows from more general results of Lusztig \cite[Thm.~4.23]{Lusztig}. 
\begin{proposition}
\label{prop:degrees}
For a prime power $q$, positive integers $d \mid n$, an orbit $\phi \in \charorbs$ of size $d$, and an integer partition $\lambda \vdash n/d$, the degree of the primary irreducible character $\chi^{\phi, \lambda}$ for $\GLU_n$ is
\[
\chi^{\phi, \lambda}(1) = \vep^{d \cdot a(\lambda) + d\cdot \sum_{y \in \lambda} h(y) + \binom{n + 1}{2}}  \cdot f^{d, \lambda}( \vep q).
\]
\end{proposition}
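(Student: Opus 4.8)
The plan is to evaluate the defining formula \eqref{eq:primary char} at the identity element $g = 1$ and to recognize the resulting sum as a principal specialization of a Schur function. Since the identity is simultaneously semisimple and unipotent, the general Lusztig-induction formula \eqref{eq:induction} collapses at $g = 1$ to the standard dimension formula for Deligne--Lusztig induction (see \cite{DigneMichel}),
\[
\bigl(R^{\GLU_n}_{\TTT_{d\gamma}}(\theta)\bigr)(1) = \epsilon_{\GLU_n}\,\epsilon_{\TTT_{d\gamma}}\cdot\frac{|\GLU_n|_{p'}}{|\TTT_{d\gamma}|},
\]
where $p$ is the defining characteristic, $|H|_{p'}$ is the $p'$-part of $|H|$, and $\epsilon_H = (-1)^{\sigma(H)}$ for $\sigma(H)$ the $\FF_q$-rank of $H$ (dimension of a maximal split subtorus); in particular the undetermined Green functions never intervene. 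By \eqref{eq:GLU cardinality} I have $|\GLU_n|_{p'} = \prod_{i = 1}^n (q^i - \vep^i)$, which is independent of $\gamma$, while $|\TTT_{d\gamma}| = \prod_i (q^{d\gamma_i} - \vep^{d\gamma_i})$; and an elementary split-rank computation gives $\epsilon_{\GLU_n}\epsilon_{\TTT_{d\gamma}} = (-1)^{n - \ell(\gamma)}$ when $\vep = +$ and $(-1)^{\lfloor n/2\rfloor + e(d\gamma)}$ when $\vep = -$, where $\ell(\gamma)$ counts the parts of $\gamma$ and $e(\mu)$ counts the even parts of a partition $\mu$.

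After substituting this into \eqref{eq:primary char} at $g = 1$ and pulling the $\gamma$-independent factor $\prod_{i=1}^n(q^i - \vep^i)$ out of the sum, the degree takes the shape $(\text{sign})\cdot\prod_{i = 1}^n(q^i - \vep^i)\cdot\sum_{\gamma \in \Par_{n/d}} \tfrac{\omega^\lambda(\gamma)}{z_\gamma}(\text{sign}_\gamma)\prod_i (q^{d\gamma_i} - \vep^{d\gamma_i})^{-1}$. Next I would set $t := (\vep q)^d$ and use the identity $q^{dk} - \vep^{dk} = \vep^{dk}\bigl((\vep q)^{dk} - 1\bigr)$ to rewrite each factor $(q^{d\gamma_i} - \vep^{d\gamma_i})^{-1}$ as $\pm(1 - t^{\gamma_i})^{-1} = \pm\, p_{\gamma_i}[1, t, t^2, \ldots]$, the value of the power sum $p_{\gamma_i}$ at the geometric (principal) specialization. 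The $\gamma$-dependent signs introduced this way should cancel exactly against $(\text{sign}_\gamma) = \epsilon_{\GLU_n}\epsilon_{\TTT_{d\gamma}}$ --- this is where the precise parities from the previous paragraph, together with $\sum_i \gamma_i = n/d$, get used --- leaving the inner sum equal, up to one more $\gamma$-independent sign, to $\sum_{\gamma}\tfrac{\omega^\lambda(\gamma)}{z_\gamma}p_\gamma[1, t, t^2, \ldots]$. By the Frobenius character formula expressing Schur functions through power sums this is $s_\lambda[1, t, t^2, \ldots]$, and by the hook-length formula for the principal specialization of a Schur function it equals $\dfrac{t^{a(\lambda)}}{\prod_{y \in \lambda}(1 - t^{h(y)})}$.

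Finally I would reassemble the pieces: converting $\prod_{i=1}^n (q^i - \vep^i)$ back to $\prod_{i=1}^{d|\lambda|}((\vep q)^i - 1)$ (contributing a sign $\vep^{\binom{n+1}{2}}$, since $d|\lambda| = n$) and $\prod_{y}(1 - t^{h(y)})$ to $\prod_{y}((\vep q)^{d\,h(y)} - 1)$ assembles exactly the polynomial $f^{d, \lambda}(\vep q)$. To reconcile the leftover $\vep$-valued sign with the claimed exponent, I would invoke the hook-length identity $\sum_{y \in \lambda} h(y) = a(\lambda) + a(\lambda') + |\lambda|$ together with the elementary congruence $\binom{n}{2} \equiv \lfloor n/2 \rfloor \pmod 2$, which give $d\cdot a(\lambda) + d\sum_{y} h(y) + \binom{n+1}{2} \equiv d\cdot a(\lambda') + \lfloor n/2 \rfloor \pmod 2$ --- exactly the $\vep$-power already present in \eqref{eq:primary char} --- while the genuine $\pm1$ factors (the $(-1)^{n + |\lambda|}$ in \eqref{eq:primary char}, the signs $\epsilon_H$, and the signs from the two conversions) all cancel. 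The main obstacle is precisely this sign bookkeeping: it is routine when $\vep = +$, where it reproduces Green's degree formula \cite[Thm.~14]{Green}, but in the unitary case the split-rank signs and the $(\vep q)^k \leftrightarrow q^k$ conversions introduce several $\gamma$- and $n$-dependent factors that must be shown to conspire, recovering \cite[Thm.~5.1]{ThiemVinroot} (cf.\ \cite[Thm.~4.23]{Lusztig}). Alternatively, one may simply cite those references, which is the route the statement above already signals.
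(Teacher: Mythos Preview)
Your proposal is sound, but note that the paper does not actually \emph{prove} this proposition: the sentences immediately preceding the statement attribute the linear case to Green \cite[Thm.~14]{Green} and the unitary case to \cite[Thm.~5.1]{ThiemVinroot} (or more generally \cite[Thm.~4.23]{Lusztig}), and no argument is given in the body. Thus what you have written is not a reconstruction of the paper's proof but a self-contained derivation that the paper chose to outsource.

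That said, your outline is exactly the standard route to these degree formulae and is correct in its essentials. The dimension formula for $R^G_T(\theta)$ at the identity, the split-rank computations $\epsilon_{\GLU_n}\epsilon_{\TTT_{d\gamma}} = (-1)^{n - \ell(\gamma)}$ (linear) and $(-1)^{\lfloor n/2\rfloor + e(d\gamma)}$ (unitary), the passage to the principal specialization $s_\lambda[1,t,t^2,\ldots] = t^{a(\lambda)}/\prod_{y}(1 - t^{h(y)})$ via the Frobenius formula, and the final sign identity $d\cdot a(\lambda) + d\sum_y h(y) + \binom{n+1}{2} \equiv d\cdot a(\lambda') + \lfloor n/2\rfloor \pmod 2$ are all correct; the paper itself records this last congruence in the sentence following the proposition. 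Your honest caveat that the unitary sign bookkeeping is the only genuinely delicate step is well placed---it is precisely why the authors cite \cite{ThiemVinroot} rather than reproving it. If you want to match the paper, simply cite; if you want to go beyond it, your sketch is a legitimate proof once the $\gamma$-dependent cancellation in the unitary case is written out line by line.
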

Since for any partition $\lambda$, $\sum_{y \in \lambda} h(y) = a(\lambda) + a(\lambda') + |\lambda|$, the sign in front may be written more simply as $\vep^{d \cdot a(\lambda') + \binom{n}{2}}$; also note that we have $(-1)^{\binom{n}{2}} = (-1)^{\lfloor n/2 \rfloor}$.

All other characters come from the primary characters by Lusztig induction: for any $\llambda: \charorbs \to \Par$ with $n = \sum_\phi |\phi| \cdot |\llambda(\phi)|$, the irreducible character of $\GLU_n$ indexed by $\llambda$ is given (for an appropriate choice of sign) by
\begin{equation}
\label{eq:all irreducibles}    
\chi^\llambda = \pm R^{\GLU_n}_L\left(\chi^{\phi_1, \llambda(\phi_1)} \times \cdots \times \chi^{\phi_\ell, \llambda(\phi_\ell)}\right)
\end{equation}
where $\{\phi_1, \ldots, \phi_\ell\}$ is the support of $\llambda$ (i.e., the set of orbits to which $\llambda$ assigns a nonzero partition) and $L$ is a Levi subgroup of $\GLU_n$ indexed by the multiset $\{ (|\phi_i|, |\llambda(\phi_i)|)\}$.

In the parameterization above, the trivial character of $\GLU_n$ is the primary character $\chi^{\llambda}$, where $\llambda(\phi) = \langle n\rangle$ and $\phi$ is the (singleton) orbit of the trivial character.  We note that we could alternatively parameterize the characters of $\GLU_n$ by replacing each partition by its transpose, and such a parameterization is also useful, see \cite[p.~721, Remark]{ThiemVinroot}.  One must of course take care in the corresponding formulae for characters when replacing $\lambda$ with $\lambda'$.

The preceding considerations essentially reduce the computations of character values of $\GLU_n$ to the evaluation of two-variable Green functions.  These evaluations are greatly simplified by the following considerations.  First, in every case in which we will want to compute a two-variable Green function evaluation $\Grfn^G_L(u, v)$, we will have that $L$ is a maximal torus $\TTT$ in $G$ and that $v = 1$ is the identity.  In this case, the two-variable Green function simplifies to a \emph{one-variable Green function} (as in, for example, \cite{ThiemVinroot}) by the relation $\Grfn_\TTT^G(u, 1) = \Grfn_\TTT^G(u)$ \cite[\S2.1]{Lubeck}.  Second, the Green function $\Grfn^{\GL_n}_\TTT(u)$ for the general linear group $\GLnFq$ is constructed to coincide exactly with a \defn{Green polynomial} $Q^{\lambda}_{\mu}(q)$ of symmetric function theory \cite[Ex.~2.4.20]{GeckMalle} \cite[Chs.~III.7,~IV.6]{Macdonald}, as follows: if $u$ is unipotent in $\GLnFq$ with associated partition $\mu$, then $\Grfn_{\TTT_\gamma}^{\GLnFq}(u) = Q^\mu_\gamma(q)$.  Moreover, Green functions for $\GLnFq$ expand nicely across direct products \cite[Lem.~2.5(a)]{GLT2020}, so that if $L = \GL_{m_1}(q^{d_1}) \times \cdots \times \GL_{m_\ell}(q^{d_{\ell}})$ is a Levi subgroup of $\GLnFq$ indexed by the multiset $\{(d_1, m_1), \ldots, (d_\ell, m_\ell)\}$, $u$ is a unipotent element of $L$, and $\TTT$ is a maximal torus of $L$ such that for each diagonal block $(d_i, m_i)$ of $L$, the unipotent $u$ assigns the partition $\mu(i) \vdash m_i$, while the portion of the torus $\TTT$ that lies in this block is conjugate to $\TTT_{\gamma(i)}$ for the partition $\gamma(i) \vdash m_i$, then
\[
\Grfn^L_{\TTT}(u) = \prod_{i = 1}^\ell Q^{\mu(i)}_{\gamma(i)}(q^{d_i}).
\]
Finally, as explained in \cite{ThiemVinroot}, the following theorem of Kawanaka (extending \cite[Thm.~3.1]{HottaSpringer}) tells us how to compute one-variable Green functions of the unitary group. 

\begin{theorem}[Ennola duality {\cite[Thm.~4.1.2]{Kawanaka}}; see also {\cite[Thm.~4.1]{ThiemVinroot}}]\label{thm:ennola}

Consider a Levi subgroup $L$ of $\GUnFq$, a unipotent element $u \in L$, and a maximal torus $\TTT$ of $L$.  Suppose that (as in Section~\ref{sec:conjugacy classes}) $L$ is indexed by the multiset $\{(d_1, m_1), \ldots, (d_\ell, m_\ell)\}$, and that for the block of $L$ with indices $(d_i, m_i)$, the unipotent $u$ assigns the partition $\mu(i) \vdash m_i$ while the portion of the torus that lies in this block is conjugate to $\TTT_{\gamma(i)}$ for a partition $\gamma(i) \vdash m_i$.  Then the Green function $\Grfn^{L}_{\TTT}(u)$ is given by 
\[
\Grfn^{L}_{\TTT}(u) = \prod_{i = 1}^\ell Q^{\mu(i)}_{\gamma(i)}\left( (-q)^{d_i} \right),
\]
where $Q^{\mu(i)}_{\gamma(i)}$ is a Green polynomial.
\end{theorem}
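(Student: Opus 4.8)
The plan is to reduce the statement to two ingredients already in hand: the identification, recalled just above the theorem, of one-variable Green functions of $\GLnFq$ with the Green polynomials $Q^\mu_\gamma(q)$ of symmetric function theory; and Kawanaka's Ennola-duality theorem in the special case of a Levi subgroup consisting of a single unitary block.

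First I would record that Green functions are multiplicative over direct products: if a rational group of Lie type decomposes as $L = L_1 \times \cdots \times L_\ell$ with a compatible decomposition of a maximal torus $\TTT = \TTT_1 \times \cdots \times \TTT_\ell$ and of a unipotent element $u = (u_1, \ldots, u_\ell)$, then $\Grfn^L_\TTT(u) = \prod_{i=1}^\ell \Grfn^{L_i}_{\TTT_i}(u_i)$. This follows from the Künneth formula applied to the $\ell$-adic cohomology defining the Green function; in the purely linear case it is \cite[Lem.~2.5(a)]{GLT2020}. Applying this to the block-diagonal model of the Levi $L$ of $\GUnFq$ indexed by $\{(d_i, m_i)\}$ reduces the claim to a single factor, which by the description in Section~\ref{sec:conjugacy classes} is $\GL_{m_i}(q^{d_i})$ when $d_i$ is even and $\GU_{m_i}(q^{d_i})$ when $d_i$ is odd.

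Next I would dispatch the two cases. When $d_i$ is even the factor is an honest general linear group over $\FF_{q^{d_i}}$, so the identification $\Grfn^{\GL_{m_i}(q^{d_i})}_{\TTT_{\gamma(i)}}(u_i) = Q^{\mu(i)}_{\gamma(i)}(q^{d_i})$ applies directly, and $q^{d_i} = (-q)^{d_i}$ since $d_i$ is even. When $d_i$ is odd the factor is a genuine unitary group $\GU_{m_i}(q^{d_i})$, and Kawanaka's theorem \cite[Thm.~4.1.2]{Kawanaka} (see also \cite[Thm.~4.1]{ThiemVinroot}) gives $\Grfn^{\GU_{m_i}(q^{d_i})}_{\TTT_{\gamma(i)}}(u_i) = Q^{\mu(i)}_{\gamma(i)}(-q^{d_i})$, and $-q^{d_i} = (-q)^{d_i}$ since $d_i$ is odd. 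In either case the per-block evaluation is $Q^{\mu(i)}_{\gamma(i)}\big((-q)^{d_i}\big)$, and multiplying these together via the first step yields the stated formula.

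The mathematically hard input — the Ennola-duality identity for the Green functions of a single $\GU$ — is not reproved here; it is imported from Kawanaka's work, where it rests on properties of the generalized Springer correspondence and the intersection cohomology of unipotent classes. What remains for us is the bookkeeping: checking that the block decomposition of $L$, the assignment of the partitions $\mu(i)$ by $u$, and the splitting of $\TTT$ into the factors $\TTT_{\gamma(i)}$ are all compatible, so that the multiplicativity statement applies literally, and tracking the uniform parity identity $(-q)^{d_i} = \pm q^{d_i}$. This last step is the only place where anything could plausibly go wrong, and it is routine.
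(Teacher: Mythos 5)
Your proposal is correct and is essentially the route the paper takes: the paper states this result purely by citation (Kawanaka's Ennola duality, in the packaged Levi-subgroup form of Thiem--Vinroot), and your argument imports the same hard single-block unitary input from Kawanaka while supplying only the routine multiplicativity of Green functions over the block decomposition and the parity identity $(-q)^{d_i}=\pm q^{d_i}$, which is exactly how the cited sources assemble the statement.
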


\section{Proofs of the main theorems}\label{sec:proofs}

The first step in the proof of our main theorems is to compute the character values necessary to apply Proposition~\ref{general factorization prop}.  We will see that for a Singer cycle $c$, $\chi(c) = 0$ for most irreducible characters; since this greatly reduces the number of character values we have to consider, we start there.

\subsection{Sparse character values on irreducible elements}

Fix an irreducible element $c$ in $\GLU_n$.  Since $c$ is semisimple, it its Jordan decomposition $c = su$ we have $s = c$ and $u = 1$.  Since $c$ is irreducible, by definition it does not stabilize any nontrivial subspace.  Thus, neither $c$ nor any of its conjugates lie in any proper subgroup of block-diagonal matrices.  It follows immediately from \eqref{eq:induction} that $\chi^\llambda(c) = 0$ when $\llambda$ is not primary, since there are no elements $h$ in $\GLU_n$ for which $c$ belongs to $^h L$ (equivalently, for which $h^{-1}ch$ belongs to $L$) when $L$ is a nontrivial direct product.

Furthermore, for $\chi^{\phi, \lambda}$ primary and $d = |\phi|$, if $\gamma \vdash n/d$ has more than one part then $\TTT_{d\gamma}$ is contained in a Levi subgroup of $\GLU_n$ of block-diagonal matrices with more than one block. Thus $(R^{\GLU_n}_{\TTT_{d\gamma}}(\theta))(c) = 0$ in this case.  Then it follows from \eqref{eq:primary char} that
\begin{equation}
\label{eq:primary hook Singer}
\chi^{\phi, \lambda}(c) = (-1)^{n + |\lambda|} \cdot\vep^{d \cdot a(\lambda') + \lfloor n/2 \rfloor} \cdot \frac{\omega^\lambda( \langle n/d \rangle )}{z_{\langle n/d \rangle}} (R^{\GLU_n}_{T_n}(\theta))(c).
\end{equation}
By the Murnaghan--Nakayama rule for character values in the symmetric group $\Symm_{n/d}$, $\omega^\lambda(\langle n/d \rangle) = 0$ unless $\lambda = \langle n/d - k, 1^k\rangle$ is \defn{hook-shaped} ($k = 0, \ldots, n/d - 1$).

The two preceding paragraphs amount to the proof of the following result, which extends \cite[Prop.~4.7(i,ii)]{LRS} to the unitary case.

\begin{proposition}\label{prop:mostly zero}
    If $c$ is an irreducible element in $\GLU_n$ (necessarily with $n$ odd in the case $\vep = -$) and $\chi = \chi^{\llambda}$ is an irreducible character for $\GLU_n$, then $\chi(c) = 0$ unless $\chi$ is a primary irreducible character, so that $\llambda$ is supported on a unique $\Frob$-orbit $\phi$ whose size $d$ is a divisor of $n$, and $\llambda(\phi)$ is a hook-shaped partition of $n/d$.
\end{proposition}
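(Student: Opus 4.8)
The plan is to feed the Lusztig-induction character formula \eqref{eq:induction} into the expressions \eqref{eq:primary char} and \eqref{eq:all irreducibles} for the irreducible characters of $\GLU_n$, the only geometric input being that an irreducible element is conjugate into no proper block-diagonal subgroup. Before starting I would record that, since $c$ is irreducible, Proposition~\ref{regular-elliptic-definition-proposition} makes its characteristic polynomial irreducible of degree $n$; hence $c$ is semisimple with $n$ distinct eigenvalues forming a single complete $\Frob$-orbit, its Jordan decomposition is $c = su$ with $s = c$ and $u = 1$, and it is conjugate into the torus $\TTT_{\langle n\rangle} = T_n$ but stabilizes no nonzero proper subspace.

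First I would dispose of the non-primary characters. If $\llambda$ is not primary then its support has size $\ell \geq 2$, so in \eqref{eq:all irreducibles} the Levi subgroup $L$ is a conjugate of a direct product of $\ell$ smaller linear/unitary groups filling diagonal blocks of sizes $<n$; in particular $L$ stabilizes a nonzero proper block subspace $W_1$. Substituting $g = c$ (so $s = c$) into \eqref{eq:induction}, the outer sum runs over $h$ with $h^{-1}ch \in L$; any such $h$ would force $c$ to stabilize $hW_1$, which is impossible, so that index set is empty and hence $\chi^\llambda(c) = 0$.

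Next I would treat a primary character $\chi^{\phi,\lambda}$ and put $d = |\phi|$; here $d \mid n$ automatically, since the indexing condition forces $n = d\,|\lambda|$, so the divisibility assertion in the conclusion needs no separate proof. Expanding via \eqref{eq:primary char}, every summand attached to a $\gamma \vdash n/d$ with more than one part carries a Deligne--Lusztig character $R^{\GLU_n}_{\TTT_{d\gamma}}(\theta)$ (for $\theta \in \phi$) whose inducing torus $\TTT_{d\gamma} \cong T_{d\gamma_1} \times \cdots$ lies in a proper block-diagonal subgroup, so the same emptiness argument kills its value at $c$. Only the one-part term $\gamma = \langle n/d\rangle$ survives, its torus being $\TTT_{\langle n\rangle} = T_n$, into which $c$ genuinely is conjugate; this leaves the single-term expression \eqref{eq:primary hook Singer}, in which $\omega^\lambda(\langle n/d\rangle)$ is the value of an irreducible $\Symm_{n/d}$-character on an $(n/d)$-cycle. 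By the Murnaghan--Nakayama rule that value vanishes unless $\lambda$ is a hook $\langle n/d - k, 1^k\rangle$, and assembling the three cases proves the proposition.

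I do not expect any genuinely hard step: once the character formulae are granted, the argument is bookkeeping. The only point deserving care is to apply the emptiness observation uniformly and correctly --- that it applies to the product Levis arising from non-primary $\llambda$ and to the tori $\TTT_{d\gamma}$ with $\gamma$ having more than one part, but that it does \emph{not} apply to $T_n = \TTT_{\langle n\rangle}$ (into which $c$ is conjugate), so the surviving one-part term is not spuriously annihilated.
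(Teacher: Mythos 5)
Your proposal is correct and follows essentially the same route as the paper: the emptiness of the index set in \eqref{eq:induction} kills non-primary characters and all multi-part torus terms in \eqref{eq:primary char}, leaving only the $\gamma = \langle n/d\rangle$ term, where the Murnaghan--Nakayama rule forces $\lambda$ to be a hook. No gaps.
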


\begin{cor}
\label{cor:sparse}
Let $X$ be a multiplicative subgroup of $\GLU_1$.  Given an element $w$ of $G_X$, let $\Fac^X_\ell(w)$ denote the number of factorizations of $w$ as a product of $\ell$ reflections in $G_X$.  Let $c$ be a Singer cycle in $G_X$.  Then
\begin{multline*}
\Fac^X_\ell(c) = \frac{1}{|\GLU_n|} \sum_{d \mid n} \sum_{\substack{\phi \in \charorbs: \\ |\phi| = d}} \sum_{k = 0}^{n/d - 1} \Bigg( \deg(\chi^{\phi, \hook{k}{n/d}})^{1 - \ell} 
\times{}
\\
{}\times
\chi^{\phi, \hook{k}{n/d}}(c^{-1}) \cdot \left(\sum_{t \text{ refn.\ in } G_X}\chi^{\phi, \hook{k}{n/d}}(t)\right)^\ell \Bigg).
\end{multline*}
\end{cor}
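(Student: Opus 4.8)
\emph{Proof sketch.} The plan is to apply the Frobenius formula (Proposition~\ref{general factorization prop}) directly in the ambient group $\GLU_n$ rather than in $G_X$, and then to invoke the vanishing result Proposition~\ref{prop:mostly zero} to discard all but the primary, hook-indexed characters.

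First I would let $R$ denote the set of reflections lying in $G_X$. By definition $\Fac^X_\ell(c)$ is the number of $\ell$-tuples $(t_1,\dots,t_\ell)\in R^\ell$ with $t_1\cdots t_\ell=c$, and since $R\subseteq G_X\subseteq\GLU_n$ and $c\in G_X$, this count is the same whether we view it as taking place in $G_X$ or in $\GLU_n$. By Proposition/Definition~\ref{prop:between SL and GL} (see also the remark following Proposition~\ref{general factorization prop}), the set $R$ is a union of $\GLU_n$-conjugacy classes: it is the class of transvections together with, for each $\alpha\in X$ with $\alpha\neq 1$, the class of semisimple reflections of determinant $\alpha$. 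Hence Proposition~\ref{general factorization prop} applies with $G=\GLU_n$, $g=c$, and $A_1=\cdots=A_\ell=R$, giving
\[
\Fac^X_\ell(c)=\frac{1}{|\GLU_n|}\sum_{W\in\Irr(\GLU_n)}\deg(W)^{1-\ell}\,\chi^W(c^{-1})\left(\sum_{t\in R}\chi^W(t)\right)^{\ell}.
\]

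To finish I would prune this sum. A Singer cycle $c$ in $G_X$ is by definition irreducible in $\GLU_n$, and $c^{-1}$ has exactly the same invariant subspaces as $c$, hence is irreducible as well (in particular $n$ is odd when $\vep=-$), so the hypotheses of Proposition~\ref{prop:mostly zero} are met for $c^{-1}$. Applying that proposition to $c^{-1}$, the factor $\chi^W(c^{-1})$ vanishes unless $W$ is a primary irreducible character $\chi^{\phi,\llambda(\phi)}$ whose supporting orbit $\phi\in\charorbs$ has size $d:=|\phi|$ dividing $n$ and whose partition $\llambda(\phi)=\hook{k}{n/d}$ is hook-shaped for some $k$ with $0\le k\le n/d-1$. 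Deleting the vanishing terms and reorganizing the survivors as a triple sum over $d\mid n$, over orbits $\phi\in\charorbs$ of size $d$, and over $k$ yields the stated formula. The argument is essentially bookkeeping; the only points warranting care are the observation that $R$ is conjugation-invariant in the \emph{full} group $\GLU_n$ (so Frobenius's formula may be run there, producing the denominator $|\GLU_n|$ and the characters of $\GLU_n$) and the remark that passing from $c$ to $c^{-1}$ preserves irreducibility, so that Proposition~\ref{prop:mostly zero} applies to $c^{-1}$. \qed
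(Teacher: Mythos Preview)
Your proof is correct and follows essentially the same approach as the paper: apply the Frobenius formula in the ambient group $\GLU_n$ (using that the reflections of $G_X$ form a union of $\GLU_n$-conjugacy classes), observe that $c^{-1}$ is irreducible because it has the same invariant subspaces as $c$, and then invoke Proposition~\ref{prop:mostly zero} to discard all but the primary hook-indexed characters. You are a bit more explicit than the paper in spelling out why the formula is run in $\GLU_n$ rather than $G_X$, but the argument is the same.
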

\begin{proof}
Since $G_X$ is finite, a subspace in any linear representation of $G_X$ is stabilized by an element $g$ if and only if it is stabilized by $g^{-1}$.  Therefore, the inverse $c^{-1}$ of any irreducible element $c \in G_X$ (in particular, of any Singer cycle) is also irreducible.  Combining Proposition~\ref{general factorization prop} and Proposition~\ref{prop:mostly zero} (applied to the character values $\chi(c^{-1})$ on the irreducible element $c^{-1}$) immediately gives the desired result.
\end{proof}

\subsection{Some character sums for Singer cycles}

In the final computation, we will need the following sum of character values on Singer cycles.

\begin{prop}\label{prop:Singer sum}
    Let $G$ be one of the groups for which $\SLnFq \leq G \leq \GLnFq$ or $\SU_n(q) \leq G \leq \GU_n(q)$ such that $G$ contains a Singer cycle $c$.  For each positive integer $d \mid n$ and each hook-shaped partition $\hook{k}{n/d}$ of $n/d$, we have
    \[
    \sum_{\substack{\phi \in \charorbs: \\ |\phi| = d}} \chi^{\phi, \hook{k}{n/d}}(c)
    =
    \begin{cases}
    \pm Q^{\langle1\rangle}_{\langle1\rangle}((\vep q)^n) \cdot (q - \vep) \cdot \mu(d) & \text{ if } G = \SLU_n, \\
    0 & \text{ otherwise}.
\end{cases}
    \]
    In particular, in the case $G = \SLU_n$, the sign is given by $(-1)^{k + n + n/d} \cdot\vep^{d \cdot a(\lambda') + \lfloor n/2 \rfloor}$.
\end{prop}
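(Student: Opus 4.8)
My plan is to start from the character formula~\eqref{eq:primary hook Singer}, which already reduces each relevant value $\chi^{\phi,\hook{k}{n/d}}(c)$ to a single Deligne--Lusztig character value.  Writing $\lambda := \hook{k}{n/d}$ and, for an orbit $\phi$ with $|\phi| = d$, letting $\theta := \wh N_{n,d}(\theta_0) = \theta_0\circ N_{n,d}$ be the character of the Coxeter torus $T_n = \TTT_{\langle n\rangle}$ obtained from a fixed $\theta_0 \in \phi$, the Murnaghan--Nakayama rule (which gives $\omega^\lambda(\langle n/d\rangle) = (-1)^k$) together with $z_{\langle n/d\rangle} = n/d$ turns~\eqref{eq:primary hook Singer} into
\[
\chi^{\phi,\hook{k}{n/d}}(c) \;=\; \sigma_{d,k,n} \cdot \frac dn \cdot \bigl(R^{\GLU_n}_{T_n}(\theta)\bigr)(c), \qquad \sigma_{d,k,n} := (-1)^{k+n+n/d}\,\vep^{\,d\cdot a(\lambda') + \lfloor n/2\rfloor}.
\]
It therefore suffices to evaluate $\bigl(R^{\GLU_n}_{T_n}(\theta)\bigr)(c)$ and then sum the result over all orbits $\phi$ of size $d$.

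For the first task I would feed $L = T_n$, $\chi = \theta$, and $g = s = c$ (the unipotent part being trivial) into the Lusztig-induction formula~\eqref{eq:induction}.  Because $c$ is irreducible, its centralizer $C_{\GLU_n}(c)$ is the full Singer subgroup $T_n$ (Section~\ref{sec:centralizers}); the elements $h$ contributing to~\eqref{eq:induction} are those with $h^{-1}ch \in T_n$, and since $T_n$ is abelian this puts $T_n$ inside $C_{\GLU_n}(h^{-1}ch) = h^{-1}T_n h$, forcing $h \in N_{\GLU_n}(T_n)$ by cardinality---and conversely every $h \in N_{\GLU_n}(T_n)$ works.  For each of these, $C_{{}^hL}(s) = T_n$ has no nontrivial unipotent element, so only $v = 1$ survives, and the Green-function factor is $\Grfn^{T_n}_{T_n}(1,1) = \Grfn^{T_n}_{T_n}(1) = Q^{\langle1\rangle}_{\langle1\rangle}((\vep q)^n)$ (which equals $1$; for $\vep = -$ this is the instance of Theorem~\ref{thm:ennola} applied to the torus $T_n = \GU_1(q^n)$).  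Since $N_{\GLU_n}(T_n)/T_n$ is cyclic of order $n$ and acts on $T_n$ by the $(\vep q)$-power map (Remark~\ref{rmk:Singer eigenvalues} in the unitary case), collecting the normalizing constants in~\eqref{eq:induction} gives
\[
\bigl(R^{\GLU_n}_{T_n}(\theta)\bigr)(c) \;=\; Q^{\langle1\rangle}_{\langle1\rangle}((\vep q)^n)\sum_{j=0}^{n-1}\theta\bigl(c^{(\vep q)^j}\bigr) \;=\; Q^{\langle1\rangle}_{\langle1\rangle}((\vep q)^n)\sum_{j=0}^{n-1}\theta_0\bigl(N_{n,d}(c^{(\vep q)^j})\bigr).
\]
In the linear case this step is essentially \cite[Prop.~4.7]{LRS}.

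For the second task I would first note that $c^{(\vep q)^n} = c$ --- since $|c| = |X|\cdot\tfrac{(\vep q)^n - 1}{\vep q - 1}$ (Proposition~\ref{prop:Singer cycles exist}) divides $|(\vep q)^n - 1|$ --- so the inner sum above is invariant under $\theta_0 \mapsto \Frob(\theta_0)$, hence depends only on $\phi$.  Consequently the sum over all size-$d$ orbits equals $\tfrac1d$ times the sum over all $\theta_0 \in \wh T_d$ whose $\Frob$-orbit has size exactly $d$; and since the characters of orbit-size dividing $d''$ (for $d'' \mid d$) are precisely $\wh N_{d,d''}(\wh T_{d''})$, Möbius inversion over the divisor lattice combined with the orthogonality relation $\sum_{\eta \in \wh T_{d''}}\eta(y) = |T_{d''}|\cdot[\,y = 1\,]$ yields
\[
\sum_{\substack{\phi \in \charorbs \\ |\phi| = d}}\ \sum_{j=0}^{n-1}\theta\bigl(c^{(\vep q)^j}\bigr) \;=\; \frac1d \sum_{d'' \mid d}\mu(d/d'')\,|T_{d''}|\sum_{j=0}^{n-1}\bigl[\,N_{n,d''}(c^{(\vep q)^j}) = 1\,\bigr].
\]
Now $N_{n,d''}(c^{(\vep q)^j}) = N_{n,d''}(c)^{(\vep q)^j}$ has order dividing $|T_{d''}| = |(\vep q)^{d''} - 1|$, which is prime to $q$, so the bracket equals $1$ iff $N_{n,d''}(c) = 1$, independently of $j$; and since $N_{n,d''}(c) = c^{[n]_{\vep q}/[d'']_{\vep q}}$ while $|c| = |X|\,[n]_{\vep q}$, this holds iff $|X|\cdot[d'']_{\vep q} = 1$, i.e. (as $[d'']_{\vep q}$ is a positive integer, equal to $1$ only when $d'' = 1$) iff $|X| = 1$ and $d'' = 1$---equivalently, iff $G = \SLU_n$.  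Hence if $G \neq \SLU_n$ the whole sum vanishes and the proposition reads $0$; while if $G = \SLU_n$ only the $d'' = 1$ term survives, contributing $\mu(d)\,|T_1| = \mu(d)(q-\vep)$ for each of the $n$ values of $j$, so the displayed sum is $\tfrac1d \cdot n \cdot \mu(d)(q - \vep)$.  Substituting back into the first display (the factors $\tfrac dn$ and $\tfrac nd$ cancel) gives $\sigma_{d,k,n}\, Q^{\langle1\rangle}_{\langle1\rangle}((\vep q)^n)\,(q - \vep)\,\mu(d)$, which is the claimed value with the claimed sign.

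I expect the main obstacle to be the first task: coaxing out of~\eqref{eq:induction} the clean identity $\bigl(R^{\GLU_n}_{T_n}(\theta)\bigr)(c) = Q^{\langle1\rangle}_{\langle1\rangle}((\vep q)^n)\sum_j \theta(c^{(\vep q)^j})$.  This requires correctly identifying the index set of contributing $h$, knowing the structure and action of the relative Weyl group $N_{\GLU_n}(T_n)/T_n$ (which is where Remark~\ref{rmk:Singer eigenvalues} enters in the unitary case), and evaluating the torus Green function uniformly in $\vep$ via Theorem~\ref{thm:ennola}.  Once that identity is available, the remaining steps are routine character orthogonality together with the elementary observation that $|(\vep q)^{d''} - 1| \geq q - \vep$ for all $d'' \geq 1$.
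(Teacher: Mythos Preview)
Your proposal is correct and follows essentially the same route as the paper: both reduce $\chi^{\phi,\hook{k}{n/d}}(c)$ via \eqref{eq:primary hook Singer} and the induction formula \eqref{eq:induction} to the character sum $\sum_{j}\theta(c^{(\vep q)^j})$ on the Coxeter torus, then handle the sum over orbits $\phi$ of size $d$ by M\"obius inversion over the divisor lattice combined with orthogonality of the torus characters.  The only organizational difference is that the paper first rewrites $\sum_{j}\theta(c^{(\vep q)^j})=\tfrac{n}{d}\sum_{\theta\in\phi}\theta(c)$ to obtain the clean per-orbit formula $\chi^{\phi,\lambda}(c)=\sigma\,Q^{\langle1\rangle}_{\langle1\rangle}((\vep q)^n)\sum_{\theta\in\phi}\theta(c)$ and then sums roots of unity directly, whereas you carry the sum over $j$ through and phrase orthogonality via the norm maps; one small point worth making explicit in your write-up is that your claim ``$[d'']_{\vep q}$ is a positive integer, equal to $1$ only when $d''=1$'' uses (in the unitary case) that $d''\mid n$ is odd.
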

\begin{proof}
We begin with the formula \eqref{eq:primary hook Singer} for the character value $\chi(c)$ at an irreducible element $c$ where $\chi := \chi^{\phi, \lambda}$ is a primary character, $|\phi| = d$ and $\lambda = \hook{k}{n/d}$ is a hook.  In this case, we have $z_{\langle n/d \rangle} = \frac{n}{d}$ (because the centralizer of a $m$-cycle in $\Symm_m$ is the cyclic subgroup it generates) 
and
$\omega^\lambda(\langle n/d \rangle) = (-1)^{k}$ (by the Murnaghan--Nakayama rule),
so 
\[
\chi^{\phi, \lambda}(c) = (-1)^{n + n/d} \cdot\vep^{d \cdot a(\lambda') + \lfloor n/2 \rfloor} \cdot
\frac{(-1)^k}{n/d} (R^{\GLU_n}_{T_n}(\theta))(c).
\]
Since $c$ is semisimple, in the Jordan decomposition $c = su$ for $c$ we have $s = c$ and $u = 1$.  Therefore, by \eqref{eq:induction}, we have
\begin{multline}
\label{eq:simplification 1}   
\chi^{\phi, \lambda}(c) = \frac{
(-1)^{k + n + n/d} \cdot\vep^{d \cdot a(\lambda') + \lfloor n/2 \rfloor} \cdot
d }
{n \cdot |T_n| \cdot |C_{\GLU_n}(c)| } 
\times{} 
\\
{}\times
\sum_{h \in \GLU_n \colon  c \in {^h T_n}} |C_{^hT_n}(c)| \sum_{v \in \left(C_{^hT_n}(s)\right)_\text{unip.}} \Grfn^{C_{\GLU_n}(c)}_{C_{^hT_n}(c)}(1, v^{-1}) \cdot ({^h \theta})(cv).
\end{multline}
Since $T_n$ (hence $^hT_n$) is abelian, the centralizer $C_{^h T_n}(c)$ is the whole torus ${^h T_n}$.  Moreover, $^hT_n$ consists entirely of semisimple elements, so $\left(C_{^hT_n}(s)\right)_\text{unip.}$ consists solely of the identity element. And it was also observed in Section~\ref{sec:centralizers} that the centralizer $C_{\GLU_n}(c)$ of $c$ in the larger group $\GLU_n$ is \emph{also} the whole torus.  Then \eqref{eq:simplification 1} can be simplified further to
\begin{equation}
\label{eq:simplification 2}   
\chi^{\phi, \lambda}(c) =  
\frac{
(-1)^{k + n + n/d} \cdot\vep^{d \cdot a(\lambda') + \lfloor n/2 \rfloor} \cdot
d }{n \cdot |T_n|^2 } \sum_{h \in \GLU_n \colon c \in {^h T_n}} |T_n| \cdot \Grfn^{^hT_n}_{^h T_n}(1, 1) \cdot ({^h \theta})(c).
\end{equation}
The values of $|T_n|$ and the Green function do not depend on $h$, so we can rewrite \eqref{eq:simplification 2} as
\begin{equation}
\label{eq:simplification 3}   
\chi^{\phi, \lambda}(c) =  
\frac{
(-1)^{k + n + n/d} \cdot\vep^{d \cdot a(\lambda') + \lfloor n/2 \rfloor} \cdot
d \cdot \Grfn^{T_n}_{T_n}(1, 1)}{n \cdot |T_n| } \sum_{h \in \GLU_n \colon c \in {^h T_n}} ({^h \theta})(c).
\end{equation}
Since $c$ has irreducible characteristic polynomial, there are $n$ conjugates of $c$ that belong to $T_n$, namely $c$, $c^{\vep q}$, \ldots, $c^{(\vep q)^{n - 1}}$.  Since $C_{\GLU_n}(c) = T_n$, the inner sum is equal to $|T_n| \sum_{i = 0}^{n - 1} \theta(c^{(\vep q)^i}) = |T_n| \sum_{i = 0}^{n - 1} \theta(\Frob^i(c))$.  Since furthermore $\phi$ is a Frobenius orbit of size $d$, we have $\sum_{i = 0}^{n - 1} \theta(\Frob^i(c)) = \frac{n}{d} \sum_{i = 0}^{d - 1} \theta(\Frob^i(c)) = \frac{n}{d} \sum_{\theta \in \phi} \theta(c)$, and so 
\begin{equation}
\label{eq:simplification 4}  
\chi^{\phi, \lambda}(c) 
=
(-1)^{k + n + n/d} \cdot\vep^{d \cdot a(\lambda') + \lfloor n/2 \rfloor} \cdot
\Grfn^{T_n}_{T_n}(1, 1) \cdot  \sum_{\theta \in \phi} \theta(c). 
\end{equation}
As a Levi subgroup of $\GLU_n$, the torus $T_n$ is indexed by $\{(n, 1)\}$, so it follows from Theorem~\ref{thm:ennola} and the discussion that precedes it that the value of the Green function $\Grfn^{T_n}_{T_n}(1, 1)$ is given by the Green polynomial evaluation $Q^{\langle1\rangle}_{\langle1\rangle}( (\vep q)^n)$, and therefore that
\begin{equation}
\label{eq:Singer}
\chi^{\phi, \lambda}(c) = (-1)^{k + n + n/d} \cdot\vep^{d \cdot a(\lambda') + \lfloor n/2 \rfloor} \cdot
Q^{\langle1\rangle}_{\langle1\rangle}((\vep q)^n) \sum_{\theta \in \phi} \theta(c).    
\end{equation}
We now consider summing this expression over all $d$-orbits $\phi$ in $\charorbs$; to that end, define 
\[
S_d(c) := \sum_{\substack{\phi \in \charorbs: \\ |\phi| = d}}
\sum_{\theta \in \phi} \theta(c)
\quad
\text{and} 
\quad
S'_d(c) := \sum_{m \mid d} S_m(c) = \sum_{\substack{\phi \in \charorbs: \\ |\phi| \text{ divides } d}}
\sum_{\theta \in \phi} \theta(c).
\]
By number-theoretic M\"obius inversion, we have 
\begin{equation}\label{eq:mobius inversion}
S_d(c) = \sum_{m \mid d} \mu(d/m) S'_m(c)
\end{equation}
for all divisors $d$ of $n$.  We now focus our attention on the computation of $S'_d(c)$.  
Suppose $c$ is a Singer cycle for $G_X$, with $X$ a multiplicative subgroup of $\GLU_1$ of order $|X| = x$.  Then $c$ has order $x \cdot \frac{q^n - \vep^n}{q - \vep}$, and is the $\big(\frac{q - \vep}{x}\big)$th power of a $\GLU_n$-Singer cycle $c'$.  Therefore $\theta(c) = (\theta(c'))^{(q - \vep)/x}$ for all irreducible characters $\theta$ of $T_n$.  Each such character is determined by the value $\theta(c') = \zeta$, where $\zeta$ is an $(q^n - \vep^n)$th root of unity.
A character $\theta \in \wh{\KK}^\times$ has $\Frob$-orbit size dividing $d$ if and only if $\Frob^d(\theta) = \theta$.  That is, the characters in the sum are those for which $\theta(z) = \theta\bigl(z^{(\vep q)^d}\bigr)$ for all $z \in T_n$, or equivalently for which $\theta\bigl(z^{q^d - \vep^d}\bigr) = 1$.  Thus, they are precisely the characters for which 
$\zeta = \theta(c')$ is actually a $(q^d - \vep^d)$th root of unity.  Therefore 
\[
S'_d(c)  = \sum_{\zeta \in \CC\colon \zeta^{q^d - \vep^d} = 1} \zeta^{(q - \vep)/x}.
\]
Since $\frac{q - \vep}{x} \mid q^d - \vep^d$, this can be rewritten as
\[
S'_d(c)  = \frac{q - \vep}{x} \cdot \Bigg( \sum_{\zeta \in \CC \colon \zeta^{ x \cdot \frac{q^d - \vep^d}{q - \vep}} = 1} \zeta \Bigg)
= \begin{cases}
    \frac{q - \vep}{x} & \text{ if } x \cdot \frac{q^d - \vep^d}{q - \vep} = 1, \\
    0 & \text{o.w.}
\end{cases}
\]
Of these two possibilities, the second holds in almost all cases: in order to have $x \cdot \frac{q^d - \vep^d}{q - \vep} = 1$, it must be that $x = 1$ and $q^d - \vep^d = q - \vep$.  This happens when $d = 1$ (for any $q$ and $\vep$) and when $q = d = 2$, $\vep = -1$, but not otherwise (since if $q > 2$ or $d > 2$ then $q^d - q > 2 \geq \vep^d - \vep$).  Moreover, the combination $d = 2$, $\vep = -1$ does not occur, since in this case $n$ is even and there is no Singer cycle $c$ in $\SL^{-}_n = \SU_n(q)$.  Therefore, we conclude
\[
S'_d(c) = \begin{cases}
    \frac{q - \vep}{x} & \text{ if } x = d = 1, \\
    0 & \text{o.w.},
\end{cases}
\]
and furthermore, by \eqref{eq:mobius inversion}, that
\[
S_d(c) = \begin{cases}
    (q - \vep) \cdot \mu(d) & \text{ in } \SLU_n, \\
    0 & \text{ in all other groups}.
\end{cases}
\]
The result follows immediately after combining this with \eqref{eq:Singer}.
\end{proof}

\subsection{Character values on transvections}

As discussed in Section~\ref{sec:conjugacy classes}, the transvections are the unipotent elements with Jordan form indexed by the partition $\langle 2, 1^{n-2}\rangle$.  Let $t$ be a transvection, let $\chi = \chi^{\phi, \lambda}$ be a primary irreducible character, and let $d= |\phi|$, so that $\lambda \vdash n/d$.  
Since $t$ is unitary, in the Jordan decomposition $t = su$ we have $s = 1$ and $u = t$.  Thus, by \eqref{eq:primary char} and \eqref{eq:induction}, we have
\begin{multline*}
\chi^{\phi, \lambda}(t) = (-1)^{n + n/d} \cdot\vep^{d \cdot a(\lambda') + \lfloor n/2 \rfloor} \cdot
\sum_{\gamma \vdash n/d} \frac{\omega^\lambda(\gamma)}{z_\gamma \cdot |\TTT_{d\gamma}| \cdot |C_{\GLU_n}(1)|} 
\times{} 
\\
{}\times
\sum_{h \in \GLU_n: 1 \in {^h \TTT_{d\gamma}}} |C_{^h\TTT_{d\gamma}}(1)| 
\sum_{v \in \left(C_{^h\TTT_{d\gamma}}(1)\right)_\text{unip.}} \Grfn^{C_{\GLU_n}(1)}_{C_{^h\TTT_{d\gamma}}(1)}(t, v^{-1}) \cdot ({^h \theta})(v).
\end{multline*}
Since $\TTT_{d\gamma}$ consists exclusively of semisimple elements, the innermost sum has a unique term $v = 1$; and since all centralizers that appear in this formula are of the identity, it simplifies to
\begin{multline*}
\chi^{\phi, \lambda}(t) = (-1)^{n + n/d} \cdot\vep^{d \cdot a(\lambda') + \lfloor n/2 \rfloor} \cdot
 \sum_{\gamma \vdash n/d} \frac{\omega^\lambda(\gamma)}{z_\gamma \cdot |\TTT_{d\gamma}| \cdot |\GLU_n|} 
\times{} \\ {}\times 
 \sum_{h \in \GLU_n} |\TTT_{d\gamma}| \cdot \Grfn^{\GLU_n}_{^h\TTT_{d\gamma}}(t, 1) \cdot ({^h \theta})(1).
\end{multline*}
Since $\theta$ is an irreducible character of an abelian group $\TTT_{d\gamma}$, it has dimension $1$, so $({^h\theta})(1) = 1$ and 
\[
\chi^{\phi, \lambda}(t) = (-1)^{n + n/d} \cdot\vep^{d \cdot a(\lambda') + \lfloor n/2 \rfloor} \cdot
 \sum_{\gamma \vdash n/d} \frac{\omega^\lambda(\gamma)}{z_\gamma \cdot |\GLU_n|} \sum_{h \in \GLU_n} \Grfn^{\GLU_n}_{^h\TTT_{d\gamma}}(t, 1).
\]
By Theorem~\ref{thm:ennola} and the discusson that precedes it, the Green function evaluation $\Grfn^{\GLU_n}_{^h\TTT_{d\gamma}}(t, 1)$ is equal to $Q^{\langle 2, 1^{n - 2}\rangle}_{d\gamma}( \vep q)$, and so
\begin{equation}
\label{eq:transvections}
\chi^{\phi, \lambda}(t) =(-1)^{n + n/d} \cdot\vep^{d \cdot a(\lambda') + \lfloor n/2 \rfloor} \cdot
 \sum_{\gamma \vdash n/d} \frac{\omega^\lambda(\gamma)}{z_\gamma } Q^{\langle 2, 1^{n - 2}\rangle}_{d\gamma}( \vep q).
\end{equation}
This immediately leads to the following formula.
\begin{cor}\label{cor:transvections}
Fix a positive integer $n$, divisor $d \mid n$, and partition $\lambda \vdash n/d$.  Let
\[
R_{n, \lambda}(z) := \frac{(z^n - 1)(z^{n - 1} - 1)}{z - 1} \cdot \sum_{\gamma \vdash n/d} \frac{\omega^\lambda(\gamma)}{z_\gamma } Q^{\langle 2, 1^{n - 2}\rangle}_{d\gamma}( z).
\]
Then for any sign $\vep$ and any character orbit $\phi \in \charorbs$ of size $d$, we have
\[
\sum_{t \text{ transvection in } \GLU_n} \chi^{\phi, \lambda}(t) = (-1)^{n + n/d} \cdot\vep^{d \cdot a(\lambda') + \lfloor n/2 \rfloor} \cdot
 R_{n, \lambda}(\vep q).
\]
\end{cor}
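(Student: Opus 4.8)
The plan is to notice that the essential computation has already been carried out in equation~\eqref{eq:transvections}, which gives the value $\chi^{\phi,\lambda}(t)$ for a single transvection $t$; it remains only to sum this value over all transvections in $\GLU_n$. First I would invoke the fact recorded in Section~\ref{sec:conjugacy classes} that the transvections form a \emph{single} conjugacy class in $\GLU_n$, namely the class indexed by the multipartition $\big\{(z-1, \langle 2, 1^{n-2}\rangle)\big\}$. Since $\chi^{\phi,\lambda}$ is a class function, it is constant on this class, taking the value displayed in \eqref{eq:transvections} at every transvection $t$. Hence the sum in the statement is that common value multiplied by the number of transvections.

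Next I would recall from Section~\ref{sec:centralizers} that the number of transvections in $\GLU_n$ equals $\dfrac{((\vep q)^n - 1)((\vep q)^{n-1} - 1)}{\vep q - 1}$. Multiplying this count by the value in \eqref{eq:transvections} yields
\begin{multline*}
\sum_{t \text{ transvection in } \GLU_n} \chi^{\phi,\lambda}(t) = (-1)^{n+n/d}\,\vep^{d\cdot a(\lambda') + \lfloor n/2 \rfloor} \cdot \frac{((\vep q)^n - 1)((\vep q)^{n-1} - 1)}{\vep q - 1} \times {} \\ {} \times \sum_{\gamma \vdash n/d} \frac{\omega^\lambda(\gamma)}{z_\gamma}\, Q^{\langle 2, 1^{n-2}\rangle}_{d\gamma}(\vep q).
\end{multline*}
Comparing with the definition of the polynomial $R_{n,\lambda}(z)$ given in the statement, the product of the last two factors on the right is precisely $R_{n,\lambda}(z)$ evaluated at $z = \vep q$, and the claimed identity follows.

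There is essentially no obstacle: the real content lies in the previously established formula \eqref{eq:transvections} and in the transvection count, and the remainder is bookkeeping. The only point deserving a word is the degenerate case $n = 1$, where there are no transvections (so the left side is an empty sum equal to $0$) and the expression $\langle 2, 1^{n-2}\rangle$ is formally meaningless; but here the right side also vanishes, since the factor $z^{n-1} - 1 = z^{0} - 1 = 0$ forces $R_{1,\lambda}\equiv 0$, so the identity holds vacuously. For $n \geq 2$ every step above is literal.
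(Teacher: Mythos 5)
Your proposal is correct and matches the paper's own proof: both multiply the single-transvection value from \eqref{eq:transvections} by the transvection count from Section~\ref{sec:centralizers}, using that transvections form one conjugacy class. The extra remark on the degenerate case $n=1$ is fine but not needed.
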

\begin{proof}
Since the transvections form a single conjugacy class in $\GLU_n$, we have for each character $\chi$ that 
\[
\sum_{t \text{ transv.\ in } \GLU_n}\chi(t) = \#\{\text{transvections in } \GLU_n\} \cdot \chi(t)
\]
where on the right side $t$ represents any single transvection.  As mentioned in Section~\ref{sec:centralizers}, the number of transvections in $\GLU_n$ is 
\[
\left. \frac{(z^n - 1)(z^{n - 1} - 1)}{z - 1} \right|_{z \mapsto \vep q},
\]
and the result follows after combining this with \eqref{eq:transvections}.
\end{proof}

\subsection{Ennola duality in the determinant-\texorpdfstring{$1$}{1} case}

In this section, we prove a first part of our main theorems: the Ennola duality in the special groups.  Suppose $|X| = 1$, so that $G_X = \SLU_n$ and the only reflections in $G_X$ are the transvections.

\begin{theorem}\label{thm:SLU Ennola}
    Fix an odd positive integer $n$ and a nonnegative integer $\ell$.  There is a rational function $P := P_{n, \ell}$ with the property that for any prime power $q$, the number of reflection factorizations of length $\ell$ of a Singer cycle in $\SLnFq$ is $P(q)$ and the number of reflection factorizations of length $\ell$ of a Singer cycle in $\SU_n(q)$ is $P(-q)$.
\end{theorem}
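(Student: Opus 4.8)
The plan is to chain together the three computations already in hand---Corollary~\ref{cor:sparse} for the Frobenius character sum, Proposition~\ref{prop:Singer sum} for the values of primary characters at $c^{-1}$, and Corollary~\ref{cor:transvections} for the sums of primary characters over transvections---while tracking every factor of the sign $\vep$, and then to observe that, because $n$ is odd, all of these $\vep$'s cancel, so that the resulting closed form is literally a rational function evaluated at $\vep q$.

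First I would specialize Corollary~\ref{cor:sparse} to $X=\{1\}$, where $G_X=\SLU_n$ and the reflections of $G_X$ are exactly the transvections, which form a single conjugacy class of $\GLU_n$; thus $\sum_{t\text{ refn.\ in }G_X}\chi(t)=\sum_{t\text{ transvection in }\GLU_n}\chi(t)$. The structural point is that in the resulting triple sum both $\deg(\chi^{\phi,\hook k{n/d}})^{1-\ell}$ (by Proposition~\ref{prop:degrees}) and $\bigl(\sum_{t}\chi^{\phi,\hook k{n/d}}(t)\bigr)^{\ell}$ (by Corollary~\ref{cor:transvections}) depend on the orbit $\phi$ only through its size $d=|\phi|$, so they may be pulled outside the sum over orbits of a given size:
\[
\Fac^{\{1\}}_\ell(c)=\frac{1}{|\GLU_n|}\sum_{d\mid n}\sum_{k=0}^{n/d-1}\deg\!\bigl(\chi^{\phi,\hook k{n/d}}\bigr)^{1-\ell}\biggl(\sum_{\substack{\phi\in\charorbs\\ |\phi|=d}}\chi^{\phi,\hook k{n/d}}(c^{-1})\biggr)\biggl(\sum_{t\text{ transv.}}\chi^{\phi,\hook k{n/d}}(t)\biggr)^{\ell},
\]
where on the outside $\phi$ denotes any fixed orbit of size $d$. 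Since $c^{-1}$ is again a Singer cycle of $\SLU_n$, the inner sum over orbits is precisely the quantity evaluated in Proposition~\ref{prop:Singer sum}; in the case $G=\SLU_n$ it equals $\mu(d)\,(q-\vep)\,Q^{\langle 1\rangle}_{\langle 1\rangle}\bigl((\vep q)^n\bigr)$ times the explicit sign recorded there (and $Q^{\langle 1\rangle}_{\langle 1\rangle}\equiv 1$).

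Next I would substitute the closed forms from Propositions~\ref{prop:degrees} and~\ref{prop:Singer sum}, Corollary~\ref{cor:transvections}, and the cardinality formula~\eqref{eq:GLU cardinality}. Each ingredient has the shape (a sign $\pm\vep^{\ast}$) times a rational function of the single quantity $\vep q$: indeed $f^{d,\lambda}$, $R_{n,\lambda}$, $(\vep q)^{\binom n2}$, $(\vep q-1)^{n}$, $[n]!_{\vep q}$ and $Q^{\langle1\rangle}_{\langle1\rangle}((\vep q)^n)$ are all honest rational functions of $\vep q$. The only thing that could obstruct reading off a value $P(\vep q)$ is the total accumulated power of $\vep$. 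Collecting it from $1/|\GLU_n|$, from $\deg(\chi)^{1-\ell}$, from the Singer sum, and from the $\ell$th power of the transvection sum---and using $\vep^{-n}=\vep^{n}$, the congruence $\binom n2\equiv\lfloor n/2\rfloor\pmod 2$ noted after Proposition~\ref{prop:degrees}, and the identity $q-\vep=\vep(\vep q-1)$ (which converts the stray factor $q-\vep$ into $\vep$ times a polynomial in $\vep q$)---one finds that the exponent of $\vep$ is congruent mod $2$ to $n+1$, hence even because $n$ is odd. Therefore the $\vep$'s cancel entirely, and one is left with $\Fac^{\{1\}}_\ell(c)=P(\vep q)$, where
\[
P(z):=\frac{1}{z^{\binom n2}(z-1)^{n}\,[n]!_z}\sum_{d\mid n}\mu(d)\,(z-1)\sum_{k=0}^{n/d-1}(-1)^{\,k+(n+n/d)(1+\ell)}\,f^{d,\hook k{n/d}}(z)^{1-\ell}\,R_{n,\hook k{n/d}}(z)^{\ell}
\]
is a rational function of $z$ with coefficients depending only on $n$ and $\ell$. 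Setting $\vep=+$ identifies the number of length-$\ell$ reflection factorizations of a Singer cycle in $\SLnFq$ with $P(q)$, and setting $\vep=-$ identifies the number in $\SU_n(q)$ with $P(-q)$, which is the assertion.

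I expect the only genuine obstacle to be the sign bookkeeping in the previous paragraph: none of the three reductions is itself difficult, but one must check carefully that the $\vep$-exponents contributed by $1/|\GLU_n|$, by $\deg(\chi)^{1-\ell}$, by the Singer sum, by the $\ell$th power of the transvection sum, and by the substitution $q-\vep\mapsto\vep(\vep q-1)$ really do add up to an even number---this is exactly where oddness of $n$ enters---and that the surviving $(-1)$-signs depend only on $n$, $d$, $k$, $\ell$ and not on $q$, so that a single rational function $P$ serves all prime powers simultaneously. One minor caveat: for $n=3$, $q=2$ the group $\SU_3(2)$ contains no Singer cycle, so in that corner the unitary half of the statement is vacuous (alternatively one reads the count there as the value of the character-theoretic formula, which remains well defined).
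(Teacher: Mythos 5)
Your proposal is correct and follows essentially the same route as the paper's own proof: specialize Corollary~\ref{cor:sparse} to $X=\{1\}$, feed in Proposition~\ref{prop:degrees}, Corollary~\ref{cor:transvections}, Proposition~\ref{prop:Singer sum} (via the fact that $c^{-1}$ is again a Singer cycle), use the cardinality formula~\eqref{eq:GLU cardinality}, and check that the total $\vep$-exponent is $n+1$, hence even for $n$ odd. Your sign bookkeeping and the resulting explicit rational function match the paper's equation~\eqref{eq:SL-fac-second-simplification}, so no further comment is needed.
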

\begin{proof}
Let $c$ be a Singer cycle in $\SLU_n$. By Corollary~\ref{cor:transvections}, we have that
\[
\sum_{\substack{t \text{ a reflec-} \\\text{tion in } \SLU_n}}  \hspace{-1em} \chi^{\phi, \lambda}(t) = 
\sum_{\substack{t \text{ a transvec-}\\\text{tion in } \GLU_n}} \hspace{-1em} \chi^{\phi, \lambda}(t) = (-1)^{n + n/d} \cdot\vep^{d \cdot a(\lambda') + \lfloor n/2 \rfloor} \cdot
 R_{n, \lambda}(\vep q).
\]
Up to the sign, this is an evaluation of a polynomial function at $\vep q$.  Moreover, while both the sign and the polynomial depend on the size $d$ of the orbit $\phi$ (as well as on the partition $\lambda$), they do not otherwise depend on $\phi$.
Similarly, we have by Proposition~\ref{prop:degrees} that the degree
\[
\deg(\chi^{\phi, \lambda}) = 
\vep^{d \cdot a(\lambda') + \binom{n}{2}} \cdot f^{d, \lambda}(\vep q)
\]
depends on $d$ but not otherwise on $\phi$ and is (up to the sign) an evaluation of a polynomial function at $\vep q$.
Thus, we can rewrite the formula provided by Corollary~\ref{cor:sparse} for the factorization counts in this case as
\begin{multline}\label{eq:SL-fac-first-simplification}
\Fac^{\{1\}}_\ell(c) = \frac{1}{|\GLU_n|} \sum_{d \mid n}  \sum_{k = 0}^{n/d - 1} \left(
\vep^{d \cdot \binom{n/d - k}{2} + \binom{n}{2}} \cdot f^{d, \hook{k}{n/d}}(\vep q)
\right)^{1 - \ell}  
 \times{}
 \\
 {}\times
\left( 
(-1)^{n + n/d} \cdot\vep^{d \cdot\binom{n/d - k}{2} + \lfloor n/2 \rfloor}
\cdot R_{d, \hook{k}{n/d}}(\vep q)\right)^\ell 
\cdot 
\sum_{\substack{\phi \in \charorbs: \\ |\phi| = d}} \chi^{\phi, \hook{k}{n/d}}(c^{-1}).
\end{multline}

As observed in the proof of Corollary~\ref{cor:sparse}, the element $c^{-1}$ is also a Singer cycle of $\SLU_n$.  Therefore, we can apply Proposition~\ref{prop:Singer sum} to the innermost sum in \eqref{eq:SL-fac-first-simplification} to get that
\begin{multline}\label{eq:SL-fac-second-simplification}
\Fac^{\{1\}}_\ell(c) = \frac{1}{|\GLU_n|} \sum_{d \mid n}  \sum_{k = 0}^{n/d - 1} \left(
\vep^{d \cdot \binom{n/d - k}{2} + \binom{n}{2}} \cdot f^{d, \hook{k}{n/d}}(\vep q)
\right)^{1 - \ell}   
 \times{}
 \\
 {}\times
\left( 
(-1)^{n + n/d} \cdot\vep^{d \cdot\binom{n/d - k}{2} + \lfloor n/2 \rfloor}
\cdot R_{d, \hook{k}{n/d}}(\vep q)\right)^\ell 
 \times{}
 \\
 {}\times
(-1)^{k + n + n/d} \cdot\vep^{d \cdot \binom{n/d - k}{2} + \lfloor n/2 \rfloor + 1}
\cdot Q^{\langle1\rangle}_{\langle1\rangle}((\vep q)^n) \cdot (\vep q - 1) \cdot \mu(d) \\
= \frac{\vep^{n + 1}}{ (\vep q)^{\binom{n}{2}} \prod_{i = 1}^n (( \vep q)^i - 1)} 
\sum_{d \mid n}  \sum_{k = 0}^{n/d - 1} \left(
 f^{d, \hook{k}{n/d}}(\vep q)
\right)^{1 - \ell}   
 \times{}
 \\
 {}\times
\left( 
(-1)^{n + n/d} 
\cdot R_{d, \hook{k}{n/d}}(\vep q)\right)^\ell 
 \cdot
(-1)^{k + n + n/d} 
\cdot Q^{\langle1\rangle}_{\langle1\rangle}((\vep q)^n) \cdot (\vep q - 1) \cdot \mu(d)
\end{multline}
for some polynomials $R_{d, \hook{k}{n/d}}$, $f^{d, \hook{k}{n/d}}$, and $Q^{\langle1\rangle}_{\langle1\rangle}$.  
Since $n$ is odd, $\vep^{n + 1} = 1$, and the result follows immediately.
\end{proof}

\subsection{Explicit enumeration in the determinant-\texorpdfstring{$1$}{1} case}

In the case $n = 1$, the group $\SLU_n$ is trivial; rather than worrying about whether its unique element is simultaneously the identity, a reflection, and a Singer cycle, we exclude that case from the theorem statement.

\begin{theorem}\label{thm:SL-enumeration-arbitrary-length}
    Fix integers $1 < n \leq \ell$.  If $n$ and $\vep$ are such that $\SLU_n$ contains a Singer cycle, then the number of factorizations of a Singer cycle in $\SLU_n$ as a product of $\ell$ reflections is
    \[
    [n]_{\vep q}^{\ell - 1} \cdot \sum_{i = 0}^{\ell - n} (-1)^i \binom{\ell}{i} \qbin{\ell - i - 1}{n - 1}{\vep q}.
    \]
\end{theorem}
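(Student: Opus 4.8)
The plan is to take the formula for $\Fac^{\{1\}}_\ell(c)$ already obtained in the proof of Theorem~\ref{thm:SLU Ennola} — the last display inside \eqref{eq:SL-fac-second-simplification} — and turn it, in three steps, into the asserted $q$-binomial expression: (1) reduce it to an identity of rational functions in the single variable $u:=\vep q$; (2) make its three ingredients ($Q^{\langle1\rangle}_{\langle1\rangle}$, the degrees $f^{d,\lambda}$, and the transvection sums $R_{n,\lambda}$) completely explicit as products of $q$-integers; and (3) prove the resulting identity using the $q$-binomial theorem together with M\"obius inversion over the divisors of $n$. For step (1): since $\SLU_n$ contains a Singer cycle, $n$ is odd when $\vep=-$, and only squarefree $d\mid n$ contribute in \eqref{eq:SL-fac-second-simplification} because of the factor $\mu(d)$; for such $d$ the integer $n/d$ is odd, so $n+n/d$ is even and every occurrence of $(-1)^{n+n/d}$ is $1$, while $Q^{\langle1\rangle}_{\langle1\rangle}\equiv 1$, $\vep^{n+1}=1$, and by \eqref{eq:GLU cardinality} the prefactor together with the factor $(\vep q-1)$ from Proposition~\ref{prop:Singer sum} collapses to $\bigl(u^{\binom n2}(u-1)^{n-1}[n]!_u\bigr)^{-1}$. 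Writing $\beta_{d,k}(u):=R_{n,\hook k{n/d}}(u)/f^{d,\hook k{n/d}}(u)$ for the \emph{normalized value of the primary character $\chi^{\phi,\hook k{n/d}}$ on a transvection} (independent of the size-$d$ orbit $\phi$, by Corollary~\ref{cor:transvections} and Proposition~\ref{prop:degrees}) and using $f^{1-\ell}R^\ell=f\cdot\beta^\ell$, the claim becomes the rational-function identity
\[
\frac{1}{u^{\binom n2}(u-1)^{n-1}[n]!_u}\sum_{d\mid n}\mu(d)\sum_{k=0}^{n/d-1}(-1)^k\,f^{d,\hook k{n/d}}(u)\,\beta_{d,k}(u)^\ell
=
[n]_u^{\ell-1}\sum_{i=0}^{\ell-n}(-1)^i\binom\ell i\qbin{\ell-i-1}{n-1}{u}.
\]

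For step (2), the degree $f^{d,\hook k{n/d}}(u)$ is immediate from the hook-length formula: the diagram of $\langle n/d-k,1^k\rangle$ has hook lengths $n/d$, then $1,\dots,n/d-1-k$ (arm) and $1,\dots,k$ (leg), with $a$-statistic $\binom{k+1}2$, so $f^{d,\hook k{n/d}}(u)=u^{d\binom{k+1}2}\prod_{i=1}^{n-1}(u^i-1)\big/\bigl(\prod_{j=1}^{n/d-1-k}(u^{dj}-1)\prod_{j=1}^{k}(u^{dj}-1)\bigr)$, which is $u^{\binom{k+1}2}\qbin{n-1}{k}{u}$ when $d=1$. The real work is $\beta_{d,k}$. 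For $d=1$ I would compute the $\GLnFq$ transvection values directly: the hook unipotent characters $\chi^{\phi,\langle n-k,1^k\rangle}$ occur in parabolically induced characters, so $\chi^{\phi,\langle n-k,1^k\rangle}(t)$ can be read off by counting the flags (or subspaces) fixed by a transvection $t$, giving $\chi^{\phi,\langle n-k,1^k\rangle}(t)=u^{\binom{k+1}2}\qbin{n-2}{k}{u}$ and hence, after multiplying by the transvection count $(u^n-1)(u^{n-1}-1)/(u-1)$ from Section~\ref{sec:centralizers} and dividing by the degree, $\beta_{1,k}(u)=[n]_u\,(u^{\,n-1-k}-1)$. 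For $d>1$ the analogous evaluation must be extracted from the Green-polynomial sum $\sum_{\gamma\vdash n/d}z_\gamma^{-1}\omega^{\hook k{n/d}}(\gamma)\,Q^{\langle 2,1^{n-2}\rangle}_{d\gamma}(u)$ of Corollary~\ref{cor:transvections}, using the Ennola/base-change behaviour of Green polynomials at $d$-divisible partitions (Theorem~\ref{thm:ennola}, together with the Coxeter-torus value $Q^{\langle 2,1^{N-2}\rangle}_{(N)}(u)=\pm\prod_{i=1}^{N-2}(u^i-1)$); the outcome I expect is that $\beta_{d,k}(u)$ is again $\pm[n]_u$ times an explicit product of $q$-integers, the factor $[n]_u$ surviving for every $d$ and $k$ — which is exactly what forces the exponent $\ell-1$ rather than $\ell$ on the right-hand side.

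For step (3): substituting the $d=1$ part and expanding $\beta_{1,k}(u)^\ell=[n]_u^\ell(u^{\,n-1-k}-1)^\ell=[n]_u^\ell\sum_{i}(-1)^{\ell-i}\binom\ell i u^{\,i(n-1-k)}$ is precisely what produces the binomial coefficients $(-1)^i\binom\ell i$ of the target. Interchanging the $k$- and $i$-sums, the inner sum over $k$ has the form $u^{\,i(n-1)}\sum_{k=0}^{n-1}(-1)^k u^{\binom k2}(u^{1-i})^k\qbin{n-1}{k}{u}$, which by the $q$-binomial theorem equals $u^{\,i(n-1)}\prod_{j=1-i}^{\,n-1-i}(1-u^{j})$; this vanishes for $1\le i\le n-1$ (the factor $1-u^0$), and for $i\ge n$ it collapses — after a short exponent computation that reduces the power of $u$ to $u^{\binom n2}$ — to a constant multiple of $u^{\binom n2}(u-1)^{n-1}[n-1]!_u\qbin{i-1}{n-1}{u}$, while the $i=0$ term leaves a single spurious monomial $\pm(-1)^\ell u^{-\binom n2}[n]_u^{\ell-1}$. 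Reindexing by $j=\ell-i$, the $d=1$ contribution is therefore exactly the target right-hand side plus that spurious term, and it remains to show $\sum_{d\mid n,\,d>1}\mu(d)(\cdots)$ contributes precisely $-(-1)^\ell u^{-\binom n2}[n]_u^{\ell-1}$, cancelling it: for $n$ prime this is just the lone $d=n$ summand (whose $k$-sum is a single term), and for general $n$ it is one more $q$-binomial/M\"obius identity of the same flavor.

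\textbf{Main obstacle.} The crux is the $d>1$ half of step (2) — obtaining the primary-character values on transvections, equivalently the Green-polynomial sums at $d$-divisible partitions, in closed form — and then running the matching $d>1$ cancellation in step (3). By comparison, the reduction in step (1), the hook-length evaluation, and the $d=1$ portion of step (3) are essentially bookkeeping once those character values are in hand.
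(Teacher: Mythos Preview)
Your plan is essentially the paper's own argument.  The ``main obstacle'' you flag --- the $d>1$ transvection values --- is much simpler than you expect: the paper cites \cite[Eq.~(5.5)]{LRS}, which gives $\beta_{d,k}(q)=-[n]_q$ for every $d>1$ and every $k$ (no residual product of $q$-integers).  With this in hand the $d>1$ contribution collapses exactly as you predict: the inner $k$-sum is $(q^d;q^d)_{n/d-1}$ by the $q$-binomial theorem, the remaining $\sum_{1<d\mid n}\mu(d)$ is $-1$, and this cancels the spurious $(-1)^{\ell}$ term left over from the $d=1$ part.  For the $d=1$ part the paper packages your binomial expansion as an application of the iterated $q$-difference operator $\Delta_q^{n-1}$ to $z^{-1}\bigl((z-1)^\ell-(-1)^\ell\bigr)$ at $z=1$; the two computations are equivalent.

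One organizational difference: the paper does the calculation for $\vep=+$ (where the \cite{LRS} values are available off the shelf) and then invokes Theorem~\ref{thm:SLU Ennola} to transfer the resulting polynomial identity to $\vep=-$.  Your choice to work directly in $u=\vep q$ is cleaner in spirit, but be careful with your sign bookkeeping: your claim that ``$n+n/d$ is even'' is valid only when $n$ is odd (the unitary constraint), whereas for $\SL_n(q)$ with $n$ even the signs $(-1)^{n+n/d}$ in \eqref{eq:SL-fac-second-simplification} genuinely depend on $d$ and must be tracked.  The paper avoids this by starting from the explicit \cite{LRS} values rather than from the generic signed expression.
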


\begin{proof}
We begin with the special linear group (i.e., in the case that $\vep = +$), starting from Proposition~\ref{cor:sparse}.\footnote{One could also carry out similar calculations beginning from \eqref{eq:SL-fac-second-simplification}; we instead take advantage of explicit character values that already exist in the literature.}
By \cite[Eq.~(5.5)]{LRS}, we have
\[
\sum_{t \text{ refn.~in } \SLnFq} \chi^{\phi, \hook{k}{n/d}}(t) = \deg(\chi^{\phi, \hook{k}{n/d}}) \cdot [n]_q \cdot \begin{cases} q^{n - k - 1} - 1 & \text{ if } d = 1 \\ -1 & \text{ if } d > 1\end{cases}.
\]
By \cite[III, 7, Ex.~1]{Macdonald}, the Green polynomial $Q^{\langle1\rangle}_{\langle1\rangle}(q)$ is the constant polynomial $1$, so by Proposition~\ref{prop:Singer sum}, 
\[
\sum_{\phi \in \charorbs: |\phi| = d} \chi^{\phi, \hook{k}{n/d}}(c)
= (-1)^{k + n + n/d} 
\cdot (q - 1) \cdot \mu(d).
\]
Plugging these values into the formula from Proposition~\ref{cor:sparse} gives
\begin{multline*}
\Fac^{\{1\}}_\ell(c) = 
\frac{1}{|\GL^+_n|} \sum_{d \mid n}  \sum_{k = 0}^{n/d - 1} f^{d, \hook{k}{n/d}}\left( q\right)
 \times{}
 \\
 {}\times
\left( [n]_q \cdot \begin{cases} q^{n - k - 1} - 1 & \text{if } d = 1 \\ -1 & \text{o.w.}\end{cases} \right)^\ell 
\cdot (-1)^{k + n + n/d} \cdot (q - 1) \cdot \mu(d).
\end{multline*}
It is convenient at this point to introduce the \defn{$q$-Pochhammer symbol}
\[
(a; q)_n := (1 - a)(1 - aq)(1 - qa^2) \cdots (1 - aq^{n - 1}).
\]
With this notation, the formula for the degree $f^{d, \hook{k}{n/d}}(q)$ in Proposition~\ref{prop:degrees} can be written as
\[
f^{d, \hook{k}{n/d}}(q) = (-1)^{n - n/d}q^{d \cdot \binom{k + 1}{2}} \cdot  \frac{(q; q)_{n}}{(q^d; q^d)_{n/d}} \cdot \qbin{n/d - 1}{k}{q^d},
\]
and $|\GL^+_n| = (-1)^n q^{\binom{n}{2}} (q; q)_n$,
whence
\begin{multline*}
\Fac^{\{1\}}_\ell(c) = \frac{(q - 1)[n]_q^\ell }{(-1)^n q^{\binom{n}{2}}} \sum_{k = 0}^{n - 1} (-1)^{k} \frac{q^{\binom{k + 1}{2}}}{(q; q)_{n}} \qbin{n - 1}{k}{q} \left( q^{n - k - 1} - 1 \right)^\ell  + {} \\
 {} + \left( -1 \right)^\ell \frac{(q - 1)[n]_q^\ell }{(-1)^n q^{\binom{n}{2}}} \sum_{1 < d \mid n} \frac{\mu(d)}{(q^d; q^d)_{n/d}}
 \sum_{k = 0}^{\frac{n}{d} - 1} (-1)^{k} q^{d\binom{k + 1}{2}}\qbin{n/d - 1}{k}{q^d}. 
\end{multline*}
By the $q$-binomial theorem \cite[p.~25, Exer.~1.2(vi)]{GasperRahman},
\[
\sum_{k = 0}^{\frac{n}{d} - 1} (-1)^{k} q^{d\binom{k + 1}{2}} \qbin{n/d - 1}{k}{q^d} = (q^d; q^d)_{\frac{n}{d} - 1}.
\]
Substituting this into the previous equation, using the identity $\sum_{1 < d \mid n} \mu(d) = -1$ for any integer $n > 1$, and simplifying gives 
\begin{align}
\hspace{15pt} & \hspace{-15pt} \Fac^{\{1\}}_\ell(c) = {} \notag \\
& = \frac{(q - 1)[n]_q^\ell }{(-1)^n q^{\binom{n}{2}}}
 \sum_{k = 0}^{n - 1} (-1)^{k} \frac{q^{\binom{k + 1}{2}}}{(q; q)_{n}} \qbin{n - 1}{k}{q} \left( q^{n - k - 1} - 1 \right)^\ell  +
  \frac{(q - 1)[n]_q^\ell }{(-1)^{\ell - n} q^{\binom{n}{2}}} \sum_{1 < d \mid n} 
  \frac{ \mu(d)}{(1 - q^n)} \notag \\
  & = \frac{(q - 1)[n]_q^\ell }{(-1)^n q^{\binom{n}{2}}}
 \sum_{k = 0}^{n - 1} (-1)^{k} \frac{q^{\binom{k + 1}{2}}}{(q; q)_{n}} \qbin{n - 1}{k}{q} \left( q^{n - k - 1} - 1 \right)^\ell  +
  \frac{(q - 1)[n]_q^\ell }{(-1)^{\ell - n} q^{\binom{n}{2}}}  \cdot 
  \frac{ -1}{(1 - q^n)}   \notag\\
& = \frac{[n]_q^{\ell - 1} }{ q^{\binom{n}{2}}}
 \sum_{k = 0}^{n - 1} (-1)^{n - k - 1} \frac{q^{\binom{k + 1}{2}}}{(q; q)_{n - 1}} \qbin{n - 1}{k}{q} \left( q^{n - k - 1} - 1 \right)^\ell  + (-1)^{\ell - n}
  \frac{[n]_q^{\ell - 1} }{q^{\binom{n}{2}}}. \label{eq:last formula}
\end{align}
We now show that this is equal to the desired answer.  Define the \defn{$q$-difference operator} $\Delta_q$, a linear operator acting on functions of a single variable $z$, via
\[
(\Delta_q f)(z) := \frac{f(z) - f(qz)}{z - qz}.
\]
It is easy to prove by induction (using the $q$-Pascal recurrence for $q$-binomial coefficients) that iteratively applying $\Delta_q$ $N$ times gives
\begin{equation}
\label{eq:q-diff-iterate-general}    
(\Delta_q^N f)(z) = \frac{1}{q^{\binom{N}{2}} z^N (q - 1)^N} \sum_{k = 0}^N (-1)^k q^{\binom{k}{2}} \qbin{n}{k}{q} f(q^{n - k}z),
\end{equation}
and also that 
\begin{equation}
\label{eq:q-diff-iterate-power}
\left.\Delta_q^N (z^a) \right|_{z = 1} = \frac{(q^{a - N + 1}; q)_N}{(1-q)^N}.
\end{equation}
Observe particularly that this latter expression is $0$ if $a$ is an integer and $0 \leq a < N$.

Now consider applying $\Delta_q^{n - 1}$ to the polynomial function $f(z) = \frac{(z -1)^\ell - (-1)^\ell}{z}$ and then evaluating at $z = 1$.  On one hand, we can write
\[
\left.\Delta_q^{n - 1}\left(\frac{(z - 1)^\ell - (-1)^\ell}{z}\right)\right|_{z = 1} = \left.\Delta_q^{n - 1}\left(\frac{(z - 1)^\ell}{z}\right)\right|_{z = 1} - \left.\Delta_q^{n - 1}\left(\frac{(- 1)^\ell}{z}\right)\right|_{z = 1}.
\]
The first term can be evaluated by \eqref{eq:q-diff-iterate-general}, yielding
\begin{align*}
\left.\Delta_q^{n - 1}\left(\frac{(z - 1)^\ell}{z}\right)\right|_{z = 1} 
& = 
\frac{1}{q^{\binom{n - 1}{2}} 1^{n - 1} (q - 1)^{n - 1}} \sum_{k = 0}^{n - 1} (-1)^k q^{\binom{k}{2}} \qbin{n - 1}{k}{q} \frac{(q^{n - 1 - k}\cdot 1 - 1)^\ell}{q^{n - 1 - k}\cdot 1} \\
& = \frac{1}{q^{\binom{n }{2}} (q - 1)^{n - 1}} \sum_{k = 0}^{n - 1} (-1)^k q^{\binom{k + 1}{2}} \qbin{n - 1}{k}{q} (q^{n - 1 - k} - 1)^\ell.
\end{align*}
The second term can be evaluated directly by \eqref{eq:q-diff-iterate-power}, yielding
\[
\left.\Delta_q^{n - 1}\left(\frac{(-1)^\ell}{z}\right)\right|_{z = 1} = (-1)^\ell \frac{(q^{-(n - 1)}; q)_{n - 1}}{(1 - q)^{n - 1}} = (-1)^{\ell} \frac{(q; q)_{n - 1}}{q^{\binom{n}{2}}(q - 1)^{n - 1}}.
\]
Comparing these two formulae with \eqref{eq:last formula}, we see that
\begin{equation}
\label{eq:penultimate SLn calculation}
\Fac^{\{1\}}_\ell(c) = 
\frac{[n]_q^{\ell - 1} (q - 1)^{n - 1}}{(-1)^{n - 1} (q; q)_{n - 1}} \cdot 
\left.\Delta_q^{n - 1}\left(\frac{(z - 1)^\ell - (-1)^\ell}{z}\right)\right|_{z = 1}.
\end{equation}
On the other hand, we can use the (usual) binomial theorem to expand $\frac{(z - 1)^\ell - (-1)^\ell}{z} = \sum_{i = 0}^{\ell - 1} (-1)^{i} \binom{\ell}{i} z^{\ell - i - 1}$; then applying \eqref{eq:q-diff-iterate-power} gives
\[
\left.\Delta_q^{n - 1}\left(\frac{(z - 1)^\ell - (-1)^\ell}{z}\right)\right|_{z = 1} =
\sum_{i = 0}^{\ell - 1} (-1)^{i} \binom{\ell}{i}
\frac{(q^{(\ell - i - 1) - (n - 1) + 1}; q)_{n - 1}}{(1-q)^{n - 1}}.
\]
The summand is $0$ if $\ell - i - 1 < n - 1$, so
\begin{align*}
\left.\Delta_q^{n - 1}\left(\frac{(z - 1)^\ell - (-1)^\ell}{z}\right)\right|_{z = 1} & =
\sum_{i = 0}^{\ell - n} (-1)^{i} \binom{\ell}{i}
\frac{(q^{(\ell - i - n + 1}; q)_{n - 1}}{(1-q)^{n - 1}} \\
& = \frac{(q; q)_{n - 1}}{(1 - q)^{n - 1}} \sum_{i = 0}^{\ell - n} (-1)^{i} \binom{\ell}{i} \qbin{\ell - i - 1}{n - 1}{q}.
\end{align*}
Combining this with \eqref{eq:penultimate SLn calculation} gives precisely the claimed formula in the case $\vep = +$.

We now consider the unitary case, so let $n$ be odd, $\vep = -$, and exclude the case $(n, q) = (3, 2)$.  By Theorem~\ref{thm:SLU Ennola}, there exists a rational function $P$ such that $P(q) = \Fac^{\{1\}}_\ell(q)$ for all prime powers $q$.  The formula just produced is manifestly rational (actually polynomial) in $q$.  A rational function can have only finitely many roots, so two rational functions that agree on infinitely many points are in fact equal as functions, and consequently the only possibility for $P$ is the formula just obtained.  Then the result for $\SU_n(q)$ follows from Theorem~\ref{thm:SLU Ennola}.
\end{proof}

\begin{cor}\label{cor:SL-enumeration-minimum-length}
    Fix an integer $n > 1$.  Suppose that $n$ and $\vep$ are such that $\SLU_n$ contains a Singer cycle $c$, and let $h = |c|$ be its multiplicative order.  Then the number of factorizations of $c$ in $\SLU_n$ as a product of $n$ reflections is $h^{n - 1}$.  In particular, this is $[n]_q^{n - 1} = \left(\dfrac{q^n - 1}{q - 1}\right)^{n - 1}$ when $\vep = +$ and is $[n]_{-q}^{n - 1} = \left(\dfrac{q^n + 1}{q + 1}\right)^{n - 1}$ when $n$ is odd and $\vep = -$.
\end{cor}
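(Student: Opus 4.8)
The plan is to specialize Theorem~\ref{thm:SL-enumeration-arbitrary-length} to the case $\ell = n$ and simplify. When $\ell = n$, the index $i$ in the sum ranges over $0 \le i \le \ell - n = 0$, so only the $i = 0$ term survives; that term equals $(-1)^0 \binom{n}{0} \qbin{n - 1}{n - 1}{\vep q} = 1$, since $\qbin{m}{m}{z} = 1$ identically. Thus the formula of Theorem~\ref{thm:SL-enumeration-arbitrary-length} collapses to $\Fac^{\{1\}}_n(c) = [n]_{\vep q}^{n - 1}$.

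Next I would match $[n]_{\vep q}$ with $h = |c|$ and then unwind the $q$-integer notation in the two cases. For the first step: taking $X = \{1\}$ so that $\SLU_n = G_X$ in the notation of Proposition/Definition~\ref{prop:between SL and GL}, Proposition~\ref{prop:Singer cycles exist} says the order of the Singer cycle $c$ is $|X| \cdot \frac{(\vep q)^n - 1}{\vep q - 1}$, i.e.\ $h = \frac{(\vep q)^n - 1}{\vep q - 1} = [n]_{\vep q}$; hence $\Fac^{\{1\}}_n(c) = h^{n - 1}$. For the second step: when $\vep = +$ this is $[n]_q = \frac{q^n - 1}{q - 1}$ directly from the definition of the $q$-integer, while for $\vep = -$ with $n$ odd one has $(-q)^n = -q^n$, so $[n]_{-q} = \frac{(-q)^n - 1}{-q - 1} = \frac{q^n + 1}{q + 1}$.

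The one point that warrants an explicit line is the tacit assertion that $n$ is the \emph{minimum} number of reflections in a factorization of $c$: a product of $k < n$ reflections fixes pointwise the intersection of their $k$ reflecting hyperplanes, a nonzero subspace, so it cannot be irreducible and in particular cannot be a Singer cycle; combined with the nonvanishing of the count at $\ell = n$ just obtained, this shows $n$ really is the minimum length. I do not anticipate any substantive obstacle beyond this bookkeeping, since everything else is a direct substitution into the already-proved Theorem~\ref{thm:SL-enumeration-arbitrary-length}.
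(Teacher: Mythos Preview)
Your proposal is correct and matches the paper's approach: the paper's proof is simply ``This follows immediately from Theorem~\ref{thm:SL-enumeration-arbitrary-length} upon substituting $\ell \mapsto n$, using the formulae for the orders of Singer cycles in Proposition~\ref{prop:Singer cycles exist},'' and you have faithfully unpacked exactly those two steps. Your additional paragraph verifying that $n$ is the minimum length is not needed for this corollary as stated (it asserts only ``a product of $n$ reflections,'' not ``the minimum number''), though the observation is correct and relevant to Main Theorem~\ref{main:short}.
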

\begin{proof}
This follows immediately from Theorem~\ref{thm:SL-enumeration-arbitrary-length} upon substituting $\ell \mapsto n$, using the formulae for the orders of Singer cycles in Proposition~\ref{prop:Singer cycles exist}.
\end{proof}

Having completed the proofs of our main theorems in the determinant-$1$ case, we now move on to the remaining groups, all of which contain semisimple reflections.

\subsection{Character values on semisimple reflections}

As discussed in Section~\ref{sec:background}, the semisimple reflections are the diagonalizable elements of $\GLU_n$ in which $1$ is an eigenvalue of multiplicity $n - 1$.  Let $t$ be a semisimple reflection in $\GLU_n$, let $\chi^{\phi, \lambda}$ be a primary irreducible character for $\GLU_n$, and let $d = |\phi|$ (so that $\lambda \vdash n/d$). Since $t$ is semisimple, in its Jordan decomposition $t = su$, we have $s = t$ and $u = 1$. By \eqref{eq:primary char}, we have that the character value $\chi^{\phi, \lambda}(t)$ is a sum of (understandable multiples of) the Deligne--Lusztig character values $R^{\GLU_n}_{\TTT_{\gamma}}(\theta) (t)$.  Evaluating this via \eqref{eq:induction}, we are faced with a sum indexed by $\{h \in \GLU_n: t \in {^h \TTT_{d\gamma}}\}$.  By combining the description of the eigenvalues of Singer cycles (from Sections~\ref{sec:GL Singer cycles} and~\ref{sec:Singer cycles in other groups}) with the description of maximal tori in Section~\ref{sec:conjugacy classes}, we see that for any element of $\TTT_\mu$, the multiplicity of the eigenvalue $1$ is a sum of parts of $\mu$: each component $T_{\mu_i}$ either contributes the identity, with all eigenvalues $1$, or else contributes $\mu_i$ non-unit eigenvalues.  Therefore, $t$ belongs to a conjugate of the torus $\TTT_\mu$ if and only if $\mu$ has at least one of its parts equal to $1$.  This immediately implies the following statement.
\begin{prop}\label{prop:semisimple 0}
    For a semisimple reflection $t$ in $\GLU_n$ and a primary irreducible character $\chi^{\phi, \lambda}$ for $\GLU_n$, if $d := |\phi| > 1$ then $\chi^{\phi, \lambda}(t) = 0$.
\end{prop}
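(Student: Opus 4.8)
The plan is to combine the Deligne--Lusztig expansion \eqref{eq:primary char} of $\chi^{\phi,\lambda}$ with the torus/eigenvalue bookkeeping developed in the paragraph preceding the statement. Since $t$ is semisimple, its Jordan decomposition is $t = su$ with $s = t$ and $u = 1$; hence \eqref{eq:primary char} exhibits $\chi^{\phi,\lambda}(t)$ as a rational linear combination of the Deligne--Lusztig character values $R^{\GLU_n}_{\TTT_{d\gamma}}(\theta)(t)$, one for each $\gamma \vdash n/d$. It therefore suffices to show that each of these values vanishes.

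To see this, I would note that the partition indexing $\TTT_{d\gamma}$ has parts $d\gamma_1, d\gamma_2, \ldots$, each of which is at least $d$; since $d = |\phi| > 1$, none of these parts equals $1$. But we have already observed that an element of a torus $\TTT_\mu$ has the multiplicity of the eigenvalue $1$ equal to a sum of a sub-multiset of the parts of $\mu$, and consequently that a semisimple reflection $t$ --- whose eigenvalue $1$ has multiplicity exactly $n - 1$ --- is conjugate into $\TTT_\mu$ only if $\mu$ has a part equal to $1$. Hence no $\GLU_n$-conjugate of $\TTT_{d\gamma}$ contains $t$. Feeding this into the Lusztig induction formula \eqref{eq:induction} for $R^{\GLU_n}_{\TTT_{d\gamma}}(\theta)$ evaluated at $g = t$ (so that $s = t$), the outer sum there, indexed by $\{h \in \GLU_n : s \in {}^h\TTT_{d\gamma}\}$, is empty; thus $R^{\GLU_n}_{\TTT_{d\gamma}}(\theta)(t) = 0$ for every $\gamma$, and so $\chi^{\phi,\lambda}(t) = 0$.

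There is no serious obstacle here: the entire content is the eigenvalue count for elements of the tori $\TTT_\mu$, which has already been set up, after which \eqref{eq:primary char} and \eqref{eq:induction} do the rest mechanically. The only point requiring a moment's care is the claim that a nontrivial torus block $T_{\mu_i}$ (with $\mu_i \geq 2$) contributes no eigenvalue equal to $1$ --- this holds because a nontrivial Frobenius orbit in $\KK^\times$ cannot contain $1$ --- but this is elementary and, in any case, already implicit in the preceding discussion.
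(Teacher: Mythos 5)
Your argument is correct and essentially identical to the paper's proof, which likewise notes that $t = s$ is semisimple, expands $\chi^{\phi,\lambda}(t)$ via \eqref{eq:primary char} into the Deligne--Lusztig values $R^{\GLU_n}_{\TTT_{d\gamma}}(\theta)(t)$, and kills each of them through \eqref{eq:induction} because every part of $d\gamma$ is at least $d > 1$, while a semisimple reflection lies in a conjugate of $\TTT_\mu$ only when $\mu$ has a part equal to $1$ (the multiplicity of the eigenvalue $1$ for an element of $\TTT_\mu$ being a sum of parts of $\mu$). Only your closing parenthetical is misstated---a block $T_{\mu_i}$ with $\mu_i \ge 2$ \emph{can} contribute eigenvalues equal to $1$, namely when its entry is the identity---but this is harmless, since the sub-multiset-of-parts statement you actually use (each block contributes either all eigenvalues $1$ or none, because the eigenvalues of a non-identity entry form a Frobenius orbit avoiding $1$) already accounts for that case.
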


Now consider the case $d = 1$, so that $\lambda \vdash n$.  By \eqref{eq:primary char} and \eqref{eq:induction}, and using again that $t$ belongs to a conjugate of $\TTT_\gamma$ if and only if $\gamma$ has a part of size $1$, we have
\begin{multline*}
\chi^{\phi, \lambda}(t) = \vep^{a(\lambda') + \lfloor n/2 \rfloor}
\sum_{\substack{\gamma = \langle\gamma_1, \ldots, \gamma_\ell\rangle \vdash n \\ \gamma_\ell = 1}} 
\frac{\omega^\lambda(\gamma)}{z_\gamma \cdot |\TTT_\gamma| \cdot |C_{\GLU_n}(t)|}
\times {}
\\
{} \times 
\sum_{h \in \GLU_n: t \in {^h \TTT_\gamma}} |C_{^h\TTT_\gamma}(t)|
\sum_{v \in \left(C_{^h\TTT_\gamma}(t)\right)_\text{unip.}} \Grfn^{C_{\GLU_n}(t)}_{C_{^h\TTT_\gamma}(t)}(1, v^{-1}) \cdot ({^h \theta})(tv).
\end{multline*}
As before, $C_{^h\TTT_\gamma}(t) = {^h\TTT_\gamma}$ because the torus is abelian.  Furthermore, since $\TTT_\gamma$ consists entirely of semisimple elements, the innermost sum has a single term $v = 1$, and so
\[
\chi^{\phi, \lambda}(t) = \vep^{a(\lambda') + \lfloor n/2 \rfloor} \sum_{\substack{\gamma \vdash n \\ \gamma_\ell = 1}} 
\frac{\omega^\lambda(\gamma)}{z_\gamma \cdot |C_{\GLU_n}(t)|}
\sum_{h \in \GLU_n: t \in {^h \TTT_\gamma}} \Grfn^{C_{\GLU_n}(t)}_{^h\TTT_\gamma}(1, 1) \cdot ({^h \theta})(t). 
\]
As described above in Section~\ref{sec:conjugacy classes}, the centralizer $C_{\GLU_n}(t)$ consists of those elements of $\GLU_n$ that stabilize the two eigenspaces of $t$, and is isomorphic to $\GLU_{n - 1} \times \GLU_1$.  By Theorem~\ref{thm:ennola}, using the facts that $t$ is semisimple, the degrees of the indexing polynomials are $1$, and Green functions behave as one would expect on direct products, it follows that
\[
\Grfn^{C_{\GLU_n}(t)}_{^h \TTT_\gamma}(1, 1) = Q^{\langle1^{n - 1}\rangle}_{\gamma'}( \vep q) \cdot Q^{\langle1\rangle}_{\langle1\rangle}( \vep q)
\]
where $\gamma' = \langle\gamma_1, \ldots, \gamma_{\ell - 1}\rangle$ is the result of removing the last part (of size~$1$) from $\gamma$.  Thus
\begin{equation}
\label{eq:semisimple}
\chi^{\phi, \lambda}(t) = \vep^{a(\lambda') + \lfloor n/2 \rfloor} \sum_{\substack{\gamma \vdash n \\ \gamma_\ell = 1}} 
\frac{\omega^\lambda(\gamma) \cdot Q^{\langle1^{n - 1}\rangle}_{\gamma'}( \vep q) \cdot Q^{\langle1\rangle}_{\langle1\rangle}( \vep q)}{z_\gamma \cdot |\GLU_{n - 1}| \cdot |\GLU_1|}
\sum_{h \in \GLU_n: t \in {^h \TTT_\gamma}}  ({^h \theta})(t).
\end{equation}

In what follows, we will need to compute certain sums of character values on semisimple reflections that we describe now.
\begin{proposition}\label{prop:semisimple}
Fix a positive integer $n$, a sign $\vep$, and a partition $\lambda \vdash n$, and define
\[
S_\lambda(z) := \frac{z^{n - 1} \cdot (z^n - 1)}{z - 1} \cdot \sum_{\substack{\gamma \vdash n \colon \gamma \text{ has}\\\text{a part of size } 1}} \frac{\omega^\lambda(\gamma) \cdot  Q^{\langle1^{n - 1}\rangle}_{\gamma'}( z) \cdot Q^{\langle1\rangle}_{\langle1\rangle}( z) \cdot m_1(\gamma)}{z_\gamma},
\]
where $\gamma'$ is the result of removing a part of size $1$ from $\gamma$ and $m_1(\gamma)$ is the multiplicity of $1$ as a part of $\gamma$.
Let $\phi = \{\theta\} \in \charorbs$ be a $\Frob$-orbit of size $1$, with $\theta$ a character for $\GLU_1$, and let $X$ be a nontrivial subgroup of $\GLU_1$.

If the restriction $\theta \res_X$ is the trivial representation on $X$, then
\[
\sum_{\substack{t \text{ a semisimple}\\ \text{reflection in } G_X}} \chi^{\phi, \lambda}(t) = \vep^{a(\lambda') + \lfloor n/2 \rfloor} \cdot (|X| - 1) \cdot S_\lambda(\vep q).
\]
Otherwise,
\[
\sum_{\substack{t \text{ a semisimple}\\ \text{reflection in } G_X}} \chi^{\phi, \lambda}(t) = - \vep^{a(\lambda') + \lfloor n/2 \rfloor} \cdot S_\lambda(\vep q).
\]
\end{proposition}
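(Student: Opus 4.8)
The plan is to compute the sum $\sum_{t}\chi^{\phi,\lambda}(t)$ over semisimple reflections in $G_X$ by starting from the closed formula \eqref{eq:semisimple} for a single character value and summing over all semisimple reflections whose determinant lies in $X$. First I would partition the set of semisimple reflections in $G_X$ according to their (non-identity) determinant $\alpha$, which ranges over $X\smallsetminus\{1\}$. For each such $\alpha$, the number of semisimple reflections with determinant $\alpha$ was recorded in Section~\ref{sec:centralizers} as $(\vep q)^{n-1}\cdot\frac{(\vep q)^n-1}{\vep q-1}$, independent of $\alpha$; combined with the fact that all these reflections are conjugate in $\GLU_n$ to the fixed representative $t$ with $\det(t)=\alpha$, the only $\alpha$-dependence in the character value \eqref{eq:semisimple} comes through the inner sum $\sum_{h\in\GLU_n:\,t\in{}^h\TTT_\gamma}({}^h\theta)(t)$. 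So the crux is to evaluate that inner sum and then sum its contribution over $\alpha\in X\smallsetminus\{1\}$.

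Next I would analyze the inner sum $\sum_{h:\,t\in{}^h\TTT_\gamma}({}^h\theta)(t)$. For $t$ a semisimple reflection with $\det(t)=\alpha$ sitting inside a conjugate of $\TTT_\gamma$ (where $\gamma$ has a part of size $1$), the conjugate ${}^h\TTT_\gamma$ contains $t$ exactly when the size-$1$ block of the torus carries the eigenvalue $\alpha$ and the remaining blocks carry the eigenvalue-$1$ part; the character ${}^h\theta$ evaluated at $t$ then equals $\theta(\alpha)$ (since $\theta$ is a character of $\GLU_1=T_1$ and the only non-unit eigenvalue of $t$ is $\alpha$, living in the size-$1$ component). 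Counting the $h$'s: one needs the number of ways to conjugate $\TTT_\gamma$ so that $t\in{}^h\TTT_\gamma$, which is a centralizer-index computation; the key combinatorial input is that the multiplicity $m_1(\gamma)$ of parts equal to $1$ appears as a multiplicative factor (there are $m_1(\gamma)$ choices of which size-$1$ block receives $\alpha$), and the remaining count is $|C_{\GLU_n}(t)|=|\GLU_{n-1}|\cdot|\GLU_1|$ divided by the appropriate automorphism factors so that it cancels against the $|C_{\GLU_n}(t)|$, $z_\gamma$ and $|\TTT_\gamma|$ denominators already present in \eqref{eq:semisimple}. After this bookkeeping, the value $\chi^{\phi,\lambda}(t)$ for $\det(t)=\alpha$ should reduce to $\vep^{a(\lambda')+\lfloor n/2\rfloor}\cdot\theta(\alpha)$ times an expression depending only on $n,\lambda,\vep$ (namely the $\gamma$-sum assembling into $S_\lambda(\vep q)$ after multiplying by the transvection/reflection count $(\vep q)^{n-1}\frac{(\vep q)^n-1}{\vep q-1}$ to clear the polynomial normalization).

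Finally I would assemble the total: summing over all semisimple reflections in $G_X$ gives
\[
\sum_{t}\chi^{\phi,\lambda}(t) = \vep^{a(\lambda')+\lfloor n/2\rfloor}\cdot S_\lambda(\vep q)\cdot\sum_{\alpha\in X\smallsetminus\{1\}}\theta(\alpha).
\]
The remaining step is the elementary character-sum identity $\sum_{\alpha\in X}\theta(\alpha)=|X|$ if $\theta\res_X$ is trivial and $0$ otherwise (orthogonality of characters of the finite abelian group $X$). Hence $\sum_{\alpha\in X\smallsetminus\{1\}}\theta(\alpha)=|X|-1$ in the trivial-restriction case and $-1$ otherwise, which yields exactly the two claimed formulae. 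The main obstacle I anticipate is the precise centralizer/torus-conjugacy count in the middle step — making sure the factor $m_1(\gamma)$ emerges correctly and that all the normalizing factors ($z_\gamma$, $|\TTT_\gamma|$, $|C_{\GLU_n}(t)|$, and the reflection count) cancel cleanly to leave $S_\lambda(\vep q)$ — since this is where sign and cardinality errors are easiest to introduce; the Deligne--Lusztig input and the final orthogonality step are comparatively routine.
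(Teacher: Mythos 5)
Your proposal is correct and takes essentially the same route as the paper's proof: both reduce to showing that the conjugation sum in \eqref{eq:semisimple} evaluates to $m_1(\gamma)\cdot|C_{\GLU_n}(t)|\cdot\theta(\det(t))$, cancel the centralizer against $|\GLU_{n-1}|\cdot|\GLU_1|$, multiply by the count of semisimple reflections of a fixed determinant, and finish with orthogonality of $\theta$ restricted to $X$. The only difference is bookkeeping --- you organize the outer sum by determinant (i.e.\ by $\GLU_n$-conjugacy class), while the paper organizes it by the cyclic groups of reflections sharing a fixed hyperplane and eigenline --- an immaterial reordering of the same double sum.
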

\begin{proof}
Let $n, \vep, X, \lambda$ be as in the statement of the proposition, and let $\phi = \{\theta\} \in \charorbs$ be a $\Frob$-orbit of characters of size $1$.
Choose a semisimple reflection $t$ in $G_X$.  It shares its centralizer with all reflections that have the same fixed hyperplane and non-fixed eigenline.  Together with the identity, these reflections form a cyclic group $A$ that is isomorphic to $X$; in particular, there are $|X| - 1$ reflections in $A$, one of each determinant in $X$ other than $1$.  Obviously two different such subgroups $A_1$, $A_2$ intersect only at the identity.  Thus $\sum_{t \text{ s.s.}} \chi^{\phi, \lambda}(t)$ can be rewritten as $\sum_A \sum_{t\in A \smallsetminus \{1\}} \chi^{\phi, \lambda}(t)$, where the outer sum is over all such cyclic groups $A$.  By \eqref{eq:semisimple}, the inner sum can be rewritten as
\begin{multline*}
\sum_{t \in A \smallsetminus\{1\}} \chi^{\phi, \lambda}(t) = \vep^{a(\lambda') + \lfloor n/2 \rfloor} \sum_{\gamma} 
\frac{\omega^\lambda(\gamma) \cdot Q^{\langle1^{n - 1}\rangle}_{\gamma'}( \vep q) \cdot Q^{\langle1\rangle}_{\langle1\rangle}( \vep q)}{z_\gamma \cdot |\GLU_{n - 1}| \cdot |\GLU_1|}
\times {} \\ {} \times 
\sum_{t \in A \smallsetminus\{1\}} \sum_{h \in \GLU_n: t \in {^h \TTT_\gamma}}  ({^h \theta})(t).    
\end{multline*}

Consider now the inner double sum
\[
\sum_{t \in A \smallsetminus\{1\}} \sum_{h \in \GLU_n: t \in {^h \TTT_\gamma}}  ({^h \theta})(t).
\]
Suppose that $h \in \GLU_n$ is such that $t \in {^h \TTT_\gamma}$.  As discussed at the beginning of this subsection, the semisimple reflections in $\TTT_\gamma$ arise by writing $\TTT_\gamma \cong T_{\gamma_1} \times \cdots$ and taking the identity element in every component except for a single copy of $T_1 = \GLU_1$, while in that component we take the value $\det(t)$.  Two things immediately follow from this: first, that the entire cyclic group $A \ni t$ satisfies $A \subseteq {^h \TTT_\gamma}$ (for the same element $h$), and second, that the \emph{number} of conjugates of $t$ in $\TTT_\gamma$ is precisely the number $m_1(\gamma)$ of parts of $\gamma$ that are equal to $1$.  
By combining these two observations and denoting (in an abuse of notation) by $X$ the (unique) subgroup of $\GLU_1$ of order $|X|$, we immediately conclude that
\[
\sum_{t \in A \smallsetminus\{1\}} \sum_{h \in \GLU_n: t \in {^h \TTT_\gamma}}  ({^h \theta})(t)
=
m_1(\gamma) \cdot |C_{\GLU_n}(t)| \cdot \sum_{x \in X \smallsetminus\{1\}} \theta(x).
\]
Therefore
\begin{equation}\label{eq:another semisimple sum}
\sum_{t \in A \smallsetminus\{1\}} \chi^{\phi, \lambda}(t) = \vep^{a(\lambda') + \lfloor n/2 \rfloor} \sum_{\gamma} 
\frac{\omega^\lambda(\gamma) \cdot Q^{\langle1^{n - 1}\rangle}_{\gamma'}( \vep q) \cdot Q^{\langle1\rangle}_{\langle1\rangle}( \vep q) \cdot 
m_1(\gamma)}{z_\gamma}
 \cdot 
\sum_{x \in X \smallsetminus\{1\}} \theta(x).
\end{equation}

For any cyclic group $C$ and any one-dimensional character $\theta$ of $C$, we have
\[
\sum_{x \in C} \theta(x) = \begin{cases} |C| & \text{if } \theta = \one_C \text{ is the trivial character}, \\
0 & \text{otherwise.}
\end{cases}
\]
Applying this to the innermost sum in \eqref{eq:another semisimple sum} in the specific case that $C = X$ yields
\begin{multline*}
\sum_{t \in A \smallsetminus\{1\}} \chi^{\phi, \lambda}(t) = \vep^{a(\lambda') + \lfloor n/2 \rfloor} \sum_{\gamma} 
\frac{\omega^\lambda(\gamma) \cdot Q^{\langle1^{n - 1}\rangle}_{\gamma'}( \vep q) \cdot Q^{\langle1\rangle}_{\langle1\rangle}( \vep q) \cdot m_1(\gamma)}{z_\gamma}
\times {} \\ {} \times 
\begin{cases} |X| - 1 & \text{if } \phi = \{\theta\} \text{ where } \theta\res_{X} = \one_X \\ -1 & \text{otherwise}\end{cases}.
\end{multline*}
Since these sums (for different cyclic groups $A$) are pairwise disjoint, we can sum over all of them to conclude
\begin{multline}\label{eq:yet another ss sum}
\sum_{t \text{ s.s.\ refn.\ in } G_X}\chi^{\phi, \lambda}(t) = \vep^{a(\lambda') + \lfloor n/2 \rfloor} \frac{\text{\# s.s.\ refns.\ in } G_X}{|X| - 1} 
\times {} \\ {} \times 
\sum_{\gamma} \frac{\omega^\lambda(\gamma) \cdot Q^{\langle1^{n - 1}\rangle}_{\gamma'}( \vep q) \cdot Q^{\langle1\rangle}_{\langle1\rangle}( \vep q) \cdot m_1(\gamma)}{z_\gamma}
\begin{cases} |X| - 1 & \text{if } \phi = \{\theta\}, \theta\res_{X} = \one_X \\ -1 & \text{otherwise}\end{cases}.
\end{multline}
Then the claim follows immediately from the observations that the number of semisimple reflections is $|X| - 1$ times the number in each conjugacy class, and this latter quantity is
\[
\frac{|\GLU_n|}{|\GLU_{n - 1}| \cdot |\GLU_1|} = \frac{(\vep q)^{n - 1} \cdot ((\vep q)^n - 1)}{\vep q - 1}.
\qedhere
\]
\end{proof}

\subsection{Completing the proofs of the main theorems}

We are now ready to complete the proofs of the main results stated in the introduction.  We begin with the relevant version of Ennola duality for the groups that strictly contain $\SLU_n$.

\begin{theorem}\label{thm:|X| > 1}
    Fix an odd positive integer $n$ and nonnegative integers $m < \ell$.  There is a rational function $P := P_{n, m, \ell}$ with the following property: for every prime power $q$, sign $\vep$, and nontrivial subgroup $X$ of $\GLU_1$, the number of reflection factorizations of a Singer cycle in $G_X$ as a product of $\ell$ reflections, of which precisely $m$ are transvections, is
    \[
    \frac{(|X| - 1)^{\ell - m} - (-1)^{\ell - m}}{|X|} \cdot \binom{\ell}{m} \cdot P( \vep q ).
    \]
\end{theorem}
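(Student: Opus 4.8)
The plan is to rerun the character-theoretic computation of Section~\ref{sec:proofs}, but with the reflections of $G_X$ separated by type. The transvections form a single $\GLU_n$-conjugacy class (contained in $\SLU_n \leq G_X$), while the semisimple reflections of $G_X$ form the union of the $\GLU_n$-conjugacy classes of semisimple reflections with determinant in $X\smallsetminus\{1\}$; moreover any product of elements of $G_X$ lies in $G_X$. Hence Proposition~\ref{general factorization prop}, applied inside $\GLU_n$ with $A_1 = \cdots = A_m$ the set of transvections and $A_{m+1} = \cdots = A_\ell$ the set of semisimple reflections of $G_X$, counts exactly those factorizations of $c$ in $G_X$ whose first $m$ factors are transvections and whose last $\ell-m$ factors are semisimple reflections; multiplying by $\binom{\ell}{m}$ (the number of choices of which positions carry transvections) yields the quantity in the statement. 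Exactly as in Corollary~\ref{cor:sparse}, Proposition~\ref{prop:mostly zero} lets us discard every non-primary character and every primary character whose partition is not a hook $\hook{k}{n/d}$. Crucially, since $m < \ell$ we have $\ell - m \geq 1$, so the factor $\bigl(\sum_{t\text{ s.s.\ in }G_X}\chi^{\phi,\hook{k}{n/d}}(t)\bigr)^{\ell-m}$ vanishes whenever $|\phi| = d > 1$, by Proposition~\ref{prop:semisimple 0}. Thus only the terms with $d=1$ survive, indexed by a character $\theta$ of $\GLU_1$ (that is, a size-one orbit $\phi = \{\theta\}$) and an integer $k$ with $0 \leq k \leq n-1$; write $\lambda = \hook{k}{n}$.

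Into each such term I would substitute the known evaluations: Proposition~\ref{prop:degrees} for $\deg\chi^{\{\theta\},\lambda}$; equation~\eqref{eq:Singer} (from the proof of Proposition~\ref{prop:Singer sum}), which applied to the Singer cycle $c^{-1}$ gives, for $d=1$, the value $\chi^{\{\theta\},\lambda}(c^{-1}) = (-1)^{k}\,\vep^{a(\lambda')+\lfloor n/2\rfloor}\,Q^{\langle1\rangle}_{\langle1\rangle}((\vep q)^n)\,\theta(c^{-1})$; Corollary~\ref{cor:transvections} for $\sum_{t\text{ transv.}}\chi^{\{\theta\},\lambda}(t)$; and Proposition~\ref{prop:semisimple} for $\sum_{t\text{ s.s.\ in }G_X}\chi^{\{\theta\},\lambda}(t)$, which equals $\vep^{a(\lambda')+\lfloor n/2\rfloor}$ times $(|X|-1)S_\lambda(\vep q)$ or $-S_\lambda(\vep q)$ according as $\theta\res_X$ is trivial or not. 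Each of these is a fixed sign $\vep^{a(\lambda')+\lfloor n/2\rfloor}$ (the degree carries $\vep^{a(\lambda')+\binom{n}{2}}$, but $\binom{n}{2}\equiv\lfloor n/2\rfloor \pmod 2$) times a rational function of $\vep q$; the four sign factors appear with combined exponent $(1-\ell)+1+m+(\ell-m) = 2$ and so multiply to $1$. The only surviving dependence on $\theta$ is through $\theta(c^{-1})$ and through the dichotomy ``$\theta\res_X$ trivial or not''.

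The heart of the argument is then the remaining sum over the $q-\vep$ characters $\theta$ of $\GLU_1$. As in the proof of Proposition~\ref{prop:Singer sum}, $\theta(c^{-1})$ equals the value at $\det(c)^{-1}$ of the corresponding character of $\GLU_1$, and $\det(c)$ is a cyclic generator of $X$ by Proposition~\ref{prop:Singer cycles exist}. Therefore $\sum_{\theta\colon\theta\res_X=\one_X}\theta(c^{-1}) = (q-\vep)/|X|$, while $\sum_{\theta\colon\theta\res_X\neq\one_X}\theta(c^{-1}) = -(q-\vep)/|X|$ (the full sum over all $\theta$ is $0$, since $\det(c)^{-1}\neq 1$). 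Consequently
\[
\sum_{\theta\colon\theta\res_X=\one_X}\hspace{-1.2em}\theta(c^{-1})\cdot(|X|-1)^{\ell-m} \;+\; \sum_{\theta\colon\theta\res_X\neq\one_X}\hspace{-1.2em}\theta(c^{-1})\cdot(-1)^{\ell-m} \;=\; \frac{q-\vep}{|X|}\Bigl((|X|-1)^{\ell-m}-(-1)^{\ell-m}\Bigr).
\]
Dividing by $|\GLU_n|$ and using $q-\vep = \vep(\vep q - 1)$ together with~\eqref{eq:GLU cardinality}, the factor $(q-\vep)/|\GLU_n|$ becomes $\vep^{1-n}\big/\bigl((\vep q)^{\binom{n}{2}}(\vep q-1)^{n-1}[n]!_{\vep q}\bigr)$, and $\vep^{1-n}=1$ since $n$ is odd, so this is a rational function of $\vep q$. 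Collecting the surviving $k$-sum, the number of factorizations in question equals $\tfrac{(|X|-1)^{\ell-m}-(-1)^{\ell-m}}{|X|}\binom{\ell}{m}$ times
\[
P(\vep q) := \frac{1}{(\vep q)^{\binom{n}{2}}(\vep q-1)^{n-1}[n]!_{\vep q}}\sum_{k=0}^{n-1}(-1)^{k}\bigl(f^{1,\hook{k}{n}}(\vep q)\bigr)^{1-\ell}\,Q^{\langle1\rangle}_{\langle1\rangle}\!\bigl((\vep q)^n\bigr)\,R_{n,\hook{k}{n}}(\vep q)^{m}\,S_{\hook{k}{n}}(\vep q)^{\ell-m}.
\]
Since the right-hand side is a fixed rational function $P = P_{n,m,\ell}$ of a single variable $z$ evaluated at $z = \vep q$, taking $\vep = +$ gives the linear count with $P(q)$ and $\vep = -$ gives the unitary count with $P(-q)$, which is the assertion.

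I expect the only genuine subtleties to be (i) the justification that a sum over $\Irr(\GLU_n)$ really computes factorizations inside the subgroup $G_X$ — valid precisely because the relevant reflections form unions of $\GLU_n$-classes lying in $G_X$ and $G_X$ is closed under multiplication — and (ii) carrying the sign $\vep^{a(\lambda')+\lfloor n/2\rfloor}$ and the two cases of Proposition~\ref{prop:semisimple} correctly through the $\theta$-summation. Once the vanishing of the $d>1$ terms is in hand, everything else is routine bookkeeping with formulae already established.
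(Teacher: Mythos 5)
Your proposal is correct and follows essentially the same route as the paper: Frobenius's formula with the reflections split by type, sparsity plus Proposition~\ref{prop:semisimple 0} to kill all orbits with $|\phi|>1$, then the substitutions from Proposition~\ref{prop:degrees}, Corollary~\ref{cor:transvections}, and Proposition~\ref{prop:semisimple}, with the signs cancelling exactly as you say. The only cosmetic difference is that you evaluate the two $\theta$-sums directly by orthogonality on the cyclic group $\GLU_1$, whereas the paper invokes Proposition~\ref{prop:Singer sum} (whose proof rests on the same orthogonality) to see that the two sums are negatives of each other.
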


We remark that the hypotheses on $\ell, m, n$ make sense: by Proposition~\ref{prop:Singer cycles exist}, the group $G_X$ contains a Singer cycle $c$ with $\det(c) \neq 1$, so that every reflection factorization of $c$ requires at least one semisimple factor.  In the case $\vep = +, q = 2$, the statement is vacuous (there are no nontrivial subgroups of $\GL_1(2) = \{1\}$).

\begin{proof}
Let $\ell, m, n, q, X$ be as in the statement, let $c$ be a Singer cycle in $G_X$, and let $\Fac^X_{m, \ell}(c)$ denote the number of factorizations of the specified kind.  Combining the argument that established Corollary~\ref{cor:sparse} with Proposition~\ref{prop:semisimple 0}, we have that
\begin{multline}\label{eq:simplified in det neq 1 case}
\Fac^X_{m, \ell}(c) = \frac{\binom{\ell}{m}}{|\GLU_n|} \sum_{\substack{\phi \in \charorbs: \\ |\phi| = 1}} \sum_{k = 0}^{n - 1} \Bigg(
\deg(\chi^{\phi, \hook{k}{n}})^{1 - \ell} \chi^{\phi, \hook{k}{n}}(c^{-1}) 
 \times {}
 \\
 {} \times
\left(\sum_{t \text{ transv.}}\chi^{\phi, \hook{k}{n}}(t)\right)^m
\cdot
\left(\sum_{t \text{ s.s.\ refn.\ in } G_X}\chi^{\phi, \hook{k}{n}}(t)\right)^{\ell - m} \Bigg).
\end{multline}
(Here the binomial coefficient $\binom{\ell}{m}$ accounts for the fact that we are allowing the $m$ transvections to be in any position in the factorization; we could instead fix the positions and the analysis would be the same except for this factor.)
By Proposition~\ref{prop:degrees}, Corollary~\ref{cor:transvections}, and Proposition~\ref{prop:semisimple}, we can rewrite \eqref{eq:simplified in det neq 1 case} as
\begin{multline}\label{eq:second simplification in det neq 1 case}
\Fac^X_{m, \ell}(c) = \\
 \frac{\binom{\ell}{m}}{|\GLU_n|} \sum_{\substack{\phi = \{\theta\}\in \charorbs: \\ \theta \res_X = \one_X}}  \sum_{k = 0}^{n - 1} \Bigg( 
\left(\vep^{\binom{n - k}{2} + \binom{n}{2}} f^{1, \hook{k}{n}}(\vep q)\right)^{1 - \ell} 
\cdot 
\chi^{\phi, \hook{k}{n}}(c^{-1}) 
\times {} \\ {} \times
\left(\vep^{\binom{n - k}{2} + \lfloor n/2\rfloor} R_{n, \hook{k}{n}}(\vep q)\right)^m \left(\vep^{\binom{n - k}{2} + \lfloor n/2\rfloor} (|X| - 1) S_{\hook{k}{n}}(\vep q)\right)^{\ell - m} \Bigg)
+ {}
\\
{} +
\frac{\binom{\ell}{m}}{|\GLU_n|}\sum_{\substack{\phi = \{\theta\}\in \charorbs: \\ \theta \res_X \neq \one_X}} \sum_{k = 0}^{n - 1} \Bigg(
\left(\vep^{\binom{n - k}{2} + \binom{n}{2}} f^{1, \hook{k}{n}}(\vep q)\right)^{1 - \ell} \cdot 
\chi^{\phi, \hook{k}{n}}(c^{-1})
\times{}
\\
{} \times
\left(\vep^{\binom{n - k}{2} + \lfloor n/2\rfloor} R_{n, \hook{k}{n}}(\vep q)\right)^m \left(-\vep^{\binom{n - k}{2} + \lfloor n/2\rfloor} S_{\hook{k}{n}}(\vep q)\right)^{\ell - m} \Bigg),
\end{multline}
where $f$, $R$, and $S$ are polynomials with no dependence on $\phi$.
Therefore, after cancelling signs, switching the order of summation, and combining like terms, we have
\begin{multline}\label{eq:third simplification in det neq 1 case}
\Fac^X_{m, \ell}(c) = 
 \frac{\binom{\ell}{m}}{|\GLU_n|} 
 \sum_{k = 0}^{n - 1} 
 \vep^{\binom{n - k}{2} + \binom{n}{2}} \cdot 
 f^{1, \hook{k}{n}}(\vep q)^{1 - \ell} \cdot
 R_{n, \hook{k}{n}}(\vep q)^m \cdot
 S_{\hook{k}{n}}(\vep q)^{\ell - m}
 \times {}
 \\
 {} \times
 \Bigg( (|X| - 1)^{\ell - m} \sum_{\substack{\phi = \{\theta\}\in \charorbs: \\ \theta \res_X = \one_X}} \chi^{\phi, \hook{k}{n}}(c^{-1}) +
 (-1)^{\ell - m}\sum_{\substack{\phi = \{\theta\}\in \charorbs: \\ \theta \res_X \neq \one_X}} \chi^{\phi, \hook{k}{n}}(c^{-1}) \Bigg).
\end{multline}
By Proposition~\ref{prop:Singer sum}, since $X$ is nontrivial, the two sums in the innermost parenthesized sum in \eqref{eq:third simplification in det neq 1 case} are negatives of each other, and so
\begin{multline}\label{eq:fourth simplification in det neq 1 case}
\Fac^X_{m, \ell}(c) = 
 \frac{\binom{\ell}{m}}{|\GLU_n|} 
 \sum_{k = 0}^{n - 1} 
 \vep^{\binom{n - k}{2} + \binom{n}{2}} \cdot 
 f^{1, \hook{k}{n}}(\vep q)^{1 - \ell} \cdot
 R_{n, \hook{k}{n}}(\vep q)^m \cdot
 S_{\hook{k}{n}}(\vep q)^{\ell - m}
 \times {}
 \\
 {} \times
 \left( (|X| - 1)^{\ell - m}  - (-1)^{\ell - m}\right) \cdot \sum_{\substack{\phi = \{\theta\}\in \charorbs: \\ \theta \res_X = \one_X}} \chi^{\phi, \hook{k}{n}}(c^{-1}).
\end{multline}
When $\theta \in \wh{T}_1$ is such that $\theta \res_{X} = \one_X$, we have by \eqref{eq:Singer} that
\[
\chi^{\{\theta\}, \hook{k}{n}}(c^{-1}) = (-1)^k \vep^{\binom{n - k}{2} + \lfloor n/2 \rfloor} Q^{\langle1\rangle}_{\langle1\rangle}((\vep q)^n).
\]
Moreover, the number of such characters on $\GLU_1$ is precisely $\frac{|\GLU_1|}{|X|} = \vep \cdot \frac{ \vep q - 1}{|X|}$.  Therefore \eqref{eq:fourth simplification in det neq 1 case} simplifies further, to
\begin{multline*}
\Fac^X_{m, \ell}(c) = \left( \left(|X| - 1\right)^{\ell - m} - (-1)^{\ell - m}\right) \cdot \frac{\binom{\ell}{m}}{|\GLU_n|}  \sum_{k = 0}^{n - 1} \Bigg(
  f^{1, \hook{k}{n}}(\vep q)^{1 - \ell} 
\times{}
\\
{} \times
 R_{n, \hook{k}{n}}(\vep q)^m \cdot S_{\hook{k}{n}}(\vep q)^{\ell - m} \cdot
Q^{\langle1\rangle}_{\langle1\rangle}((\vep q)^n) \cdot (-1)^k \cdot \vep \cdot \frac{\vep q - 1}{|X|}\Bigg).
\end{multline*}
Moving the factor $\frac{\vep}{|X|}$ outside the sum, expanding $|\GLU_n| = \vep^n (-1)^n (\vep q)^{\binom{n}{2}} (\vep q ; \vep q)_n$, combining the powers of $\vep$, and observing that (since $n$ is odd) $\vep^{n + 1} = 1$ gives the result.
\end{proof}

\begin{cor}\label{cor:counting |X| > 1}
Fix a positive integer $n$, a sign $\vep$, a prime power $q$, and a nontrivial subgroup $X$ of $\GLU_1$.  Suppose that these choices are such that the group $G_X \leq \GLU_n$ contains a Singer cycle $c$.  Then for nonnegative integers $m < \ell$, the number of factorizations of $c$ as a product of $\ell$ reflections, of which exactly $m$ are transvections, is
\[
\binom{\ell}{m} \cdot \frac{(|X| - 1)^{\ell - m} - (-1)^{\ell - m}}{|X|} \cdot [n]_{\vep q}^{\ell - 1} \cdot \sum_{i = 0}^{\min(m, \ell - n)} (-1)^i \binom{m}{i} \qbin{\ell - i - 1}{n - 1}{\vep q}.
\]
\end{cor}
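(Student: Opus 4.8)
The plan is to prove the formula directly in the linear case $\vep=+$ for every $n$ and every prime power $q$, and to obtain the unitary case---where $n$ is necessarily odd---from Ennola duality. Indeed, when $n$ is odd, Theorem~\ref{thm:|X|>1} furnishes a single rational function $P_{n,m,\ell}$, valid for both signs and all nontrivial $X$, with the count of factorizations equal to $\binom{\ell}{m}\cdot\frac{(|X|-1)^{\ell-m}-(-1)^{\ell-m}}{|X|}\cdot P_{n,m,\ell}(\vep q)$; so once the $\vep=+$ computation identifies $P_{n,m,\ell}(q)=[n]_q^{\ell-1}\sum_{i=0}^{\min(m,\ell-n)}(-1)^i\binom{m}{i}\qbin{\ell-i-1}{n-1}{q}$ for infinitely many prime powers $q$, this pins $P_{n,m,\ell}$ down as a rational function and the $\vep=-$ statement follows by substituting $-q$.

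\textbf{Setting up the linear case.} For $\vep=+$ I would start from \eqref{eq:simplified in det neq 1 case}, which by Proposition~\ref{prop:semisimple 0} only involves orbits $\phi$ of size $d=1$ (every semisimple-reflection factor annihilates the primary characters with $d>1$, and $\ell-m\ge1$ since $m<\ell$). Into it I substitute the needed explicit character data: the degree $\deg(\chi^{\phi,\hook{k}{n}})=q^{\binom{k+1}{2}}\qbin{n-1}{k}{q}$ from Proposition~\ref{prop:degrees}; the transvection sum $\deg(\chi^{\phi,\hook{k}{n}})\cdot[n]_q\cdot(q^{n-k-1}-1)$ from \cite[Eq.~(5.5)]{LRS}; the semisimple-reflection sum from Proposition~\ref{prop:semisimple}, equal to $(|X|-1)\cdot S_{\hook{k}{n}}(q)$ when $\theta\res_X=\one_X$ and to $-S_{\hook{k}{n}}(q)$ otherwise, where $S_{\hook{k}{n}}(q)=\deg(\chi^{\phi,\hook{k}{n}})\cdot[n]_q\cdot q^{n-k-1}$ (see the last paragraph); and the Singer-cycle value from \eqref{eq:Singer} with $Q^{\langle1\rangle}_{\langle1\rangle}=1$, which is $\chi^{\{\theta\},\hook{k}{n}}(c^{-1})=(-1)^k$ when $\theta\res_X=\one_X$ (since $\det(c)$ generates $X$) and $(-1)^k$ times a nontrivial character value on $\det(c)^{-1}$ otherwise. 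Summing over the size-$1$ orbits $\phi=\{\theta\}$, one separates the $\frac{q-1}{|X|}$ of them with $\theta\res_X=\one_X$ from the rest (over which the character values on $\det(c)^{-1}$ sum to $-\frac{q-1}{|X|}$) and invokes Proposition~\ref{prop:Singer sum}, which says the grand total over all $\phi$ vanishes; this produces the factor $\frac{q-1}{|X|}\big((|X|-1)^{\ell-m}-(-1)^{\ell-m}\big)$ and, since the exponents of $\deg(\chi^{\phi,\hook{k}{n}})$ add up to $(1-\ell)+m+(\ell-m)=1$, reduces to a single power of the degree, giving
\[
\Fac^X_{m,\ell}(c)=\binom{\ell}{m}\cdot\frac{(|X|-1)^{\ell-m}-(-1)^{\ell-m}}{|X|}\cdot\frac{(q-1)\,[n]_q^{\ell}}{|\GL^+_n|}\sum_{k=0}^{n-1}(-1)^k q^{\binom{k+1}{2}}\qbin{n-1}{k}{q}(q^{n-k-1}-1)^m(q^{n-k-1})^{\ell-m}.
\]

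\textbf{The $q$-series evaluation.} The crux is evaluating the remaining $k$-sum, which I would do exactly as in the proof of Theorem~\ref{thm:SL-enumeration-arbitrary-length} via the $q$-difference operator. Set $f(z):=(z-1)^m z^{\ell-m-1}$. By \eqref{eq:q-diff-iterate-general}, the $k$-sum equals $q^{\binom{n}{2}}(q-1)^{n-1}\,(\Delta_q^{n-1}f)(1)$; on the other hand, expanding $f(z)=\sum_{i=0}^{m}(-1)^i\binom{m}{i}z^{\ell-i-1}$ by the ordinary binomial theorem and applying \eqref{eq:q-diff-iterate-power} monomial by monomial (the $i$-th term vanishing unless $i\le\ell-n$) gives $(\Delta_q^{n-1}f)(1)=\frac{(q;q)_{n-1}}{(1-q)^{n-1}}\sum_{i=0}^{\min(m,\ell-n)}(-1)^i\binom{m}{i}\qbin{\ell-i-1}{n-1}{q}$. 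Substituting this back, using $|\GL^+_n|=q^{\binom{n}{2}}\prod_{i=1}^n(q^i-1)$, and cancelling the matching powers of $q$, the signs, and the factors $(q;q)_{n-1}$ and $q-1$ against a single power of $[n]_q$, one is left with exactly $[n]_q^{\ell-1}\sum_{i=0}^{\min(m,\ell-n)}(-1)^i\binom{m}{i}\qbin{\ell-i-1}{n-1}{q}$, the asserted formula for $\vep=+$; the unitary case then follows as explained above.

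\textbf{The main obstacle.} The one genuinely new computation---beyond invoking Theorem~\ref{thm:|X|>1} and the $q$-series lemmas already established---is pinning down the closed form $S_{\hook{k}{n}}(q)=\deg(\chi^{\phi,\hook{k}{n}})\cdot[n]_q\cdot q^{n-k-1}$ for the semisimple-reflection character sums: Proposition~\ref{prop:semisimple} expresses these through a weighted sum of Green polynomials $Q^{\langle1^{n-1}\rangle}_{\gamma'}$, and collapsing that sum to the stated product is the crux. (Alternatively, $S_{\hook{k}{n}}(q)$ can be extracted from the all-reflection character sums for $\GLnFq$ in \cite{LRS} by subtracting the transvection contribution of \cite[Eq.~(5.5)]{LRS}, or computed directly from the branching rule for unipotent characters of $\GL$.) After that, the remaining effort is routine sign- and power-bookkeeping, and the $q$-difference-operator evaluation is a mild variant of the one in Theorem~\ref{thm:SL-enumeration-arbitrary-length}---indeed simpler, since only the $d=1$ orbits occur here and there is no M\"obius-inversion term.
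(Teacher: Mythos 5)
Your proposal is correct, but it takes a genuinely different route from the paper. The paper's own proof of this corollary is a short citation argument: it quotes the known count for the full general linear group from \cite[Thm.~1.3]{LRS} (the case $X = \Fq^\times$, $q > 2$), notes that Theorem~\ref{thm:|X| > 1} forces the answer for every nontrivial $X$ and both signs to be $\binom{\ell}{m}\cdot\frac{(|X|-1)^{\ell-m}-(-1)^{\ell-m}}{|X|}\cdot P_{n,m,\ell}(\vep q)$ for a single rational function $P_{n,m,\ell}$, and then pins down $P_{n,m,\ell}$ by comparison at infinitely many prime powers; no new character evaluation is performed. You instead redo the explicit enumeration for $\vep=+$ and arbitrary nontrivial $X$ starting from \eqref{eq:simplified in det neq 1 case}, in the style of Theorem~\ref{thm:SL-enumeration-arbitrary-length}, and use Theorem~\ref{thm:|X| > 1} only for the Ennola transfer to the unitary case. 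Your bookkeeping is sound: the degrees enter with total exponent $(1-\ell)+m+(\ell-m)=1$, the Singer-cycle sums split into the trivial-on-$X$ and nontrivial-on-$X$ pieces to give the factor $\frac{q-1}{|X|}\bigl((|X|-1)^{\ell-m}-(-1)^{\ell-m}\bigr)$ exactly as in the proof of Theorem~\ref{thm:|X| > 1}, your $k$-sum is indeed $q^{\binom{n}{2}}(q-1)^{n-1}\,(\Delta_q^{n-1}f)(1)$ for $f(z)=(z-1)^m z^{\ell-m-1}$, and the final cancellation produces $[n]_q^{\ell-1}$. The ingredient you flag as the crux, $S_{\hook{k}{n}}(q)=f^{1,\hook{k}{n}}(q)\cdot[n]_q\cdot q^{n-k-1}$, is true, and your route of extracting it from \cite{LRS} is legitimate, since the character sums over semisimple reflections of fixed determinant are computed there in the course of proving their Thm.~1.3; collapsing the Green-polynomial sum defining $S_\lambda$ from scratch would be harder and is exactly the computation the paper's structure is designed to avoid. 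What your approach buys: independence from the final enumeration of \cite[Thm.~1.3]{LRS} (you need only individual character values), and a direct linear-case argument valid for all $n$ and all nontrivial $X\subseteq\Fq^\times$ without leaning on Theorem~\ref{thm:|X| > 1}, whose statement assumes $n$ odd. What the paper's approach buys: brevity, and never having to evaluate the semisimple-reflection character sums explicitly.
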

\begin{proof}
From \cite[Thm.~1.3]{LRS} we have when $\vep = +$ and $q > 2$ that 
\[
\Fac^{\Fq^\times}_{m, \ell}(c) = \binom{\ell}{m} \cdot \frac{(q - 2)^{\ell - m} - (-1)^{\ell - m}}{q - 1} \cdot [n]_q^{\ell - 1} \cdot \sum_{i = 0}^{\min(m, \ell - n)} (-1)^i \binom{m}{i} \cdot \qbin{\ell - i - 1}{n - 1}{q}
\]
(because there are exactly $\binom{\ell}{m} \cdot \frac{(q - 2)^{\ell - m} - (-1)^{\ell - m}}{q - 1}$ sequences of $\ell$ elements in $\Fq^\times$ that contain $m$ copies of $1$ and have product $\det(c)$, a fixed value other than $1$).
It follows immediately that the rational function $P_{n, m, \ell}$ in Theorem~\ref{thm:|X| > 1} satisfies
\begin{equation}\label{eq:polynomial for |X| > 1}
P_{n, m, \ell}(q) = [n]_q^{\ell - 1} \cdot \sum_{i = 0}^{\min(m, \ell - n)} (-1)^i \cdot \binom{m}{i} \cdot \qbin{\ell - i - 1}{n - 1}{q}
\end{equation}
for all prime powers $q > 2$, and therefore that this is an equality of polynomials.
\end{proof}

\begin{cor}\label{cor:general-enumeration-minimal-length}
Fix a positive integer $n$, a sign $\vep$, a prime power $q$, and a nontrivial subgroup $X$ of $\GLU_1$.  Suppose that these choices are such that the group $G_X \leq \GLU_n$ contains a Singer cycle $c$, of order $h = |X| \cdot \frac{(\vep q)^n - 1}{\vep q - 1}$.  Then the number of factorizations of $c$ as a product of the minimum number $n$ of reflections is $h^{n - 1}$.
\end{cor}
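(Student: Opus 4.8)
The plan is to deduce this from Corollary~\ref{cor:counting |X| > 1} by summing over the number of transvections used. Since every reflection in $G_X$ is either a transvection or a semisimple reflection, each factorization of $c$ as a product of $n$ reflections uses exactly $m$ transvections and $n - m$ semisimple reflections for a unique $m$ with $0 \le m \le n$, and these $n+1$ cases are disjoint. For $0 \le m \le n - 1$, I would apply Corollary~\ref{cor:counting |X| > 1} with $\ell = n$; here the condition $\min(m, \ell - n) = \min(m, 0) = 0$ collapses the inner sum to its single $i = 0$ term, which is $\qbin{n - 1}{n - 1}{\vep q} = 1$, so the number of such factorizations is $\binom{n}{m}\cdot\frac{(|X| - 1)^{n - m} - (-1)^{n - m}}{|X|}\cdot[n]_{\vep q}^{n - 1}$. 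For the remaining case $m = n$, a product of $n$ transvections lies in $\SLU_n$ and hence has determinant $1$; but $\det(c)$ generates the nontrivial group $X$, so there are no such factorizations --- which is consistent with the displayed formula, whose factor $\left((|X| - 1)^{n - m} - (-1)^{n - m}\right)/|X|$ vanishes at $m = n$ anyway. Consequently the total number of length-$n$ reflection factorizations of $c$ equals
\[
\frac{[n]_{\vep q}^{n - 1}}{|X|}\sum_{m = 0}^{n}\binom{n}{m}\left((|X| - 1)^{n - m} - (-1)^{n - m}\right).
\]

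Next I would evaluate the two resulting sums by the ordinary binomial theorem: $\sum_{m = 0}^{n}\binom{n}{m}(|X| - 1)^{n - m} = \left((|X| - 1) + 1\right)^n = |X|^n$, while $\sum_{m = 0}^{n}\binom{n}{m}(-1)^{n - m} = (-1 + 1)^n = 0$ since $n \ge 1$. Substituting, the count becomes $[n]_{\vep q}^{n - 1}\cdot|X|^{n - 1} = \left(|X|\cdot[n]_{\vep q}\right)^{n - 1}$. Finally, since $[n]_{\vep q} = \frac{(\vep q)^n - 1}{\vep q - 1}$, Proposition~\ref{prop:Singer cycles exist} (or the hypothesis of the corollary itself) gives $h = |X|\cdot\frac{(\vep q)^n - 1}{\vep q - 1} = |X|\cdot[n]_{\vep q}$, so the count is exactly $h^{n - 1}$, as claimed.

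I do not expect a genuine obstacle here: all of the substantive work has already been carried out in Theorem~\ref{thm:|X| > 1} and Corollary~\ref{cor:counting |X| > 1}, and what remains is a short bookkeeping argument. The only points that require any care are the treatment of the boundary term $m = n$ (which must be argued separately, since Corollary~\ref{cor:counting |X| > 1} is stated only for $m < \ell$) and the clean telescoping of the binomial sums into a single power. One might also add a sentence noting that the hypothesis ``$G_X$ contains a Singer cycle'' is exactly what is needed to invoke Corollary~\ref{cor:counting |X| > 1}, so no separate case analysis on $n$, $q$, or $\vep$ is required.
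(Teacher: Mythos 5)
Your proposal is correct and is essentially the paper's own proof: sum Corollary~\ref{cor:counting |X| > 1} at $\ell = n$ over $m$, note the $q$-binomial sum collapses to $1$, and apply the binomial theorem to get $[n]_{\vep q}^{n-1}|X|^{n-1} = h^{n-1}$. Your explicit treatment of the boundary case $m = n$ is a minor extra care that the paper handles implicitly (its sum simply stops at $m = n - 1$, and the omitted term vanishes anyway).
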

\begin{proof}
By Corollary~\ref{cor:counting |X| > 1}, the desired number of factorizations is
\begin{align*}
[n]_{\vep q}^{n - 1}\sum_{m = 0}^{n - 1} \binom{n}{m} \frac{(|X| - 1)^{n - m} - (-1)^{n - m}}{|X|} & = \frac{[n]_{\vep q}^{n - 1}}{|X|} \cdot \left(\left((|X| - 1) + 1\right)^n - ((-1) + 1)^n\right) \\
& \overset{(n > 0)}{=} [n]_{\vep q}^{n - 1} \cdot |X|^{n - 1},
\end{align*}
as claimed.
\end{proof}

We now collect the preceding results into the proofs of the three main theorems in the introduction.

\begin{proof}[Proof of Main Theorems~\ref{main:short} and~\ref{main:ennola}]
In the case that $G = \SLU_n$, these results are precisely the content of Corollary~\ref{cor:SL-enumeration-minimum-length}.  In the case of groups $G$ such that $\SLU_n < G \leq \GLU_n$, the enumerative formula in Main Theorem~\ref{main:short} is given by Corollary~\ref{cor:general-enumeration-minimal-length}, while the fact that this is an Ennola duality for $\GLU_n$ follows from this after making the substitution $|X| = |\GLU_1| = q - \vep$ and observing that the condition that Singer cycles exist in the unitary groups means that $n$ is odd, so the outer exponent $n - 1$ is even.
\end{proof}

\begin{proof}[Proof of Main Theorem~\ref{main:longer factorizations}]
In the case that $G = \SLU_n$, this result is precisely the content of Theorem~\ref{thm:SL-enumeration-arbitrary-length}.  In the other cases, we have by Corollary~\ref{cor:counting |X| > 1} that
\[
\Fac^X_\ell(c) = \sum_{m = 0}^{\ell - 1}\binom{\ell}{m} \cdot \frac{(|X| - 1)^{\ell - m} - (-1)^{\ell - m}}{|X|} \cdot [n]_{\vep q}^{\ell - 1} \cdot \sum_{i = 0}^{\min(m, \ell - n)} (-1)^i \binom{m}{i} \qbin{\ell - i - 1}{n - 1}{\vep q}.
\]
Reversing the order of summation, this becomes
\begin{align*}
\Fac^X_\ell(c) & = 
\frac{[n]_{\vep q}^{\ell - 1}}{|X|} 
\sum_{i = 0}^{\ell - n} (-1)^i \qbin{\ell - i - 1}{n - 1}{\vep q}
\sum_{m = i}^{\ell - 1} \binom{\ell}{m}\binom{m}{i} \left((|X| - 1)^{\ell - m} - (-1)^{\ell - m}\right) \\
& = \frac{[n]_{\vep q}^{\ell - 1}}{|X|} 
\sum_{i = 0}^{\ell - n} (-1)^i \qbin{\ell - i - 1}{n - 1}{\vep q}\binom{\ell}{i}
\sum_{m = i}^{\ell - 1} \binom{\ell - i}{\ell - m} \left((|X| - 1)^{\ell - m} - (-1)^{\ell - m}\right) \\
& = \frac{[n]_{\vep q}^{\ell - 1}}{|X|} 
\sum_{i = 0}^{\ell - n} (-1)^i \binom{\ell}{i}\qbin{\ell - i - 1}{n - 1}{\vep q} \left( (|X|^{\ell - i} - 1) - (0 - 1)\right) \\
& = [n]_{\vep q}^{\ell - 1} \cdot
\sum_{i = 0}^{\ell - n} (-1)^i \binom{\ell}{i}\qbin{\ell - i - 1}{n - 1}{\vep q} |X|^{\ell - i - 1},
\end{align*}
as claimed.
\end{proof}

\section{Further remarks and open questions}
\label{sec:final remarks}

We end with some open questions suggested by our work.

\subsection{Refining by eigenvalues}
In \cite[Thm.~1.3]{LRS}, a more refined version of Corollary~\ref{cor:counting |X| > 1} was given in the case of the group $\GLnFq$; it provides the enumeration of reflection factorizations of a Singer cycle in $\GLnFq$ where the factors have a specified sequence of eigenvalues.  It was observed in particular that the resulting enumeration depends only very slightly on the specified sequence of eigenvalues, namely, it is determined by the length and the number of transvection factors (provided that the product of the sequence of determinants agrees with the determinant of the Singer cycle being factored).  It is natural to conjecture that the same statement is true in the unitary case.

\begin{conjecture}
    Let $c$ be any irreducible element in $\GLU_n$ and let $(a_1, \ldots, a_\ell)$ be a sequence of elements of $\GLU_1$ such that $a_1 \cdots a_\ell = c$.  Suppose that precisely $m$ of the $a_i$ are equal to $1$.  Then the number of factorizations $c = t_1 \cdots t_\ell$ of $c$ as a product of $\ell$ reflections such that $\det(t_i) = a_i$ for $i = 1, \ldots, \ell$ is
    \[
    [n]_{\vep q}^{\ell - 1} \cdot \sum_{i = 0}^{\min(m, \ell - n)} (-1)^i \cdot \binom{m}{i} \cdot \qbin{\ell - i - 1}{n - 1}{\vep q}.
    \]
\end{conjecture}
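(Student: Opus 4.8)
The plan is to run the character-sum method of Proposition~\ref{general factorization prop} just as in the proof of Corollary~\ref{cor:counting |X| > 1}, now with factor sets $A_i := \{t \text{ a reflection in } \GLU_n : \det(t) = a_i\}$. Each $A_i$ is a single conjugacy class (the transvections if $a_i = 1$, the semisimple reflections of determinant $a_i$ otherwise), so Proposition~\ref{general factorization prop} applies; and since $c$ (hence $c^{-1}$) is irreducible, Proposition~\ref{prop:mostly zero} leaves only the primary characters $\chi^{\phi, \hook{k}{n/d}}$ with $d = |\phi| \mid n$, for which the values on $c^{-1}$ and on reflections are supplied by \eqref{eq:Singer}, \eqref{eq:semisimple}, Corollary~\ref{cor:transvections}, and Propositions~\ref{prop:degrees} and~\ref{prop:semisimple 0}. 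The argument then divides into the cases $m < \ell$ and $m = \ell$.

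Suppose first that $m < \ell$, so some $a_{i_0} \neq 1$. On the semisimple reflections of determinant $a_{i_0}$ every primary character with $|\phi| > 1$ vanishes (Proposition~\ref{prop:semisimple 0}), so only the characters $\chi^{\{\theta\}, \hook{k}{n}}$ indexed by a singleton orbit $\{\theta\}$ survive. Substituting the degree (Proposition~\ref{prop:degrees}), the transvection sum (Corollary~\ref{cor:transvections}), the semisimple-reflection value \eqref{eq:semisimple}, and $\chi^{\{\theta\},\hook{k}{n}}(c^{-1})$ from \eqref{eq:Singer}, two simplifications occur: all powers of $\vep$ collapse to $\vep^{2\lfloor n/2 \rfloor} = 1$ (using $\binom{n}{2} \equiv \lfloor n/2\rfloor \bmod 2$); and, because $\theta$ is $\Frob$-fixed it factors through the norm $T_n \to \GLU_1$, so $\theta$ on the eigenvalue of $c^{-1}$ equals $\theta(\det c)^{-1}$ while the product over the factors carries $\prod_i \theta(a_i) = \theta(\det c)$, whence $\theta$ enters only through $\theta(\det c)^{-1}\theta(\det c) = 1$ and the sum over the $q-\vep$ characters of $\GLU_1$ merely multiplies by $q-\vep$. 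What remains is a single sum over the hook index $k$ whose value $N = N(n,q,\vep,m,\ell)$ depends on neither $c$ nor the chosen sequence. To pin $N$ down, take $c$ to be a $\GLU_n$-Singer cycle and sum $N$ over all sequences in $\GLU_1$ with exactly $m$ ones and product $\det c$: the total equals $\Fac^{\GLU_1}_{m,\ell}(c)$ as evaluated in Corollary~\ref{cor:counting |X| > 1} (each such factorization of $c$ contributes to exactly one admissible determinant sequence), the number of admissible sequences is $\binom{\ell}{m}\bigl((q-\vep-1)^{\ell-m} - (-1)^{\ell-m}\bigr)/(q-\vep)$ by an elementary inclusion--exclusion, and dividing produces precisely the claimed formula. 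Since $N$ and that formula are both rational functions of $q$ for fixed $(n,\vep,m,\ell)$, agreement for all sufficiently large prime powers forces agreement for all of them, which also disposes of degenerate small cases. Note that this computation of $N$ never used $\det c \neq 1$, so it also settles $m < \ell$ when $\det c = 1$.

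The remaining case $m = \ell$ forces $a_1 = \cdots = a_\ell = 1$ and $\det c = 1$: one counts length-$\ell$ transvection factorizations of an arbitrary irreducible $c \in \SLU_n$. For Singer cycles of $\SLU_n$ this is Theorem~\ref{thm:SL-enumeration-arbitrary-length}, but in general all orbit sizes $d \mid n$ now contribute, the size-$d$ term being $C_d(n,q,\vep,\ell)\cdot S_d(c)$, where $C_d := \sum_k (\pm1)\,\vep^{(\cdots)}\deg(\chi^{\phi,\hook{k}{n/d}})^{1-\ell}\bigl(\sum_{t\text{ transv.}}\chi^{\phi,\hook{k}{n/d}}(t)\bigr)^{\ell}$ and $S_d(c) := \sum_{|\phi|=d}\sum_{\theta\in\phi}\theta(c)$. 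Crucially $S_d(c)$ genuinely depends on $c$: Möbius inversion writes it through $S'_e(c) = (q^e-\vep^e)\bigl[\,\mathrm{ord}(c) \mid [n]_{\vep q}/[e]_{\vep q}\,\bigr]$, whose bracket is not constant on the irreducible elements of $\SLU_n$ once $n$ is composite. Rewriting $\sum_{d\mid n}C_d\, S_d(c) = \sum_{e\mid n}D_e\,S'_e(c)$ with $D_e := \sum_{e\mid d\mid n}\mu(d/e)\,C_d$, the conjecture in this case is equivalent to the identities $D_e = 0$ for every divisor $e$ of $n$ with $1 < e < n$ (granting which, the count collapses to $\tfrac{q-\vep}{|\GLU_n|}D_1$, already known from the Singer-cycle case via Proposition~\ref{prop:Singer sum} and Theorem~\ref{thm:SL-enumeration-arbitrary-length}). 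Establishing these vanishings is the crux, and the plan is to expand each $C_d$, via Proposition~\ref{prop:degrees} and Corollary~\ref{cor:transvections}, into an explicit alternating sum of products of the polynomials $f^{d,\hook{k}{n/d}}$ and $R_{n,\hook{k}{n/d}}$ (the latter assembled from Green polynomials $Q^{\langle 2,1^{n-2}\rangle}_{d\gamma}$) and then to prove the cancellation by the $q$-difference-operator and $q$-binomial-theorem manipulations already used for Theorem~\ref{thm:SL-enumeration-arbitrary-length}, now carrying the divisor $d$ through the computation; nothing is needed here when $n$ is prime, so the real difficulty concerns only composite $n$.
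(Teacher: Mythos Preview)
This statement is a \emph{conjecture} in the paper's final ``open questions'' section; the paper does not prove it, so there is no proof to compare against.  That said, your proposal divides naturally into two cases with very different statuses.

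Your treatment of the case $m<\ell$ is convincing and, as far as I can see, constitutes a complete proof of that part of the conjecture.  The key observation---that for a size-$1$ orbit $\{\theta\}$ the character $\theta$ enters the Singer-side value only through $\theta(\det c)^{-1}$ (since $\theta$ on $T_n$ is $\theta\circ N_{n,1}$ and $N_{n,1}(c)=\det c$), while the product of reflection-side values carries $\prod_i\theta(a_i)=\theta(\det c)$---is exactly right, and it cleanly eliminates all dependence on both $c$ and the particular sequence $(a_i)$.  The bootstrap from Corollary~\ref{cor:counting |X| > 1} to identify the resulting constant $N$ is sound, and the rational-function-in-$q$ argument handles the degenerate small-$q$ cases.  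This goes beyond what the paper establishes.

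The case $m=\ell$ is another matter.  You correctly identify that the full count is $\sum_{d\mid n}C_d\,S_d(c)=\sum_{e\mid n}D_e\,S'_e(c)$ and that the conjecture would follow from the vanishing $D_e=0$ for $1<e<n$.  But you then stop at a plan: ``expand each $C_d$ \ldots\ and prove the cancellation by the $q$-difference-operator and $q$-binomial-theorem manipulations already used for Theorem~\ref{thm:SL-enumeration-arbitrary-length}, now carrying the divisor $d$ through.''  This is precisely the missing idea.  The computation in Theorem~\ref{thm:SL-enumeration-arbitrary-length} works because for the $\SLU_n$-Singer cycle only the $e=1$ term survives, so one never has to control the individual $D_e$; extracting each $D_e$ separately requires disentangling the Green-polynomial contributions $Q^{\langle 2,1^{n-2}\rangle}_{d\gamma}$ across different $d$, and nothing in the paper's toolkit obviously does this.  (Indeed, the paper's own Section~4.5, in trying to push the $\SLU_n$ count to arbitrary irreducible $c$, asserts without proof that $S'_d(c)=0$ for $d>1$---an assertion that appears to fail already for $n=4$, $q=2$, $r=3$---so you are right to treat this as a genuine obstacle rather than a formality.)  Until the identities $D_e=0$ are actually established, the $m=\ell$ case remains open, and with it the conjecture as a whole.
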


This is consistent with Corollary~\ref{cor:counting |X| > 1}: in any finite group $G$, for any fixed element $G$ other than the identity, the number of tuples $(x_1, \ldots, x_k)$ of non-identity elements of $G$ such that $x_1 \cdots x_k = x$ is $\frac{(|G| - 1)^k - (-1)^k}{|G|}$, and Corollary~\ref{cor:counting |X| > 1} would follow immediately after combining this fact with the conjecture.

\subsection{Another approach to character values on reflections}

In \cite{HLR, LewisMorales}, an alternative strategy was employed for computing character values on reflections that considered
\[
\sum_{g : \dim \fix(g) = k} \chi(g)
\]
for all $k$.  This latter approach is more general, because $k$ need not be $n - 1$, but less refined, because it does not separate reflections by type (semisimple versus unipotent).  These character sums have simple, attractive formulae for many $\GLnFq$-characters.  Experimentation suggests that this phenomenon extends to the unitary case.
\begin{conjecture}
  Suppose $\chi = \chi^{\phi, \lambda}$ is a primary irreducible character for $\GUnFq$, where $\phi$ is any orbit other than $\{\one\}$ (where $\one$ is the trivial character of $\GU_1(q)$).  Then
    \[
    \sum_{g : \dim \fix(g) = k} \frac{\chi(g)}{\chi(1)} = (-1)^k (-q)^{\binom{k}{2}} \qbin{n}{k}{-q}.
    \]
\end{conjecture}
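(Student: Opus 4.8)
The plan is to deduce the identity for $\GUnFq$ from its counterpart for $\GLnFq$, which is established in \cite{HLR, LewisMorales}, by proving the relevant Ennola duality, in the spirit of Theorems~\ref{thm:SLU Ennola} and~\ref{thm:|X| > 1}. Fix $n$ and $k$ and regard $S_\vep(q) := \sum_{g\colon\dim\fix(g)=k}\chi^{\phi,\lambda}(g)$ as a function of the sign $\vep$ and of $q$. The elements of $\GLU_n$ with $\dim\fix(g)=k$ form the union of those conjugacy classes whose indexing multipartition $\llambda$ (Section~\ref{sec:conjugacy classes}) assigns to the orbit of the eigenvalue $1$ a partition with $k$ parts, and this description is identical for $\vep=+$ and $\vep=-$. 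The aim is to show that for every such class both the class size and the value $\chi^{\phi,\lambda}(g)$ are, up to an explicit sign depending only on $n$, $k$, $|\phi|$, $\lambda$, evaluations at $\vep q$ of polynomials that do not depend on $\vep$; summing and comparing $S_+(q)$ with the known $\GLnFq$ answer then determines the polynomial, and the substitution $q\mapsto -q$ yields the $\GUnFq$ value.

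For class sizes this follows from the centralizer descriptions of Section~\ref{sec:centralizers} and \eqref{eq:GLU cardinality}. For character values I would expand $\chi^{\phi,\lambda}(g)$ via \eqref{eq:primary char} and \eqref{eq:induction}, repeating the reductions carried out in the proofs of Corollary~\ref{cor:transvections} and Proposition~\ref{prop:semisimple} but now for an arbitrary conjugacy class $g=su$: the Deligne--Lusztig characters involved are induced from maximal tori, so in \eqref{eq:induction} every $C_{{}^hL}(s)$ is the torus itself (hence consists of semisimple elements), the innermost sum collapses to its $v=1$ term, and each two-variable Green function $\Grfn^{C_{\GLU_n}(s)}_{{}^h\TTT}(u,1)$ becomes the one-variable Green function $\Grfn^{C_{\GLU_n}(s)}_{{}^h\TTT}(u)$, which by Theorem~\ref{thm:ennola} equals a product of Green-polynomial evaluations $Q^{\mu(i)}_{\gamma(i)}\bigl((\vep q)^{d_i}\bigr)$ --- polynomials in $\vep q$ independent of $\vep$. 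The degree $\chi^{\phi,\lambda}(1)$ is such an evaluation by Proposition~\ref{prop:degrees}. Assembling these, as in the passage from \eqref{eq:simplification 1} to \eqref{eq:Singer}, gives $S_\vep(q)=(\text{sign})\cdot\chi^{\phi,\lambda}(1)\cdot P_{n,k,|\phi|,\lambda}(\vep q)$; matching $S_+(q)$ with \cite{HLR, LewisMorales} forces $(\text{sign})\cdot P_{n,k,|\phi|,\lambda}$ to be the explicit, $|\phi|$- and $\lambda$-independent $\GLnFq$ expression, and evaluating at $-q$ finishes the proof.

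The main obstacle is verifying that this assembly is genuinely $\vep$-uniform. Each ingredient --- the Deligne--Lusztig formula, Kawanaka's Theorem~\ref{thm:ennola}, and the degree formula --- is individually Ennola-dual, but over the large family of classes with $\dim\fix(g)=k$ one must control the signs $\vep^{d\cdot a(\lambda')+\lfloor n/2\rfloor}$ of \eqref{eq:primary char} and those of Proposition~\ref{prop:degrees}, and check that the Frobenius-orbit combinatorics indexing the classes and the tori $\TTT_\gamma$ inside \eqref{eq:induction} match between the linear and unitary cases; unlike in Theorems~\ref{thm:SLU Ennola} and~\ref{thm:|X| > 1}, one cannot here invoke oddness of $n$ to absorb stray signs. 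One must also isolate the role of the hypothesis $\phi\neq\{\one\}$, which is what makes the answer independent of $|\phi|$ and $\lambda$: for $\GLnFq$ this can be seen directly, since with $P_W$ the pointwise stabilizer of an $m$-dimensional subspace $W$ (a semidirect product of the unipotent radical of a maximal parabolic with a smaller general linear group) one has $\sum_{g\in\GLnFq}\chi(g)\qbin{\dim\fix(g)}{m}{q}=\frac{|\GLnFq|}{|\GL_m(q)|}\langle\chi|_{P_W},\one\rangle$, and the Harish-Chandra restriction of $\chi^{\phi,\lambda}$ to a Levi $\GL_m\times\GL_{n-m}$ never contains a constituent of the form $\sigma\boxtimes\one_{\GL_{n-m}}$ when $\phi\neq\{\one\}$, so this quantity vanishes for $0\le m<n$ and equals $\chi(1)$ for $m=n$, whence $q$-binomial inversion recovers the $\GLnFq$ identity. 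Porting this self-contained argument directly to $\GUnFq$ is awkward, because there the pointwise stabilizer of a subspace is parabolic only for totally isotropic subspaces while the isometry type of $\fix(g)$ varies with $g$ --- which is why reducing to the linear case via Ennola duality appears to be the more tractable route.
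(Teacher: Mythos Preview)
First, note that the statement you are attempting is listed in the paper as a \emph{conjecture} (in the section of further remarks), not a theorem: the paper offers no proof, only the sentence ``Experimentation suggests that this phenomenon extends to the unitary case.''  So there is nothing to compare your argument against; the question is simply whether your outline could be made into a proof.

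The central gap is your assertion that, for an arbitrary conjugacy class $g=su$, the value $\chi^{\phi,\lambda}(g)$ is (up to a sign depending only on $n,k,|\phi|,\lambda$) the evaluation at $\vep q$ of a polynomial independent of $\vep$.  This is false in general: after carrying out the Deligne--Lusztig reduction you describe, what remains is a product of Green-polynomial evaluations \emph{times a sum of values $\theta(\,\cdot\,)$} on conjugates of the semisimple part $s$, exactly as in the passage from \eqref{eq:simplification 3} to \eqref{eq:Singer}.  These values are roots of unity that depend on the particular $\theta\in\phi$ and on the particular semisimple class, not on $q$ in any polynomial way.  In the paper's own computations this dependence was eliminated only in very special situations: for transvections because $s=1$ (so $\theta(1)=1$; Corollary~\ref{cor:transvections}), for semisimple reflections by first summing $\theta$ over a full cyclic group to invoke character orthogonality (Proposition~\ref{prop:semisimple}), and for Singer cycles by summing over all $\phi$ of a given size (Proposition~\ref{prop:Singer sum}).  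None of these shortcuts is available when you sum over the union of all classes with $\dim\fix(g)=k$ for a \emph{fixed} $\phi$.

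Consequently the Ennola transfer cannot be done class by class; you would first have to execute the sum over classes far enough to kill the $\theta$-dependence, and do so in a manner that is uniform in $\vep$.  That interacts badly with the orbit-combinatorics issue you yourself flag: the conjugacy classes with $\dim\fix(g)=k$ are indexed by multipartitions on $\Phi_\vep$, and the number of $F_\vep$-orbits of each size in $\KK^\times$ is \emph{not} a polynomial in $\vep q$ (for odd $d$ one gets $q^d-1$ on the linear side but $q^d+1$ on the unitary side).  So neither the indexing set nor the summand is $\vep$-uniform at the level of granularity your argument needs.  Your closing Harish-Chandra argument for $\GLnFq$ is attractive, and you correctly identify why it does not transplant to $\GUnFq$; but absent a replacement, the proposal as it stands does not close the gap, which is presumably why the statement remains a conjecture in the paper.
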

The formula on the right side of the conjecture is the result of plugging $-q$ in for $q$ in the corresponding formula in \cite[Prop.~4.10(i)]{HLR}.

\subsection{A factorization poset, and a connection with invariant theory?}

To each pair $(G, R)$ of a group $G$ and a generating set $R$, there is a naturally associated partial order (poset) structure (as in, e.g., \cite{HLR, MuhleRipoll, LW2}): for $g \in G$, one writes $\ell_R(g)$ for the \defn{$R$-length} of $g$, i.e., the smallest number $k$ such that $g = r_1 \cdots r_k$ with $r_1, \ldots, r_k \in R$, and one sets $g \leq h$ in the partial order if and only if $\ell_R(h) = \ell_R(g) + \ell_R(g^{-1}h)$.  In the case that $G$ is a finite Coxeter group and $R$ is the set of reflections in $G$, this poset is called the \defn{absolute order} on $G$, the interval $[1, c]$ in this poset between the identity and a Coxeter element $c$ is the \defn{lattice of $G$-noncrossing partitions}, and the factorization-counting theorems of Hurwitz--D\'enes--Chapoton--Bessis mentioned in the introduction give the number of maximal chains in this lattice.  In the real and complex cases, the numerology of the invariant theory of $G$ (degrees, exponents, etc.) is closely tied to the combinatorics of the lattice (rank sizes, chain counts, etc.)---see, for example, \cite{Armstrong, ChapuyDouvropoulos}.

In the case that $G = \GLnFq$ and $R$ is the set of reflections of $G$, various enumerative and structural properties of the poset were explored in \cite{HLR}.  Can the same be done for the other groups considered here?  How does the numerology compare to the numerology coming from invariant theory, as in \cite{HansonShepler}?

\subsection{Extension to other groups}

It is natural to ask to what extent the main theorem can be extended more generally, say to other finite groups of Lie type.  Huppert \cite{Huppert1970} showed that the finite symplectic groups $\op{Sp}_{2n}(q)$ and the finite minus-type orthogonal groups $\op{O}^-_{2n}(q)$ of even rank contain irreducible elements, ergo Singer cycles (and that the other orthogonal types do not).  Computational exploration in small cases suggests the following conjecture.

\begin{conjecture} \label{conj:symplectic and orthogonal}
    Suppose that $G$ is either the finite symplectic group $\op{Sp}_{N}(q)$ or orthogonal group $\op{O}^-_{N}(q)$ and that $c$ is a Singer cycle in $G$ (so $N = 2n$ is even).  Let $h := |c|$ be the multiplicative order of $c$.  Then the number of factorizations of $c$ as a product of the minimum number $N$ of reflections in $G$ is $h^{N - 1}$.
\end{conjecture}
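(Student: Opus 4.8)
The plan is to follow the same two-part strategy used above for the linear and unitary groups. Fix a Singer cycle $c$ in $G$; it generates a \emph{Coxeter torus} $T_c$ of order $q^n+1$, i.e., the maximal torus $T_w$ attached to a Coxeter element $w$ (a ``negative $n$-cycle'' in the Weyl group, of type $C_n$ for the symplectic group and of twisted type ${}^2D_n$ for the orthogonal group), for which the relative Weyl group of $T_c$ is cyclic of order $2n$. Since $c$ is regular (its eigenvalues over $\overline{\Fq}$ are distinct, forming a single Galois orbit of size $N=2n$) and elliptic (it lies in no proper $F$-stable Levi subgroup), the Deligne--Lusztig theory of \cite{DeligneLusztig} (see also \cite{Carter}) forces $\chi(c)=0$ for every irreducible character $\chi$ of $G$ except those occurring as a constituent of a Deligne--Lusztig character $R^G_{T_c}(\theta)$ for some $\theta\in\widehat{T_c}$; and for such $\chi$ the value $\chi(c)$ is determined by $R^G_{T_c}(\theta)(c)=\sum_w\theta^w(c)$ together with the multiplicity of $\chi$ in $R^G_{T_c}(\theta)$. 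Since that relative Weyl group is cyclic, the multiplicity is controlled by the stabilizer of $\theta$ in a cyclic group --- exactly the role played by the Murnaghan--Nakayama rule and the ``hook'' condition in the proof of Proposition~\ref{prop:mostly zero} --- so one expects a short explicit list of ``Coxeter-type'' characters, indexed by Galois orbits of characters of $T_c$ together with a hook-like parameter.

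For each such character $\chi$ I would then compute the three ingredients of Frobenius's formula \eqref{frobenius factorization equation}: the degree $\chi(1)$, the value $\chi(c^{-1})$, and the reflection sum $\sum_t\chi(t)$. The first two follow from the Deligne--Lusztig character formula \eqref{eq:induction}--\eqref{eq:primary char} together with the relevant generic-degree polynomials and the value of the Green function $\Grfn^G_{T_c}$ at the identity, playing the roles of $f^{d,\lambda}$ and $Q^{\langle1\rangle}_{\langle1\rangle}$ above. For the reflection sum one needs Green function values on a reflection: in $\op{Sp}_N(q)$ every reflection is a symplectic transvection, forming a single conjugacy class whose centralizer is the stabilizer of a nonzero vector, an extension of $\op{Sp}_{N-2}(q)$ by a $q$-group; in $\op{O}^-_N(q)$ the reflections are the orthogonal reflections (or, in characteristic $2$, orthogonal transvections), with centralizers of the shape $\langle t\rangle\times\op{O}_{N-1}(q)$. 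In each case the required Green functions are available from the work of Lusztig, Shoji and Srinivasan on Green functions of classical groups, and restricted to the Coxeter torus they take a simple product form. Substituting everything into \eqref{frobenius factorization equation} and simplifying --- via a $q$-binomial identity and the elementary identity $\sum_{1<d\mid N}\mu(d)=-1$, exactly as in the proof of Theorem~\ref{thm:SL-enumeration-arbitrary-length} --- should collapse the sum to $h^{N-1}$. One might also hope to shorten this step by establishing polynomiality of the character sums in $q$ and exploiting an Ennola-type ($q\mapsto -q$) relationship between the symplectic and orthogonal answers.

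The principal obstacle, I expect, is genuinely harder than anything in the $\GLU_n$ case, for two reasons. First, the character theory of $\op{Sp}_N(q)$ and $\op{O}^-_N(q)$ is not uniform: Lusztig's Jordan decomposition introduces nontrivial families, the virtual characters $R^G_{T_c}(\theta)$ need not be multiplicity-free or equal to a single genuine character, and one must track both the Lusztig series $\mathcal{E}(G,s)$ attached to $\theta$ and the accompanying unipotent piece --- so pinning down the exact list of characters that survive on $c$, and their values on $c$ and on reflections, is the crux. Second, although $\op{Sp}_N$ is simply connected --- so that, as in Remark~\ref{rem:connected}, centralizers of semisimple elements are connected and the standard Deligne--Lusztig machinery applies directly --- the group $\op{O}^-_N$ is \emph{disconnected}, with identity component $\op{SO}^-_N$ that need not even contain the Singer cycle, forcing one either into the Deligne--Lusztig theory of disconnected groups or into a Clifford-theory argument over the index-two subgroup $\op{SO}^-_N(q)$; the even-characteristic case, where orthogonal reflections are transvections, would require separate bookkeeping throughout. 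Once these structural points are settled, the final $q$-series manipulation should be routine.
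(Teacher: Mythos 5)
This statement is not proved in the paper at all: it is stated as an open conjecture, and the authors explicitly remark that the character-theoretic route you outline ``seems intractable in this case,'' because the representation theory of $\op{Sp}_{2n}(q)$ and $\op{O}^-_{2n}(q)$ is far less explicit than that of $\GLU_n$ and there is no known analogue of the sparsity result (Proposition~\ref{prop:mostly zero}) in these types. Your proposal is a plan rather than a proof, and the gap is exactly at the steps you yourself flag as the crux: you never actually determine which irreducible characters are nonzero on $c$ (the decomposition of $R^G_{T_c}(\theta)$ in types $C_n$ and ${}^2D_n$ involves Lusztig families and is not multiplicity-free, so the clean ``hook-indexed'' list from the $\GLU_n$ case has no established counterpart), you do not compute the reflection sums $\sum_t\chi(t)$ (the needed Green-function values on transvections/reflections in these groups are not written down in a form that visibly collapses the Frobenius sum), and the final simplification to $h^{N-1}$ is asserted to be ``routine'' without being carried out. ``Should collapse'' and ``one expects'' are precisely the parts that would constitute the proof; as it stands nothing beyond the (correct) vanishing of $\chi(c)$ off the Lusztig series attached to the Coxeter torus is actually established.

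Two smaller points. First, your concern that $\op{SO}^-_N(q)$ ``need not even contain the Singer cycle'' is misplaced: the cyclic irreducible subgroup of order $q^n+1$ has determinant $1$ (it is a maximal torus of $\op{SO}^-_{2n}(q)$), so the Singer cycle lies in the identity component; the genuine disconnectedness issue is that the reflections have determinant $-1$, forcing Clifford theory or Deligne--Lusztig theory for disconnected groups exactly as you say. Second, the paper itself points toward different strategies that your proposal does not pursue: an elementary/bijective attack in the spirit of Brady--Watt, Wall, and McCammond--Paolini (parametrizing minimum-length orthogonal factorizations by flags in the move space), or a uniform argument along the lines of Michel and Douvropoulos; if you want to make progress on this conjecture, those routes are at least as promising as pushing the character computation through types $C_n$ and ${}^2D_n$.
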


Unfortunately, the approach we've taken above for the unitary and linear groups seems intractable in this case: the representation theory of $\op{Sp}_{2n}(q)$ and $\op{O}^{-}_{2n}(q)$ is much less explicit than that of $\GLU_n$, and there does not seem to be any analogue of the sparsity theorem Proposition~\ref{prop:mostly zero} in these types.

On the other hand, it might be possible to attack the orthogonal and symplectic cases by more elementary means.  In \cite{BradyWatt}, Brady and Watt show that if $V$ is a vector space and $\langle \cdot, \cdot\rangle$ is an anisotropic\footnote{That is, $\langle v, v \rangle = 0$ implies that $v$ is the zero vector.} nondegenerate symmetric bilinear form, then the set of factorizations of any orthogonal transformation $g$ as a product of the minimum number of orthogonal reflections is in bijection with the maximal flags of subspaces in the \emph{move space} $\mov(g) := \im(g - 1)$.  Over a finite field, it is too much to hope that a form might be anisotropic. However, one could try to follow the work of \cite{McCammondPaolini}, building on earlier work of Wall \cite{Wall}, to give a bijection between factorizations of a Singer cycle and appropriate linear-algebraic objects (certain special flags or ordered bases in $V$), and then to try to count these directly. 

If one could prove Conjecture~\ref{conj:symplectic and orthogonal} by such techniques, it would provide nontrivial information about character sums in the orthogonal and symplectic groups, which might be of interest on their own!

Alternatively, one could seek \emph{uniform} proofs that work simultaneously for all finite groups of Lie type (say).  In the case of real and complex reflection groups, 
this has been carried out by Michel \cite{Michel} (for Weyl groups) and Douvropoulos \cite{Douvropoulos} (in general), including the case of factorizations of arbitrary length (analogous to our Main Theorem~\ref{main:longer factorizations}, and originally proved case-by-case using the classification \cite{ChapuyStump}).  We have not explored to what extent similar ideas can be employed in our context.

\subsection{Factoring other irreducible elements}
\label{rmk:things okay for other irreducibles}

It is natural to ask to what extent the given results depend specifically on the fact that $c$ is a Singer cycle, rather than an arbitrary irreducible element.  The multiplicative order of $c$ is used explicitly in the proof of Proposition~\ref{prop:Singer sum}, but this is not essential: if $c$ is an irreducible element in $G_X$ that is the $r$th power of a $\GLU$-Singer cycle $c'$, then (continuing the notation of the proof of the proposition) the same argument implies that
\[
S'_d(c) = \sum_{\zeta \in \CC\colon \zeta^{q^d - \vep^d} = 1} \zeta^{r} = \begin{cases} q^d - \vep^d & \text{ if } q^d - \vep^d \mid r, \\ 0 & \text{ o.w.}\end{cases}
\]
However, if $q^d - \vep^d \mid r$ and $d > 1$, one can show (after a little case analysis) that $(c')^r$ is not irreducible.  Thus, we conclude that $S'_d(c) = 0$ unless $d = 1$ and $q - \vep \mid r$, i.e., unless $d = 1$ and $\det(c) = 1$.  Thus we conclude the following strengthened version of Proposition~\ref{prop:Singer sum}.
\begin{prop}
 Let $c$ be an irreducible element in $\GLU_n$. For each positive integer $d \mid n$ and each hook-shaped partition $\hook{k}{n/d}$ of $n/d$, we have
    \[
    \sum_{\substack{\phi \in \charorbs: \\ |\phi| = d}} \chi^{\phi, \hook{k}{n/d}}(c)
    =
    \begin{cases}
    \pm Q^{\langle1\rangle}_{\langle1\rangle}((\vep q)^n) \cdot (q - \vep) \cdot \mu(d) & \text{ if } d = 1 \text{ and } \det(c) = 1\\
    0 & \text{ otherwise},
\end{cases}
    \]
 with the same sign as in Proposition~\ref{prop:Singer sum}.
\end{prop}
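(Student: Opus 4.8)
The plan is to run the proof of Proposition~\ref{prop:Singer sum} almost verbatim, isolating the one place where the ``Singer cycle'' hypothesis actually entered. Every step up to and including equation~\eqref{eq:Singer} used only that $c$ is an irreducible element of $\GLU_n$: its characteristic polynomial is irreducible of degree $n$, its centralizer in $\GLU_n$ is the Singer subgroup containing it (Section~\ref{sec:centralizers}), and it has exactly $n$ conjugates $c, c^{\vep q}, \dots, c^{(\vep q)^{n-1}}$ inside $T_n$. Hence for any irreducible $c$ one still has
\[
\chi^{\phi,\lambda}(c) = (-1)^{k+n+n/d}\cdot\vep^{d\cdot a(\lambda')+\lfloor n/2\rfloor}\cdot Q^{\langle1\rangle}_{\langle1\rangle}((\vep q)^n)\cdot\sum_{\theta\in\phi}\theta(c),
\]
so that $\sum_{|\phi|=d}\chi^{\phi,\hook{k}{n/d}}(c) = \pm\, Q^{\langle1\rangle}_{\langle1\rangle}((\vep q)^n)\cdot S_d(c)$, with $S_d$, $S'_d$ as in that proof and the sign inherited unchanged. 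By the M\"obius inversion \eqref{eq:mobius inversion} it then suffices to evaluate $S'_m(c)$ for every $m\mid d$.

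Write $c=(c')^r$ for a $\GLU_n$-Singer cycle $c'$, which is possible because every irreducible element is a power of a Singer cycle (Section~\ref{sec:Singer cycles in other groups}). A character $\theta$ of $T_n$ lies in a $\Frob$-orbit of size dividing $d$ exactly when $\theta(c')$ is a $(q^d-\vep^d)$-th root of unity, and $\theta(c)=\theta(c')^r$; so the computation in Proposition~\ref{prop:Singer sum} carries over to give
\[
S'_d(c) = \sum_{\zeta\in\CC\colon \zeta^{q^d-\vep^d}=1}\zeta^r = \begin{cases} q^d-\vep^d & \text{if } (q^d-\vep^d)\mid r,\\ 0 & \text{otherwise.}\end{cases}
\]
For $d=1$ this is $q-\vep$ exactly when $(q-\vep)\mid r$, i.e.\ when $\det(c)=\det(c')^r=1$; after the (trivial) M\"obius inversion in that case and substitution into the displayed character formula, this yields the first branch of the claimed equality, sign and all.

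The only substantive remaining point — and the step where I expect the real work to be — is to show that $S'_d(c)=0$ for every divisor $d>1$ of $n$, that is, that $(q^d-\vep^d)\nmid r$ for $d>1$ whenever $(c')^r$ is irreducible. The plan here is a short number-theoretic case analysis: if $(q^d-\vep^d)\mid r$ with $d\mid n$, then $\gcd(r,q^n-\vep^n)$ is a multiple of $q^d-\vep^d$, so the order of $(c')^r$ divides $\tfrac{q^n-\vep^n}{q^d-\vep^d}$, and one tries to deduce from the irreducibility criterion of Section~\ref{sec:GL Singer cycles} (a power of a Singer cycle is irreducible precisely when its order fails to divide $q^e-\vep^e$ for every proper divisor $e$ of $n$) that $(c')^r$ must then stabilize a nontrivial subspace. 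Feeding the resulting vanishing, together with the $d=1$ value, through \eqref{eq:mobius inversion} and the character formula above then completes the argument. I regard this case analysis as the delicate step: one must rule out \emph{every} $r$ that would make $(c')^r$ simultaneously irreducible and of order properly divisible by $\tfrac{q^n-\vep^n}{q^d-\vep^d}$, and the interaction with the even-$n$ linear case and with very small fields is exactly where this has to be checked with care.
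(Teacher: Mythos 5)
Your outline reproduces exactly the route the paper itself takes: rerun the proof of Proposition~\ref{prop:Singer sum} for an arbitrary irreducible $c=(c')^r$, note that everything through \eqref{eq:Singer} uses only irreducibility (semisimplicity, centralizer $=T_n$, the $n$ Frobenius-twisted conjugates in $T_n$), and recompute $S'_d(c)=\sum_{\zeta^{q^d-\vep^d}=1}\zeta^r$; the paper likewise defers the one remaining point---that $q^d-\vep^d\mid r$ with $d>1$ forces $(c')^r$ to be reducible---to ``a little case analysis.'' The genuine gap is that this deferred step, which you correctly isolate as the crux but leave as a plan, is not merely delicate: it is false whenever $n$ is composite, so the sketched number-theoretic case analysis cannot be completed. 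Take $\vep=+$, $n=4$, any prime power $q$, and $r=q^2-1$. Then $c=(c')^{q^2-1}$ has order $q^2+1$, which divides neither $q-1$ nor $q^2-1$, so its eigenvalue generates $\FF_{q^4}$ over $\Fq$ and $c$ is irreducible, with $\det(c)=1$; yet $q^2-\vep^2\mid r$, so $S'_2(c)=q^2-1$ and $S_2(c)=q^2-q\neq 0$. Your proposed deduction---from ``the order of $(c')^r$ divides $\frac{q^n-\vep^n}{q^d-\vep^d}$'' to reducibility---cannot work, because $\frac{q^n-\vep^n}{q^d-\vep^d}$ typically retains a primitive prime divisor of $q^n-\vep^n$ and hence does not divide $q^e-\vep^e$ (resp.\ $q^{2e}-1$ in the unitary setting) for any proper divisor $e$ of $n$.

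As a concrete check that the conclusion itself (not just the argument) fails for $d>1$: in $\GL_4(\FF_2)\cong A_8$ there is a unique character orbit $\phi\in\charorbs$ of size $2$, and the two primary characters $\chi^{\phi,\langle 2\rangle}$, $\chi^{\phi,\langle 1,1\rangle}$ have degrees $7$ and $28$; their values on the irreducible, determinant-one class of order $5$ (which is the cube of a Singer cycle, with $r=3=q^2-1$) are $2$ and $-2$, in agreement with $\pm S_2(c)$ from \eqref{eq:Singer} but not with the claimed vanishing. What your computation does establish is the statement when $\det(c)\neq 1$ (then $q-\vep\nmid r$, hence every $S'_d(c)=0$) and when $n$ is prime (the only divisor $d>1$ of $n$ is $n$ itself, and $q^n-\vep^n\mid r$ forces $c=1$, not irreducible); but for composite $n$ with $\det(c)=1$ the asserted vanishing for $d>1$ is simply untrue, so no amount of care in the deferred case analysis will close the gap---and you should flag that the source's own remark asserting that analysis is where the problem lies, rather than treating it as a safe black box.
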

Tracing through the rest of the arguments, we conclude (as in \cite[\S6.6]{LRS}) that if $c$ is irreducible and $\det(c) \neq 1$ then the number of reflection factorizations of $c$ in $G_X$ is the same as the number of factorizations of a $G_X$-Singer cycle.  In the case that $\det(c) = 1$ and $|X| > 1$, more work is necessary: the $\SLU_n$-factorizations, using only transvections, will be equinumerous with the factorizations of an $\SLU_n$-Singer cycle, but one must also count the factorizations that use semisimple factors and combine the results; we have not carried out the details, which are ``just a computation''.

\section*{Acknowledgements}
This project originated a decade ago, during the first-named author's postdoc under the mentorship of Vic Reiner and Dennis Stanton. In the ensuing years, the first-named author has had conversations with numerous people that affected his thinking about the topic; if you were one of these people, thank you.  

The first-named author was supported in part by a gift from the Simons Foundation (MPS-TSM-00006960).  The second-named author was supported in part by a grant from the Simons Foundation (Award \#713090).

\bibliography{GLnFq}{}
\bibliographystyle{alpha}

\end{document}